\newcommand{\xupref}[2]{\hspace{-0.3ex}\stackrel{\eqref{#1}}{#2}} 
\newcommand{\upupref}[3]{\hspace{-3ex}\stackrel{\eqref{#1},\eqref{#2}}{#3}}
\theoremstyle{plain}
\newtheorem{theorem}{Theorem}[section]
\newtheorem{lemma}[theorem]{Lemma}
\newtheorem{proposition}[theorem]{Proposition}
\newtheorem{corollary}[theorem]{Corollary}
\theoremstyle{definition}
\newtheorem{definition}[theorem]{Definition}
\theoremstyle{remark}
\newtheorem{remark}[theorem]{Remark}
\numberwithin{equation}{section}
\newcommand{\e}{\varepsilon}
\newcommand{\vphi}{\varphi}
\newcommand{\N}{\mathbb N}
\newcommand{\Z}{\mathbb Z}
\newcommand{\R}{\mathbb R}
\newcommand{\de}{\,\mathrm{d}}
\newcommand{\supp}{{\rm supp\,}}
\renewcommand{\setminus}{\backslash}
\newcommand{\meas}{\mathcal{M}^+([0,\infty))}
\newcommand{\inte}{\int_{[0,\infty)}}
\newcommand{\lambar}{\bar{\lambda}_\rho}
\newcommand{\Cbar}{\bar{C}}
\newcommand{\phibar}{\bar{\vphi}}
\title[Self-similar solutions to coagulation equations with time-dependent tails]{Self-similar solutions to coagulation equations with time-dependent tails: the case of homogeneity one}
\author{Marco Bonacini \and Barbara Niethammer \and Juan J. L.\ Vel\'{a}zquez}
\date{\today}
\address{Institut f\"ur Angewandte Mathematik, Universit\"at Bonn, Endenicher Allee 60, 53115 Bonn, Germany}
\email{bonacini@iam.uni-bonn.de; niethammer@iam.uni-bonn.de; velazquez@iam.uni-bonn.de}
\keywords{Smoluchowski's equation, kernels with homogeneity one, self-similar solutions}
\begin{document}

\begin{abstract}
	We prove the existence of a one-parameter family of self-similar solutions with time dependent tails for Smoluchowski's coagulation equation, for a class of kernels $K(x,y)$ which are homogeneous of degree one and satisfy $K(x,1)\to k_0>0$ as $x\to 0$. In particular, we establish the existence of a critical $\rho_*>0$ with the property that for all $\rho\in(0,\rho_*)$ there is a positive and differentiable self-similar solution with finite mass $M$ and decay $A(t)x^{-(2+\rho)}$ as $x\to\infty$, with $A(t)=e^{M(1+\rho)t}$. Furthermore, we show that (weak) self-similar solutions in the class of positive measures cannot exist for large values of the parameter $\rho$.
\end{abstract}

\maketitle


\section{Introduction} \label{sect:intro}

Smoluchowski's coagulation equation \cite{Smo27} is a classical mean-field model to describe irreversible aggregation of clusters through binary collisions. The evolution of the number density $f(\xi,t)$ of clusters with mass $\xi$ at time $t$ is governed by the equation
\begin{equation} \label{smol}
\partial_t f(\xi,t) = \frac12\int_0^\xi K(\xi-\eta,\eta) f(\xi-\eta,t)f(\eta,t)\de \eta - f(\xi,t)\int_0^\infty K(\xi,\eta)f(\eta,t)\de \eta\,,
\end{equation}
where the kernel $K(\xi,\eta)$ prescribes the rate at which clusters of size $\xi$ and $\eta$ coagulate. The first term on the right-hand side of \eqref{smol} describes the formation of particles of size $\xi$ due to the merging of two particles of mass $\eta$ and $\xi-\eta$ respectively, while the second term takes into account that particles of size $\xi$ can combine with particles of any other size.

For homogeneous kernels, a central question in the qualitative analysis of \eqref{smol} is the so-called \emph{scaling hypothesis}, which predicts that the long-time behaviour of solutions to \eqref{smol} is universal and captured by \emph{self-similar solutions}. This issue is by now well understood for the solvable kernels \cite{CMM10,MP04}: the constant kernel $K(\xi,\eta)=2$, the additive kernel $K(\xi,\eta)=\xi+\eta$, and the multiplicative kernel $K(\xi,\eta)=\xi\eta$; for such kernels, solutions can be computed explicitly via Laplace transform. Rigorous existence and regularity results for self-similar solutions, both with finite mass and with fat tails respectively, have also been established for nonsolvable kernels with homogeneity strictly smaller than 1, see \cite{CanMisch11,EM06,EMR05,FL05,FL06,NTV16,NV13}. In all these results the  additional assumption $K(\xi,1)=O(\xi^{-a})$ as $\xi\to0$, with $a<1-\gamma$, is needed. One of the major open problems that remains is the uniqueness of self-similar profiles with given decay behaviour. Only recently some rigorous results have been obtained for particular cases in \cite{Lau18,LNV18,NTV16b,Thr}.

It is the purpose of this paper to investigate the existence of self-similar solutions to \eqref{smol} in the case of homogeneous kernels of degree one, for which, apart from the solvable additive kernel, no rigorous results have been obtained so far. It is worth to notice that such class of kernels represents a borderline case: indeed, it is well known that the total mass is conserved along the evolution for kernels with homogeneity $\gamma\leq1$, while if $K$ grows too fast at infinity, for instance if $K$ is homogeneous of degree strictly larger than one, solutions exhibit the phenomenon of \emph{gelation}, that is, roughly speaking, infinite large clusters are created at finite time and the total mass decreases.

In the case of homogeneity one, as already noticed in \cite{vDE88}, the picture is different depending on the behaviour of $K(\xi,1)$ as $\xi\to0$. In particular, we shall distinguish between class-I kernels, for which $K(\xi,1)\to0$ as $\xi\to0$, and class-II kernels, for which $K(\xi,1)\to k_0>0$ as $\xi\to0$. In this paper we will only deal with kernels of class-II; one example of such kernels is $K(\xi,\eta)=(\xi^\frac{1}{3}+\eta^{\frac13})^3$, which has been formally derived for particles moving in a shear flow (see also \cite{NNTV,NV} for the derivation and analysis of the corresponding linear version of the equation). We instead refer to \cite{HNV16} for a discussion with numerical simulations of the long-time behaviour of solutions to \eqref{smol} for class-I kernels.

The prototype of class-II kernels is of course the additive kernel $K(\xi,\eta)=\xi+\eta$, for which it has been proved in \cite{MP04} that there exists a one-parameter family of self-similar solutions with finite mass, one of which having exponential decay at infinity and finite second moment, the others decaying like power laws and being characterized by the different divergence behaviours of the second moment. It has been conjectured in \cite{HNV16} that a similar result should hold also for nonsolvable class-II kernels, and it is the purpose of this paper to provide a rigorous answer to this question.

In particular, we will establish in Theorem~\ref{thm:exist} the existence of a family of nonnegative self-similar solutions with finite mass, depending on a positive parameter $\rho$ smaller than a critical value $\rho_*>0$. Such solutions are characterized in terms of their asymptotic decay. A remarkable novelty with respect to previous results is that in the present case self-similar solutions exhibit \emph{time-dependent tails}, see \eqref{time-dep} below; such behaviour is truly different from that of the fat-tail solutions obtained so far for kernels with homogeneity $\gamma<1$, as will be explained below. In a second paper \cite{BNV} we show that such  self-similar solutions with  tail-dependent tails also exist for a class of kernels that are homogeneous of degree $\gamma\in(-\infty,1)$ when $K(\xi,1)\sim \xi^{\gamma-1}$ as $\xi\to0$. The analysis relies on the same methods introduced here, but is significantly more involved due to the presence of a sharp variation of the solution in a small transition layer, which poses additional technical challenges.

Finally, in our second main result we show that (weak) self-similar solutions cannot exist for large values of the parameter $\rho$ (see Theorem~\ref{thm:nonexist}).

\subsection*{Self-similar solutions}
In what follows, it is convenient to reformulate \eqref{smol} in a conservative form:
\begin{equation}\label{smol2}
\partial_t(\xi f(\xi,t)) = - \partial_\xi \biggl( \int_0^\xi\int_{\xi-\eta}^{\infty} K(\eta,\zeta)\eta f(\eta,t)f(\zeta,t)\de \zeta \de \eta \biggr) =: - \partial_\xi \bigl(J[f](\xi,t)\bigr)\,.
\end{equation}
For kernels homogeneous of degree one, the mass density function of a self-similar solution to \eqref{smol2} has the form
\begin{equation} \label{selfsim1}
\xi f(\xi,t) = e^{-bt} g(\xi e^{-bt}), \qquad b>0,
\end{equation}
where the self-similar profile $g$ solves
\begin{equation} \label{selfsim3}
b\partial_x(xg(x)) = \partial_x \biggl( \int_0^x \int_{x-y}^\infty \frac{K(y,z)}{z}g(y)g(z)\de z \de y \biggr)\,.
\end{equation}
Integrating in $x$, we then look for nonnegative solutions $g$ to
\begin{equation} \label{selfsim4}
b x g(x) = \int_0^x\int_{x-y}^\infty \frac{K(y,z)}{z}g(y)g(z)\de z \de y\,,
\end{equation}
with finite mass
\begin{equation}\label{mass}
\int_0^\infty g(x)\de x = M\,.
\end{equation}
Formal asymptotics suggest (see Section~\ref{sect:euristics}) that, if solutions to \eqref{selfsim4} for a given $b>0$ exist, and if they have some power law behaviour $g(x)\sim\frac{1}{x^{1+\rho}}$ as $x\to\infty$, then the relation between the exponent $\rho$ and $b$ is
\begin{equation} \label{b}
b= M\biggl(\frac{1+\rho}{\rho}\biggr)\,.
\end{equation}
The behaviour of $g$ at the origin is in this case $g(x)\sim x^{-\frac{1}{1+\rho}}$.
There is hence a one-to-one correspondence between $b$ and the exponent $\rho$ characterizing the decay behaviour of the solutions.

As already remarked, for the class of kernels considered here self-similar solutions have time-dependent tails. Indeed,  assume to have a self-similar profile $g$ solving \eqref{selfsim4} which decays like $x^{-(1+\rho)}$ as $x\to\infty$. Then it follows from \eqref{selfsim1} that the number density $f$ of the corresponding self-similar solution has a time-dependent tail of the form
\begin{equation} \label{time-dep}
f(\xi,t) \sim \frac{A(t)}{\xi^{2+\rho}}, \qquad\text{with }A(t)=e^{M(1+\rho)t}\,.
\end{equation}
The time-dependence of the coefficient $A(t)$ is a remarkable property of the solutions constructed in this paper, in contrast with those obtained before in \cite{NTV16,NV13} for kernels satisfying the assumptions
\begin{equation} \label{gammasmall}
\gamma<1 \qquad\text{and}\qquad K(\xi,1)=O(\xi^{-a}) \quad\text{as }\xi\to0,\text{ with }a<1-\gamma
\end{equation}	
(where $\gamma$ is the homogeneity of the kernel). It is worth to comment briefly on the differences between these two classes of solutions. The main idea in the construction is to look for solutions which behave like
\begin{equation} \label{powerlaw}
f(\xi,t)\sim A(t)\xi^{-\alpha}
\end{equation}
for large values of $\xi$, for some $\alpha>1$. Denoting by $Q[f]$ the right-hand side in the equation \eqref{smol}, it turns out that under the assumptions \eqref{gammasmall}--\eqref{powerlaw} one has $Q[f]=O(\xi^{-\beta})$, for some $\beta>\alpha$. Then the coagulation equation \eqref{smol} becomes for large values
$$
\bigl(\partial_t A(t)\bigr)\xi^{-\alpha} = O(\xi^{-\beta})\,, \qquad \xi>>1,
$$
and, since $\beta>\alpha$, we obtain $\partial_tA(t)=0$. The tail might therefore be expected to be stationary. Notice in particular that the solutions with fat tails constructed in \cite{NTV16,NV13} solve at $\xi\to\infty$ approximately the equation $\partial_tf=0$. The reason why, among the possible behaviours of solutions to this equation, we restrict to power laws as in \eqref{powerlaw}, is that they are the only functions which can be written in the self-similar form $f(\xi,t)=\lambda(t)\Phi(\xi/\mu(t))$ with $\mu(t)\to\infty$ as $t\to\infty$.

We emphasize that the previous heuristic argument relies on the fact that, for kernels satisfying \eqref{gammasmall}, the coagulation term $Q[f]$ does not contribute to the asymptotics of the solution for large values of $\xi$. In the case considered in this paper, however, that is $\gamma=1$ and $K(0,1)=1$, there is a nontrivial contribution of the coagulation kernel that yields the time-dependent character of the tails highlighted in \eqref{time-dep}. Indeed, by considering power-law solutions as in \eqref{time-dep}, it is possible to approximate the term $J[f]$ on the right-hand side of \eqref{smol2}, namely
$$
J[f](\xi,t) \sim \Bigl(\frac{\rho+1}{\rho}\Bigr)A(t)M\xi^{-\rho} \qquad\text{as }\xi\to\infty
$$
(the details of this computation can be seen in Section~\ref{sect:euristics}).
Then \eqref{smol2} becomes for large values of $\xi$
\begin{equation}
\bigl(\partial_tA(t)\bigr) \xi^{-(1+\rho)} - MA(t)(\rho+1) \xi^{-(1+\rho)} = 0\,,
\end{equation}
whence the second formula in \eqref{time-dep} follows. A similar argument indicates that it is possible to have similar solutions with time-dependent tails also in the case of homogeneity $\gamma<1$ if the kernel behaves as $K(\xi,1)\sim \xi^{\gamma-1}$ as $\xi\to0$, and we prove this rigorously in \cite{BNV}.

\subsection*{Main results}
In order to state rigorously our results, we now formulate the precise assumptions on the rate kernel: $K$ is a continuous, nonnegative and symmetric map
\begin{equation} \label{kernel1}
K\in C([0,\infty)\times[0,\infty)), \qquad
K(x,y) = K(y,x) \geq0 \quad\text{for all }x,y\in[0,\infty),
\end{equation}
homogeneous of degree one
\begin{equation} \label{kernel2}
K(a x,a y) = a K(x,y) \qquad\text{for all }x,y\in(0,\infty),\, a>0,
\end{equation}
and such that for some constants $K_0>0$ and $\alpha >0$
\begin{equation} \label{kernel3}
|K(x,1)-1| \leq K_0 x^\alpha
\qquad\text{for every }x\in[0,1].
\end{equation}
In our first result we establish the existence of a one-parameter family of nonnegative self-similar solutions with finite mass to \eqref{selfsim4}, and we explicitly determine their asymptotic decay, for the class of kernels satisfying the previous assumptions.

\begin{theorem} \label{thm:exist}
Assume that the kernel $K$ satisfies assumptions \eqref{kernel1}, \eqref{kernel2} and \eqref{kernel3}.
Then there exists $\rho_*>0$ such that for every $\rho\in(0,\rho_*)$ and for $b=\frac{1+\rho}{\rho}$ there is a positive solution $g\in C^1((0,\infty))$ to \eqref{selfsim4} with unit mass and satisfying
\begin{equation} \label{asymp}
g(x) \sim x^{-\frac{1}{1+\rho}}\quad\text{as }x\to0, \qquad
g(x) \sim \frac{1}{x^{1+\rho}} \quad\text{as }x\to\infty.
\end{equation}
\end{theorem}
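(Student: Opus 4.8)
The plan is to construct the self-similar profile $g$ as a fixed point of a suitable integral operator, working with the rescaled version of equation \eqref{selfsim4} normalized so that the prescribed tail behaviour is built in from the start. Concretely, since we expect $g(x)\sim x^{-(1+\rho)}$ at infinity and $g(x)\sim x^{-1/(1+\rho)}$ at the origin, I would write $g(x) = x^{-1/(1+\rho)}\phi(x)$ (or a similarly weighted ansatz) so that $\phi$ is bounded and bounded away from zero, and reformulate \eqref{selfsim4} as a nonlinear fixed-point equation $\phi = \mathcal{T}[\phi]$ on a closed convex subset of a weighted continuous-function space. The relation $b = (1+\rho)/\rho$ with unit mass $M=1$ is dictated by the formal asymptotics, so the operator $\mathcal{T}$ will implicitly encode the mass normalization (after constructing $g$ one rescales $x$ to fix $\int_0^\infty g = 1$, which is consistent because \eqref{selfsim4} is invariant under the scaling $g(x)\mapsto \lambda g(\lambda x)$ up to adjusting $b$).

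The key steps, in order, are: (1) derive from \eqref{selfsim4} the precise integral form of $\mathcal{T}$, splitting the double integral into a "near-diagonal" part (which governs the behaviour for $x$ in a bounded range) and "tail" contributions, and use assumption \eqref{kernel3} — i.e. $K(x,1)$ close to $1$ near the origin with rate $\alpha$ — to show that the dominant balance near $x=0$ forces exactly the exponent $-1/(1+\rho)$; (2) identify a closed convex set $\mathcal{X}\subset C((0,\infty))$ of functions $\phi$ with prescribed bounds $c\le\phi(x)\le C$ and a Hölder-type modulus of continuity, invariant under $\mathcal{T}$, which is where the smallness condition $\rho<\rho_*$ enters: the constants in the a priori bounds blow up as $\rho$ increases, reflecting the nonexistence result of Theorem~\ref{thm:nonexist}; (3) verify that $\mathcal{T}$ is continuous and compact on $\mathcal{X}$ (compactness coming from the gain of regularity in the integral operator plus decay estimates at $0$ and $\infty$), and apply Schauder's fixed-point theorem; (4) bootstrap regularity: once a continuous fixed point $g$ is known, differentiate \eqref{selfsim4} to get $g\in C^1((0,\infty))$, and sharpen the crude bounds $c\le\phi\le C$ to the genuine asymptotic equivalences in \eqref{asymp} by a careful analysis of the integral operator as $x\to 0$ and $x\to\infty$ (the tail $x^{-(1+\rho)}$ should come out of a linearized analysis of \eqref{selfsim4} near infinity, matching the formula \eqref{b}); (5) establish positivity of $g$ on all of $(0,\infty)$, which follows from the structure of \eqref{selfsim4} once $g$ is nonnegative and not identically zero.

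The main obstacle I expect is step (2): constructing an invariant region for $\mathcal{T}$ that simultaneously captures both the $x\to0$ and $x\to\infty$ asymptotics with the correct exponents. The difficulty is that the two ends of the profile are coupled through the double integral in \eqref{selfsim4} — the behaviour of $g$ near the origin feeds into the tail through the inner integral $\int_{x-y}^\infty$, and vice versa — so one cannot treat the two limits independently. Controlling this coupling, and in particular showing that the constants degrade only at a rate that still permits a nonempty range $(0,\rho_*)$, will require delicate weighted estimates and is presumably where the bulk of the technical work lies. A secondary subtlety is the behaviour at the origin: the integrable-but-unbounded singularity $x^{-1/(1+\rho)}$ means the inner integrals are only conditionally controlled, and one must be careful that assumption \eqref{kernel3} is strong enough to close the estimates — in particular the parameter $\alpha$ will interact with $\rho$ and may force $\rho_*$ to depend on $\alpha$.
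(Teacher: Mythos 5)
Your outline, as it stands, has genuine gaps at the two places where the paper's proof actually has to work. First, the proposed invariant region is internally inconsistent: if you factor $g(x)=x^{-1/(1+\rho)}\phi(x)$ and demand $c\le\phi\le C$ on all of $(0,\infty)$, then $g(x)\ge c\,x^{-1/(1+\rho)}$ for large $x$, which is incompatible with unit mass and with the tail $g(x)\sim x^{-(1+\rho)}$; so the set you propose cannot contain the solution you are trying to construct, and the whole difficulty — a weight that interpolates between the exponent $-\tfrac{1}{1+\rho}$ at $0$ and $-(1+\rho)$ at $\infty$ — is exactly what you defer to step (2) without resolving. This is not a technicality but the central point. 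The paper's mechanism is to pass to logarithmic variables ($\rho\lambda(x)=e^{x/\rho}g(e^{x/\rho})$), where the equation becomes a traveling-wave problem, and to linearize around the \emph{explicit} profile $\bar\lambda_\rho(x)=\frac{1}{1+\rho}\frac{e^{x/(1+\rho)}}{(1+e^{x/(1+\rho)})^2}$ solving the model equation $(1+\rho)\bar\lambda_\rho=\int_{-\infty}^x\bar\lambda_\rho\int_x^\infty\bar\lambda_\rho$; the correction terms (the deviation $K(e^{(y-z)/\rho},1)-1$, concentrated in a strip of width $O(\rho)$ in these variables, and the corner region of integration, of size $O(\rho)$) are then provably small for small $\rho$, and Banach's fixed point theorem applies in a weighted space (Proposition~\ref{prop:fixedpoint}). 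Your appeal to Schauder does not supply this smallness, and your heuristic that "$\rho<\rho_*$ enters because constants blow up, reflecting Theorem~\ref{thm:nonexist}" misidentifies the actual role of $\rho$: smallness of $\rho$ is what makes the remainder a contraction-sized perturbation; it is not extracted from an a priori bound that degenerates.

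Second, your steps (4)–(5) pass over the hardest part of the endgame. Even in the paper, the fixed point can only be obtained in a space with \emph{non-optimal} decay $e^{-\beta x}$, $\beta<1$, at $+\infty$ (the natural rate $e^{-x}$ is too strong for the contraction estimates), and the perturbation there can dominate $\bar\lambda_\rho$ for large $x$; hence neither positivity for large $x$ nor the tail $x^{-(1+\rho)}$ is available from the fixed point itself. The paper recovers both through a separate bootstrap: uniform difference-quotient bounds (Lemma~\ref{lem:diffquot}) give $\lambda\in C^1$, a continuation argument gives the key estimate $|\lambda'(x)|\le C\lambda(x)$ (Lemma~\ref{lem:continuation}), which simultaneously yields strict positivity and the control needed to show that the equation reduces, up to an error of order $e^{-\frac32\beta x}$ with $\tfrac32\beta>1$, to $\lambda=\int_{-\infty}^x\lambda\int_x^\infty\lambda+\omega$, whence $\lambda(x)\sim e^{-x}$. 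Your step (5), "positivity follows from the structure of \eqref{selfsim4} once $g$ is nonnegative and not identically zero," begs the question: nonnegativity is precisely what your construction does not deliver once the invariant set is corrected to encode the true decay, and your "linearized analysis near infinity" in step (4) names the desired conclusion rather than a mechanism. To repair the proposal you would need, at minimum, the explicit approximate profile and the small-$\rho$ perturbation structure, plus a quantitative derivative (or Lipschitz-quotient) estimate of the type $|g'(x)|\le \frac{C}{x}g(x)$ to propagate positivity and identify the tail.
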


In the statement we have considered without loss of generality the case of unit mass $M=1$: it obviously follows that for every $M>0$ and $\rho\in(0,\rho_*)$ there is a solution $g$ satisfying \eqref{mass} and \eqref{asymp}, for $b$ as in \eqref{b}. Notice that this result resembles the situation for the additive kernel, for which a critical exponent  $\rho_{crit}=1$ exists, above which there exists no nonnegative solution;  our result does not give a critical value $\rho_{crit}$,  we however expect that $\rho_{crit}$ is in general not equal to one, but depends on the details of the kernel (see \cite{HNV16} for a justification of this conjecture).
In fact, we conjecture that there are cases where $\rho_{crit}=\infty$ as explained in Remark~\ref{rm:beta0} below.

The proof of the theorem is given in Section~\ref{sect:ex1} and Section~\ref{sect:ex2}. The strategy is based on a linearization around the explicit solution of an approximate equation, and on a fixed point argument. This provides the existence of a continuous solution satisfying the first condition in \eqref{asymp} (Proposition~\ref{prop:fixedpoint} and Corollary~\ref{cor:fixedpoint}); however, in order to apply the contraction mapping principle, we have to work in a space of functions with a non-optimal decay at infinity, and due to this reason we are not able to directly obtain also the second condition in \eqref{asymp}.
The exact asymptotic of the solution at infinity will be recovered in a second step, together with its strict positivity: to do so, we prove the differentiability of the solution and a decay estimate on its derivative in the form $|g'(x)|\leq\frac{C}{x}|g(x)|$ (Lemma~\ref{lem:continuation}), which allows us to obtain a good approximation of the equation and to complete the proof.

Since our method relies on a contraction principle argument, we also obtain uniqueness of the solution within the class of functions in which we apply the fixed point (see \ref{spacefixedpoint}). However, the space contains an artificial smallness condition, and a general uniqueness statement seems to require different techniques.

\medskip
Under an additional assumption on the kernel we complement Theorem~\ref{thm:exist} with a corresponding nonexistence result: more precisely, 
we replace \eqref{kernel3} by the condition
\begin{equation} \label{kernel4}
\lim_{\xi\to0} \frac{K(1,\xi) - 1}{\xi^\alpha} = \beta_0
\end{equation}
for some $\beta_0>0$ and $\alpha\in(0,1)$.
Furthermore, we assume strict positivity of the kernel:
\begin{equation} \label{kernel5}
K(1,\xi) \geq k_0 >0 \qquad\text{for all }\xi\geq0\,.
\end{equation}
Then we show that for sufficiently large $\rho$, given $M>0$ and $b$ satisfying \eqref{b}, equation \eqref{selfsim4} does not have any solution with mass $M$ in the class of positive Radon measures $\meas$ which
are not supported in the origin. By saying that a measure $g\in\meas$ is a solution of \eqref{selfsim4} we mean that for every test function $\theta\in C([0,\infty))$ with compact support one has
\begin{equation*}
b \inte x\theta(x)g(x)\de x
= \inte \de y \inte \de z \frac{K(y,z)}{z}  g(y)g(z) \int_y^{y+z}\theta(x)\de x\,.
\end{equation*}
Here and in the following, with some abuse of notation, we denote by $\int_A\theta(x)g(x)\de x$ the integral on $A\subset[0,\infty)$ of a function $\theta$ with respect to the measure $g$, also if $g$ is not absolutely continuous with respect to the Lebesgue measure. Notice that any multiple of the Dirac delta $\delta_0$ is a solution to \eqref{selfsim4} in the weak sense.

The precise result reads as follows.

\begin{theorem} \label{thm:nonexist}
Assume that the kernel $K$ satisfies assumptions \eqref{kernel1}, \eqref{kernel2}, \eqref{kernel4} and \eqref{kernel5}.
Then there exists $\rho_{**}\geq\rho_*$ such that for every $\rho>\rho_{**}$ and for $b=\frac{1+\rho}{\rho}$ the unique solution to \eqref{selfsim4} in $\meas$ with unit mass is the Dirac delta $\delta_0$.
\end{theorem}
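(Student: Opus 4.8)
The proof is by contradiction: assume that for some $b>1$ there is $g\in\meas$ with unit mass, $g\neq\delta_0$, solving \eqref{selfsim4} weakly; we will see that this forces $b$ above a threshold $b_*>1$ depending only on $K$.

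\emph{Step 1: there is no atom at the origin, and $g$ is regular.} Write $g=a\delta_0+\mu$ with $a\in[0,1)$ and $\mu(\{0\})=0$, so that $\Phi(x):=\mu([0,x])\to0$ as $x\to0^+$. Inserting this splitting into the weak formulation, testing with $\theta\to\mathbf 1_{[0,x]}$ along admissible test functions (for $x$ a continuity point of $\Phi$), and using homogeneity together with $K(0,1)=1$ (which follows from \eqref{kernel3}), one obtains the exact identity
\begin{equation*}
(b-a)\int_{[0,x]}y\de\mu(y)=a\int_{[0,\infty)}\min\{z,x\}\de\mu(z)+\iint\frac{K(y,z)}{z}\min\{z,(x-y)_+\}\de\mu(y)\de\mu(z).
\end{equation*}
Bounding the left-hand side by $(b-a)x\,\Phi(x)$, the first term on the right below by $ax\bigl(1-a-\Phi(x)\bigr)$, and the last (nonnegative) term by $0$, one gets $b\,\Phi(x)\ge a(1-a)$; letting $x\to0$ forces $a=0$. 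A bootstrap in the weak equation (as in the regularity arguments of Section~\ref{sect:ex1}--\ref{sect:ex2}) then shows that $g\in C^1((0,\infty))$ is positive with $g(x)\sim c_0 x^{-1/(1+\rho)}$ as $x\to0$, where $\rho:=\tfrac1{b-1}$.

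\emph{Step 2: moment bounds and identities.} For $b$ close enough to $1$, i.e.\ $\rho$ large (in particular $b<\tfrac32$), I would next show that $M_2:=\int_0^\infty x^2g\de x<\infty$: a power-law tail $g(x)\sim c\,x^{-(1+\rho')}$, propagated through \eqref{selfsim4} and matched as in Section~\ref{sect:euristics}, necessarily has $\rho'=\tfrac1{b-1}>2$, hence $M_2<\infty$; the rigorous version propagates decay starting from the integrability of $g$ at infinity. Granting $M_2<\infty$ (whence $M_1:=\int_0^\infty xg\de x<\infty$ and $M_0=1$), one tests the weak formulation with truncations of $\theta\equiv1$ and of $\theta(x)=x$; passing to the limit and symmetrising in $(y,z)$ gives
\begin{equation*}
bM_1=\iint K(y,z)g(y)g(z)\de y\de z,\qquad bM_2=\tfrac34\iint K(y,z)(y+z)g(y)g(z)\de y\de z.
\end{equation*}

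\emph{Step 3: the contradiction.} Write $K(y,z)=(y+z)+R(y,z)$ with $R(y,z)=(y+z)\bigl[\widetilde K\bigl(\tfrac{y}{y+z}\bigr)-1\bigr]$ and $\widetilde K(t):=K(t,1-t)$, so $\widetilde K(0)=\widetilde K(1)=1$. Using $\iint(y+z)gg=2M_1$ and $\iint(y+z)^2gg=2M_2+2M_1^2$, the identities become $(b-2)M_1=\iint Rgg$ and $(b-\tfrac32)M_2=\tfrac32M_1^2+\tfrac34\iint R(y+z)gg$. If $\widetilde K\ge1$ (e.g.\ for the shear kernel $(\xi^{1/3}+\eta^{1/3})^3$) then $\iint Rgg\ge0$ and the first identity already gives $b\ge2$. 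In general, \eqref{kernel4} yields $\widetilde K(t)-1\ge0$ for $t$ near $0$ and $1$, with $\widetilde K(t)-1\gtrsim\beta_0\min\{t,1-t\}^\alpha$, so the contribution of $R$ from this endpoint region to $\iint R(y+z)gg$ is $\ge c_1(K)M_1^2$ with $c_1(K)>0$ (using the Cauchy--Schwarz inequality $M_\alpha M_{2-\alpha}\ge M_1^2$ together with the near-origin asymptotics $g(x)\sim c_0x^{-1/(1+\rho)}$, which guarantee that this region carries a definite share of the integral), while on the complement $\widetilde K\ge k_0/2>0$ by \eqref{kernel5} and $y\simeq z$, so the (possibly negative) remaining contribution is $\ge-c_2(K)M_1^2$. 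Taking $b_*>1$ small enough that, for $b<b_*$, the gain beats the loss in the second identity, one concludes $(b-\tfrac32)M_2\ge0$, i.e.\ $b\ge\tfrac32$ — a contradiction with $b<b_*\le\tfrac32$. Hence for $b\in(1,b_*)$ the only unit-mass solution is $\delta_0$.

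The main obstacle is exactly the region where $y$ and $z$ are comparable: the weak formulation provides information only through compactly supported test functions, so establishing $M_2<\infty$ and the sharp decay at infinity requires a careful bootstrap, and controlling the part of $K$ away from the origin — where $\widetilde K$ may dip below the additive value $1$ — is what keeps $b_*$ below the conjectured optimal value $\tfrac{1+\rho_*}{\rho_*}$.
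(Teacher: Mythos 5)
Your Steps 1--2 contain unproved assertions (the $C^1$ regularity and the precise asymptotics $g(x)\sim c_0x^{-1/(1+\rho)}$ at the origin for an \emph{arbitrary} measure solution are not obtained by any bootstrap in the existence sections, which construct particular solutions in the opposite regime of small $\rho$; and the finiteness of $M_2$ is only argued by matching heuristic power-law tails, though a rigorous tail-decay statement of this type can indeed be proved for $b$ close to $1$). The moment identities themselves are correct: testing with $\theta\equiv1$ and $\theta(x)=x$ does give $bM_1=\iint Kgg$ and $bM_2=\tfrac34\iint K(y+z)gg$, hence $(b-2)M_1=\iint Rgg$ and $(b-\tfrac32)M_2=\tfrac32M_1^2+\tfrac34\iint R(y+z)gg$.

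The genuine gap is in Step 3, where the contradiction is supposed to come from. To conclude you need $\iint R(y+z)gg\ge-2M_1^2$. Splitting as you do, the loss from the region where $y$ and $z$ are comparable is bounded below by $-c_2(K)M_1^2$ with $c_2(K)\sim\delta^{-2}$, where $\delta$ is the (kernel-dependent) size of the endpoint region on which \eqref{kernel4} forces $\widetilde K\ge1$; this constant can be arbitrarily large. The gain from the endpoint region is at most of order $\beta_0\iint_{\{y\le\delta z\}}y^\alpha z^{2-\alpha}gg$, and Cauchy--Schwarz only gives $M_\alpha M_{2-\alpha}\ge M_1^2$, which is far too weak: after removing the part of $M_\alpha M_{2-\alpha}$ coming from $\{y>\delta z\}$ (itself of size up to $\delta^{\alpha-1}M_1^2$) nothing positive remains unless $M_\alpha M_{2-\alpha}\ge C(K)M_1^2$ for a large kernel-dependent $C(K)$. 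Crucially, none of the constants $c_1,c_2,\tfrac32$ depends on $b$, so ``taking $b_*$ close to $1$'' does not help unless you prove a quantitative spreading statement for solutions as $b\to1$ (e.g.\ $M_\alpha M_{2-\alpha}/M_1^2\to\infty$), which is neither stated nor proved and does not follow from the unproved near-origin asymptotics. As written, the argument yields only conditional statements such as: if $\widetilde K\ge1$ pointwise and $M_1<\infty$ then $b\ge2$; it does not exclude solutions for a general kernel satisfying \eqref{kernel4}--\eqref{kernel5} and $b$ near $1$. The paper's proof avoids this obstruction entirely: it first establishes arbitrarily fast power-law tail decay and $L^1$ regularity for measure solutions with $b$ near $1$, and then runs a duality argument with an explicit subsolution of the adjoint transport equation, in which the factor $b-1$ multiplies an explicit error term $\omega(R_0)$, so that letting $R_0\to\infty$ and $b\to1$ produces the contradiction without any comparison of kernel-dependent constants of the type your Step 3 requires.
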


The proof of the theorem, which is given in Section~\ref{sect:nonex}, is achieved through a contradiction argument which mainly relies on a duality formula. The idea starts with the observation that, if $g$ is a measure solution to \eqref{selfsim4}, then it is a weak stationary solution to the corresponding evolution equation
\begin{equation*}
	\partial_t g + b\partial_x(xg) - \partial_x \Bigl( \int_{0}^{x}\int_{x-y}^\infty \frac{K(y,z)}{z}g(y)g(z)\de z \de y \Biggr)=0\,.
\end{equation*}
By formally testing this equation with a function $\vphi$ chosen as a solution to the dual problem
\begin{equation} \label{adjoint0}
	\partial_t \vphi(x,t) + bx\partial_x\vphi(x,t) - \inte \frac{K(x,z)}{z}g(z)[\vphi(x+z,t)-\vphi(x,t)]\de z = 0
\end{equation}
with $\vphi(x,0)=\chi_{(R_0,\infty)}(x)$, for $R_0>0$, one then obtains that
\begin{equation} \label{duality0}
	\inte \vphi(x,T)g(x)\de x = \inte\vphi(x,0)g(x)\de x = \int_{(R_0,\infty)} g(x)\de x\,.
\end{equation}
To understand the behaviour of solutions to \eqref{adjoint0}, we use \eqref{kernel4} and a Taylor expansion of $\vphi$ in the integral term, and we see that for $x$ large the approximate form of \eqref{adjoint0} is
\begin{equation} \label{duality00}
\partial_t\vphi - \beta_0M_\alpha x^{1-\alpha}\partial_x\vphi + (b-1)x\partial_x\vphi = C \partial^2_{xx}\vphi\,,
\end{equation}
where $M_\alpha=\inte x^\alpha g(x)\de x$ denotes the $\alpha$-moment of $g$.
Given $R_0>0$ one can then choose $b$ sufficiently close to 1 so that the effect of the transport terms on the left-hand side of \eqref{duality00} is to move most of the mass towards the origin: in this case $\vphi(x,T)$ would become uniformly positive for large times, giving a contradiction with \eqref{duality0} if $R_0$ is large enough. The picture is actually made more difficult by the presence of the diffusive term on the right-hand side, which one has to control. In fact, in order to avoid the development of a well-posedness and regularity theory for the adjoint equation \eqref{adjoint0}, we will actually construct an explicit subsolution to \eqref{adjoint0} (Lemma~\ref{lem:subsolution}) which will allow us to exploit the basic idea contained in the previous formal argument.

\begin{remark} \label{rm:beta0}
The assumption that $\beta_0>0$ in \eqref{kernel4} is crucial in our argument for the proof of Theorem~\ref{thm:nonexist}: this can be seen from the approximate form \eqref{duality00} of the adjoint equation, in which the sign of $\beta_0$ determines the sign of the main transport term. In fact, in the case $\beta_0<0$ we expect that solutions to \eqref{selfsim4} with finite mass might exist also for $b\to1$; in Section~\ref{sect:euristics} we compute the formal asymptotics of a solution in this case, see \eqref{eq:beta0}.
\end{remark}

\begin{remark}\label{rm:general}
Unfortunately, our Theorems only say something about sufficiently small values of $\rho$ (Theorem~\ref{thm:exist}), or sufficiently large values of $\rho$ (Theorem~\ref{thm:nonexist}) respectively. One might expect that there is a single critical $\rho_{crit} \in (0,\infty]$, such that for each $\rho<\rho_{crit}$ a self-similar solution with power law decay exists, for $\rho=\rho_{crit}$ a solution with exponential decay, and for larger $\rho$ (in case that $\rho_{crit}<\infty$) no nonnegative solutions. We expect, however, that a proof of a corresponding statement, if true at all, is in general  difficult and is not feasible with the methods developed in this paper.
\end{remark}


\section{Heuristics of asymptotic behaviour} \label{sect:euristics}

We present a heuristic argument for \eqref{b}, which gives a one-to-one correspondence between the coefficient $b$ in the self-similar equation \eqref{selfsim4} and the exponent $\rho$ describing the asymptotic decay behaviour of a self-similar solution. We assume here that $K$ is a general kernel, homogeneous of degree one, which satisfies $K(\xi,1)\to 1 $ as $\xi\to0$. For this argument, it is convenient to reformulate the equation \eqref{selfsim4} by means of the change of variables
\begin{equation*} \label{heuristic1}
G(X) = xg(x), \qquad x=e^X\,.
\end{equation*}
Then \eqref{selfsim4} becomes
\begin{equation} \label{heuristic2}
bG(X) = \int_{-\infty}^X\int_{X+\ln(1-e^{Y-X})}^\infty K(e^{Y-Z},1)G(Y)G(Z)\de Z \de Y\,,
\end{equation}
with the total mass
\begin{equation*}
\int_{-\infty}^\infty G(X)\de X =M\,.
\end{equation*}
Notice that, in these variables, self-similar solutions to \eqref{smol} correspond to traveling wave solutions.
We now assume that a self-similar profile exists and we make the ansatz $G(X)\sim e^{-\rho X}$ as $X\to\infty$, which corresponds to $g(x)\sim x^{-(1+\rho)}$ as $x\to\infty$. We formally compute the asymptotics of the integral on the right-hand side of \eqref{heuristic2} for $X\to\infty$: we can split the region of integration into two parts, namely
\begin{align*}
\int_{-\infty}^X\int_{X+\ln(1-e^{Y-X})}^\infty & K(e^{Y-Z},1)G(Y)G(Z)\de Z \de Y\,
= \int_{-\infty}^X \int_{X}^\infty K(e^{Y-Z},1)G(Y)G(Z)\de Z \de Y \\
& \qquad + \int_{-\infty}^X\int_{X+\ln(1-e^{Y-X})}^X K(e^{Y-Z},1)G(Y)G(Z)\de Z \de Y
=: (I) + (II)\,.
\end{align*}
By using the assumption $K(\xi,1)\sim 1$ for small $\xi$, we have for the first term
\begin{equation*}
(I) \sim \int_{-\infty}^X G(Y)\de Y \int_{X}^\infty G(Z)\de Z \sim \frac{M}{\rho}e^{-\rho X}
\end{equation*}
as $X\to\infty$. For the second term, we exchange the order of integration and we approximate $G(Y)\sim G(X)$:
\begin{align*}
(II) \sim G(X) \int_{-\infty}^X \de Z \, G(Z) \int_{X+\ln(1-e^{Z-X})}^X K(e^{Y-Z},1)\de Y \sim G(X) \int_{-\infty}^X G(Z)\de Z \sim M e^{-\rho X}
\end{align*}
as $X\to\infty$. Here we used the fact that
\begin{align*}
 \int_{X+\ln(1-e^{Z-X})}^X K(e^{Y-Z},1)\de Y = \int_{x-z}^x K\Bigl(\frac{y}{z},1\Bigr)\frac{1}{y}\de y = \frac{1}{z}\int_{x-z}^x K\Bigl(1,\frac{z}{y}\Bigr) \de y\sim 1\,.
\end{align*}
By \eqref{heuristic2} we then deduce that, if a solution $G$ with $G(X)\sim e^{-\rho X}$ as $X\to\infty$ exists, then $b$ and $\rho$ are related by \eqref{b}.

We can similarly compute the asymptotics at $X\to-\infty$: assume that $G(X)\sim e^{aX}$ as $X\to-\infty$ for some $a>0$ (which corresponds to $g(x)\sim x^{a-1}$ as $x\to0$ in the old variables). Then, by splitting the right-hand side of \eqref{heuristic2} as before, we obtain by similar arguments
\begin{equation*}
(I) \sim \int_{-\infty}^X G(Y)\de Y \int_{X}^\infty G(Z)\de Z \sim \frac{M}{a}e^{aX}
\end{equation*}
as $X\to-\infty$, while $(II)$ is in this case a higher order term. Hence we conclude that $a=\frac{M}{b}=\frac{\rho}{1+\rho}$, that is, $g(x)\sim x^{-\frac{1}{1+\rho}}$ as $x\to0$.

\medskip
Assume now that the kernel $K$ satisfies \eqref{kernel4} with a coefficient $\beta_0$ of opposite sign:
\begin{equation} \label{heuristic3}
K(1,\xi) \sim 1 - \beta_0\xi^\alpha \qquad\text{as }\xi\to 0
\end{equation}
for $\beta_0>0$ and $\alpha\in(0,1)$.
By computations similar to those above, we can describe the expected decay behaviour of a solution for $b=1$, supporting the belief that for kernels satisfying \eqref{heuristic3} solutions might exist also for $b$ close to 1 (see Remark~\ref{rm:beta0}). Hence fix $b=1$ and assume to have a solution $G$ to \eqref{heuristic2} with unit mass; we can write as before
\begin{equation} \label{heuristic4}
G(X) = (I) + (II)\,.
\end{equation}
By using \eqref{heuristic3} we have for $X\to\infty$
\begin{align} \label{heuristic5}
(I) &\sim \int_{-\infty}^X G(Y)\de Y \int_X^\infty G(Z)\de Z - \beta_0\int_{-\infty}^X e^{\alpha Y}G(Y)\de Y \int_X^\infty e^{-\alpha Z}G(Z)\de Z \nonumber\\
& \sim \int_{X}^\infty G(Z)\de Z - \beta_0 M_\alpha\int_X^\infty e^{-\alpha Z}G(Z)\de Z\,,
\end{align}
where $M_\alpha = \int_{-\infty}^\infty e^{\alpha Y}G(Y)\de Y$. Observe that, by changing variables and using the homogeneity of the kernel, together with \eqref{heuristic3}, we have
\begin{align*}
\int_{X+\ln(1-e^{Z-X})}^X K(e^{Y-Z},1)\de Y
& = \frac{1}{z}\int_{x-z}^x K\Bigl( 1,\frac{z}{y} \Bigr) \de y
\sim 1 - \frac{\beta_0}{z^{1-\alpha}}\int_{x-z}^x y^{-\alpha}\de y \\
& \sim 1 - \beta_0 z^\alpha x^{-\alpha} = 1 -\beta_0 e^{\alpha(Z-X)}\,.
\end{align*}
By using this expression, we can approximate the integral in the second region by exchanging the order of integration and approximating $G(Y)\sim G(X)$:
\begin{align} \label{heuristic6}
(II) &\sim G(X)\int_{-\infty}^X \de Z\, G(Z) \int_{X+\ln(1-e^{Z-X})}^X K(e^{Y-Z},1)\de Y \nonumber \\
& \sim G(X)\int_{-\infty}^X G(Z)\de Z - \beta_0 e^{-\alpha X}G(X) \int_{-\infty}^X e^{\alpha Z}G(Z)\de Z \nonumber\\
& \sim G(X) - \beta_0 M_\alpha e^{-\alpha X}G(X)\,.
\end{align}
Collecting \eqref{heuristic4}--\eqref{heuristic6} we have
\begin{align*}
G(X) \sim \int_X^\infty G(Z)\de Z - \beta_0 M_\alpha \int_X^\infty e^{-\alpha Z}G(Z)\de Z + G(X) - \beta_0 M_\alpha e^{-\alpha X}G(X)\,.
\end{align*}
Differentiating,
\begin{align*}
G(X) \sim \beta_0 M_\alpha e^{-\alpha X}G(X) - \beta_0 M_\alpha \frac{\de}{\de X}\Bigl(e^{-\alpha X}G(X)\Bigr)\,.
\end{align*}
Notice that the first term on the right-hand side is a higher order term as $X\to\infty$, and we can neglect it; we then obtain the decay behaviour
\begin{equation*}
e^{-\alpha X}G(X) \sim e^{-\frac{1}{\beta_0\alpha M_\alpha}e^{\alpha X}}\,.
\end{equation*}
In the original variables, this corresponds to
\begin{equation} \label{eq:beta0}
g(x)\sim x^{\alpha-1}e^{-\frac{1}{\beta_0\alpha M_\alpha}x^\alpha} \qquad\text{as }x\to\infty.
\end{equation}


\section{Existence of self-similar solutions via fixed point} \label{sect:ex1}

In this and in the following section we give the proof of Theorem~\ref{thm:exist}. We henceforth assume that the kernel $K$ satisfies conditions \eqref{kernel1}--\eqref{kernel3}. We will also always assume without loss of generality that $\rho<1$.
It is convenient to reformulate the problem in a new set of variables: for $g$ solving \eqref{selfsim4}, we define
$$
\rho \lambda(x) := e^{\frac{x}{\rho}} g(e^{\frac{x}{\rho}}),
\qquad x\in(-\infty,\infty).
$$
Then equation \eqref{selfsim4} becomes
\begin{equation} \label{fixedpoint}
(1+\rho) \lambda(x) = \int_{-\infty}^x \int_{x+\rho\ln(1-e^\frac{y-x}{\rho})}^\infty K(e^\frac{y-z}{\rho},1)\lambda(y)\lambda(z)\de z \de y \,.
\end{equation}
Notice that the change of variables is mass-preserving. The goal is hence to show, for $\rho$ sufficiently small, the existence of a nonnegative solution $\lambda$ to \eqref{fixedpoint} with unit mass and asymptotic decay
\begin{equation} \label{asymp2}
\lambda(x)\sim e^\frac{x}{1+\rho} \quad\text{as }x\to-\infty,
\qquad
\lambda(x)\sim e^{-x} \quad\text{as }x\to\infty.
\end{equation}

The strategy to construct a solution with the required properties mainly relies on a fixed point argument. We first observe that equation \eqref{fixedpoint} can be reformulated as
\begin{align} \label{fixedpoint2}
(1+\rho)\lambda(x)
& = \int_{-\infty}^x \lambda(y)\de y \int_x^\infty \lambda(z)\de z
+ \int_{-\infty}^x \int_x^\infty \Bigl[K(e^\frac{y-z}{\rho},1)-1\Bigr] \lambda(y)\lambda(z)\de z \de y \nonumber\\
& \qquad + \int_{-\infty}^x\int_{x+\rho\ln(1-e^\frac{y-x}{\rho})}^{x} K(e^\frac{y-z}{\rho},1)\lambda(y)\lambda(z)\de z \de y\,.
\end{align}
The idea is to regard the last two integrals in \eqref{fixedpoint2} as remainder terms, and hence to look for a solution to \eqref{fixedpoint2} in the form $\lambda=\lambar+\psi$, that is as a perturbation of the function
\begin{equation} \label{lambdabar}
\lambar(x):= \frac{1}{1+\rho} \frac{e^\frac{x}{1+\rho}}{(1+e^\frac{x}{1+\rho})^2}
\end{equation}
which is the explicit solution to
\begin{equation*}
(1+\rho)\lambar(x) = \int_{-\infty}^x\lambar(y)\de y \int_x^\infty \lambar(z)\de z\,,
\qquad\text{with }\int_{-\infty}^{\infty}\lambar(x)\de x =1\,.
\end{equation*}
By subtracting the equations for $\lambda$ and $\lambar$, we see that the unknown function $\psi$ should solve
\begin{equation} \label{eqtnpsi}
(1+\rho)\psi(x) = \int_{-\infty}^x\int_x^\infty \Bigl( \lambar(y)\psi(z) + \psi(y)\lambar(z) \Bigr) \de z \de y + \mathcal{R}_\rho[\psi](x)\,,
\end{equation}
where the remainder term $\mathcal{R}_\rho[\psi]$ is given by
\begin{align} \label{remainder}
\mathcal{R}_\rho[\psi](x) &:= \int_{-\infty}^x \psi(y)\de y \int_x^\infty \psi(z)\de z \nonumber\\
& \quad + \int_{-\infty}^x\int_x^\infty \Bigl[ K(e^\frac{y-z}{\rho},1)-1 \Bigr] \bigl(\lambar+\psi\bigr)(y) \bigl(\lambar+\psi\bigr)(z)\de z \de y \nonumber\\
& \quad + \int_{-\infty}^x \int_{x+\rho\ln(1-e^\frac{y-x}{\rho})}^x K(e^\frac{y-z}{\rho},1) \bigl(\lambar+\psi\bigr)(y) \bigl(\lambar+\psi\bigr)(z)\de z \de y \nonumber\\
& =: R_{1,\rho}[\psi](x) + R_{2,\rho}[\psi](x) + R_{3,\rho}[\psi](x)\,.
\end{align}
Since $\int_{-\infty}^\infty\psi(x)\de x =0$, the integral function $\Psi(x):=\int_{-\infty}^x\psi(y)\de y$ then satisfies the equation
\begin{equation*}
(1+\rho)\Psi'(x) - \frac{(1-e^\frac{x}{1+\rho})}{(1+e^\frac{x}{1+\rho})}\,\Psi(x) = \mathcal{R}_\rho[\psi](x)
\end{equation*}
which has the solution
\begin{equation} \label{eqtnPsi}
\Psi(x) = \frac{\lambar(x)}{1+\rho}\int_0^x \frac{\mathcal{R}_\rho[\psi](y)}{\lambar(y)}\de y\,.
\end{equation}
By differentiating $\Psi$ we hence obtain that a solution $\psi$ to \eqref{eqtnpsi} is implicitly given by the fixed point problem $\psi = \mathcal{H}_\rho[\psi]$, where
\begin{equation} \label{mapfixedpoint}
\mathcal{H}_\rho[\psi](x) := \frac{\lambar(x)}{(1+\rho)^2} \frac{(1-e^\frac{x}{1+\rho})}{(1+e^\frac{x}{1+\rho})} \int_0^x \frac{\mathcal{R}_\rho[\psi](y)}{\lambar(y)}\de y + \frac{1}{(1+\rho)}\,\mathcal{R}_\rho[\psi](x)\,.
\end{equation}

In the following proposition we show that the map $\mathcal{H}_\rho$ has indeed a fixed point in the space
\begin{equation} \label{spacefixedpoint}
\mathcal{X}_{\rho,\e} :=
\Bigl\{ \psi\in C(\R) \;:\; \int_{-\infty}^\infty\psi(x)\de x =0,\;
|\psi(x)|\leq \e e^{\gamma(x)}\, \Bigr\}\,,
\qquad
\|\psi\| := \sup_{x\in\R}\frac{|\psi(x)|}{\e e^{\gamma(x)}}\,,
\end{equation}
for $\e>0$, $\rho>0$ sufficiently small, where
\begin{equation} \label{gamma}
\gamma(x) :=
\begin{cases}
\frac{x}{1+\rho} & \text{if } x<0,\\
-\beta x & \text{if }x\geq 0,
\end{cases}
\end{equation}
and $\beta\in(\frac12,\frac{1}{1+\rho})$ is a fixed parameter.

\begin{proposition} \label{prop:fixedpoint}
There exist $\e>0$, $\rho_1>0$ such that for every $\rho\in(0,\rho_1)$ there is $\psi\in\mathcal{X}_{\rho,\e}$ such that $\mathcal{H}_\rho[\psi]=\psi$.
\end{proposition}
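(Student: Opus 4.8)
The plan is to apply the Banach fixed point theorem (contraction mapping principle) to the map $\mathcal{H}_\rho$ on the space $\mathcal{X}_{\rho,\e}$. First I would verify that $\mathcal{X}_{\rho,\e}$ is a complete metric space under the weighted sup-norm $\|\psi\|:=\sup_x e^{-\gamma(x)}|\psi(x)|$; completeness is routine since the zero-mass constraint and the pointwise bound $|\psi(x)|\le\e e^{\gamma(x)}$ are both preserved under uniform-on-compacts limits (note $e^{\gamma(x)}\in L^1(\R)$ because $\gamma(x)=x/(1+\rho)$ near $-\infty$ and $\gamma(x)=-\beta x$ with $\beta>\frac12>0$ near $+\infty$, so convergence in $\|\cdot\|$ gives $L^1$ convergence and the mass constraint passes to the limit). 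The heart of the argument is then a pair of estimates: (a) $\mathcal{H}_\rho$ maps $\mathcal{X}_{\rho,\e}$ into itself, and (b) $\mathcal{H}_\rho$ is a contraction on $\mathcal{X}_{\rho,\e}$, both for $\e$ and $\rho$ chosen small enough.

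For the self-mapping property, I would estimate each remainder term $R_{1,\rho}, R_{2,\rho}, R_{3,\rho}$ separately, showing each is bounded pointwise by (const)$\cdot e^{\gamma(x)}$ times a small factor. The term $R_{1,\rho}[\psi]=\Psi(x)\int_x^\infty\psi$ is quadratic in $\psi$, hence of order $\e^2$; one checks the convolution-type bound $|R_{1,\rho}[\psi](x)|\lesssim \e^2 e^{\gamma(x)}$ using that $\int_{-\infty}^x e^{\gamma(y)}\de y$ and $\int_x^\infty e^{\gamma(z)}\de z$ have the right exponential decay at $\mp\infty$ and that near $x\to+\infty$ the product $(\int_{-\infty}^x e^{y/(1+\rho)}\de y)(\int_x^\infty e^{-\beta z}\de z)$ behaves like $e^{-\beta x}$ — this is exactly why the choice $\beta<\frac{1}{1+\rho}$ matters (the decaying factor, not the $O(1)$ one, controls the product). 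For $R_{2,\rho}$, the key input is \eqref{kernel3}: in the region $y<x<z$ one has $e^{(y-z)/\rho}<1$, so $|K(e^{(y-z)/\rho},1)-1|\le K_0 e^{\alpha(y-z)/\rho}$; the extra exponential weight $e^{\alpha(y-z)/\rho}$ produces a factor that is small — of order $\rho$ or better — after integration, killing the borderline behaviour at infinity and in fact gaining smallness in $\rho$ uniformly. For $R_{3,\rho}$, the domain of integration is $y<x$, $x+\rho\ln(1-e^{(y-x)/\rho})<z<x$; since $\rho\ln(1-e^{(y-x)/\rho})\to0$ as $\rho\to0$ pointwise and the integrand is bounded on bounded sets, one shows this region shrinks and contributes a factor $o(1)$ as $\rho\to0$ (using the homogeneity computation $\int K(e^{(y-z)/\rho},1)\de y\sim\rho$ already carried out in Section~\ref{sect:euristics}). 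Combining, $\|\mathcal{R}_\rho[\psi]\|\le C(\e^2+\rho+\e)\,$ — one must be slightly careful since $R_2,R_3$ are not purely quadratic, they contain $\lambar\lambar$ terms; but those linear-in-nothing terms carry the smallness factor $\rho$ (resp. $o(1)$) from the kernel estimate, while the terms linear in $\psi$ carry $\e\cdot\rho$. Finally one checks the prefactor $\frac{\lambar(x)}{(1+\rho)^2}\frac{1-e^{x/(1+\rho)}}{1+e^{x/(1+\rho)}}\int_0^x \mathcal{R}_\rho[\psi](y)/\lambar(y)\,\de y$ also satisfies the $e^{\gamma(x)}$ bound: here $\lambar(x)\sim e^{-x/(1+\rho)}$ as $x\to+\infty$ while $1/\lambar(y)\sim e^{y/(1+\rho)}$, and since the integrand of $\mathcal{R}_\rho$ decays like $e^{-\beta y}$ with $\beta<\frac{1}{1+\rho}$, the integral $\int_0^x e^{y/(1+\rho)}e^{-\beta y}\de y\sim e^{(1/(1+\rho)-\beta)x}$, so the whole prefactor is $\sim e^{-\beta x}=e^{\gamma(x)}$ at $+\infty$ — again the inequality $\beta<\frac1{1+\rho}$ is exactly what makes this work; at $-\infty$ one similarly checks the bound $e^{x/(1+\rho)}$. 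Choosing first $\rho$ small, then $\e$ small (with $\e\sim\sqrt\rho$ say), gives $C(\e^2+\rho+\e\rho)\le\e$, i.e.\ $\mathcal{H}_\rho:\mathcal{X}_{\rho,\e}\to\mathcal{X}_{\rho,\e}$.

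For the contraction estimate, I would compute $\mathcal{R}_\rho[\psi_1]-\mathcal{R}_\rho[\psi_2]$ and observe that each difference is linear in $\psi_1-\psi_2$ with coefficients bounded by $\e$ (for the quadratic $R_1$ term and the $\psi$-dependent parts of $R_2, R_3$) or by the small kernel/region factors ($\rho$, $o(1)$) for the parts that multiply $\lambar$. Running the same weighted estimates gives $\|\mathcal{H}_\rho[\psi_1]-\mathcal{H}_\rho[\psi_2]\|\le C(\e+\rho)\|\psi_1-\psi_2\|$, which is $<\frac12$ once $\e,\rho$ are small enough, so $\mathcal{H}_\rho$ is a contraction and Banach's theorem yields the unique fixed point $\psi\in\mathcal{X}_{\rho,\e}$.

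The main obstacle I anticipate is the borderline decay at $x\to+\infty$: the leading (unperturbed) bilinear form $\int_{-\infty}^x\lambar\int_x^\infty\psi$ does not improve the decay rate — it roughly reproduces $e^{\gamma(x)}$ rather than beating it — so one cannot close the self-map estimate by the leading term alone and must squeeze all the smallness out of the remainder. This forces the twin compromises visible in the setup: working with the suboptimal exponent $\beta<\frac1{1+\rho}$ (instead of the true decay $e^{-x}$) so that the various convolution integrals at $+\infty$ are dominated by the genuinely-decaying factor, and extracting a small prefactor $\rho$ from the kernel perturbation \eqref{kernel3} and from the collapsing integration region in $R_{3,\rho}$ to compensate for the lack of gain in $R_{1,\rho}$. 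Getting all three remainder terms, \emph{and} the $\int_0^x \mathcal{R}_\rho/\lambar$ prefactor in \eqref{mapfixedpoint}, to respect the $e^{\gamma(x)}$ envelope simultaneously — with constants uniform in $\rho$ — is the delicate bookkeeping at the core of the proof; everything else is a routine application of the contraction principle.
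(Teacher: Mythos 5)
Your route is the same as the paper's: Banach's fixed point theorem on $\mathcal{X}_{\rho,\e}$, with the three terms of \eqref{remainder} estimated separately, the smallness coming from \eqref{kernel3} for $R_{2,\rho}$, from the collapsing integration region for $R_{3,\rho}$, and from the quadratic structure for $R_{1,\rho}$; your treatment of the prefactor term in \eqref{mapfixedpoint} at $x\to+\infty$, resting on $\beta<\frac{1}{1+\rho}$, is exactly the paper's \eqref{fp16}.

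There is, however, one genuine gap, located precisely in the bookkeeping you yourself flag as delicate. Your stated bound $|R_{1,\rho}[\psi](x)|\lesssim\e^2 e^{\gamma(x)}$ is not strong enough at $x\to-\infty$ to control the first term of \eqref{mapfixedpoint}. Indeed, for $y\ll 0$ one has $1/\lambar(y)\sim (1+\rho)e^{-y/(1+\rho)}$, so if $\mathcal{R}_\rho[\psi](y)$ is only $O(e^{\gamma(y)})=O(e^{y/(1+\rho)})$, then $\mathcal{R}_\rho[\psi](y)/\lambar(y)$ is merely bounded, the integral $\int_0^x$ grows like $|x|$, and you obtain $|\mathcal{H}_\rho[\psi](x)|\lesssim \delta\,|x|\,e^{x/(1+\rho)}$ with $\delta=\e^2+\rho+o(1)$; this violates the envelope $\e e^{x/(1+\rho)}$ for $|x|$ large no matter how small $\delta$ is, so the self-map property fails as stated, and your remark that ``at $-\infty$ one similarly checks the bound'' glosses over exactly this point. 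What is needed---and what the paper proves---is that $\mathcal{R}_\rho[\psi]$ decays as $x\to-\infty$ strictly faster than $e^{x/(1+\rho)}$, by an integrable margin (the extra factor $e^{x/2}$ in \eqref{fp14}), so that $\int_0^x\mathcal{R}_\rho[\psi]/\lambar$ stays bounded. For $R_{1,\rho}$ this is obtained from the zero-mass constraint, writing $\int_x^\infty\psi=-\int_{-\infty}^x\psi$ so that $|R_{1,\rho}[\psi](x)|=\bigl(\int_{-\infty}^x\psi\bigr)^2\le C\e^2e^{2\gamma(x)}$ for $x<0$, and for $R_{2,\rho}$, $R_{3,\rho}$ from the additional factors $e^{-|x|/2}$ and $e^{2\gamma(x)}$ in \eqref{fp6} and \eqref{fp12}. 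With this refinement (plus the short verification, which you also omit, that $\int_{\R}\mathcal{H}_\rho[\psi]=0$, following from $\int_{-\infty}^x\mathcal{H}_\rho[\psi]=\Psi(x)\to0$ as $|x|\to\infty$), your argument closes and coincides with the paper's proof.
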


\begin{proof}
The goal is to prove that the operator $\mathcal{H}_\rho$ maps the space $\mathcal{X}_{\rho,\e}$ into itself and is strongly contractive if $\rho$ and $\e$ are small enough:
\begin{enumerate}
	\item\label{item1fp} $\mathcal{H}_\rho[\psi]\in\mathcal{X}_{\rho,\e}$ for every $\psi\in\mathcal{X}_{\rho,\e}$,
	\medskip
	\item\label{item2fp} $\| \mathcal{H}_\rho[\psi_1] - \mathcal{H}_\rho[\psi_2] \| \leq \sigma \|\psi_1-\psi_2\|$ for every $\psi_1,\psi_2\in\mathcal{X_{\rho,\e}}$, for some $\sigma\in(0,1)$.
\end{enumerate}
Banach's fixed point theorem will then imply the conclusion. Along the proof, we will denote by $C$ a generic constant depending possibly only on the fixed constants $K_0$, $\alpha$ appearing in \eqref{kernel3} and on $\beta$, but uniform with respect to $\rho$ and $\e$, which may change from line to line.

\medskip\noindent
\textit{Step 1: estimates on $\mathcal{R}_\rho[\psi]$.} We start by proving some preliminary estimates on the remainder term \eqref{remainder}. For the first integral in \eqref{remainder}, we have for all $x>0$
\begin{align} \label{fp1}
|R_{1,\rho}[\psi](x)|
&= \bigg| \int_{-\infty}^x \psi(y)\de y \int_x^\infty\psi(z)\de z \bigg|
= \bigg| \int_x^\infty \psi(z)\de z \bigg|^2 \nonumber\\
& \leq \e^2 \biggl( \int_x^\infty e^{\gamma(z)}\de z \biggr)^2
\leq C\e^2 e^{2\gamma(x)}\,,
\end{align}
while for $x<0$
\begin{align} \label{fp2}
|R_{1,\rho}[\psi](x)|
= \bigg| \int_{-\infty}^x \psi(z)\de z \bigg|^2
\leq \e^2 \biggl( \int_{-\infty}^x e^{\gamma(z)}\de z \biggr)^2
\leq C\e^2 e^{2\gamma(x)}\,.
\end{align}

\begin{figure}
	\centering
	\includegraphics{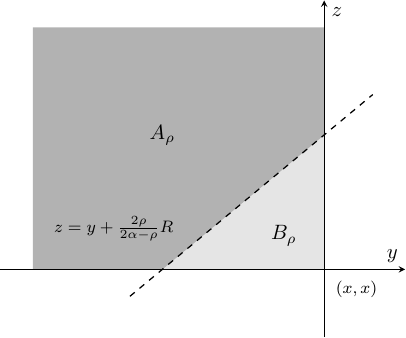}
	\hspace{4ex}
	\includegraphics{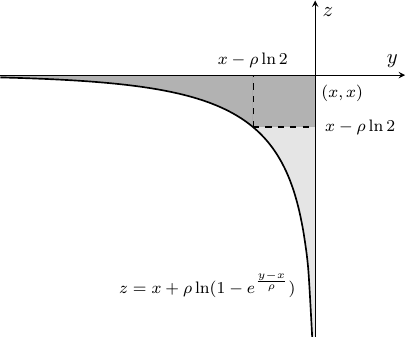}
	\caption{Left: the domain of integration in \eqref{fp3}, split into the two regions $A_\rho$, $B_\rho$. Right: the domain of integration in \eqref{fp7}.}
	\label{fig:domains}
\end{figure}

To estimate the second integral in \eqref{remainder}, we use the assumption \eqref{kernel3} together with the trivial bound $\lambar(x)\leq e^{\gamma(x)}$:
\begin{align} \label{fp3}
|R_{2,\rho}[\psi](x)|
&= \bigg| \int_{-\infty}^x \int_x^\infty \Bigl[ K(e^\frac{y-z}{\rho},1)-1 \Bigr] \bigl(\lambar+\psi\bigr)(y) \bigl(\lambar+\psi\bigr)(z)\de z \de y \bigg| \nonumber\\
&\leq C \int_{-\infty}^x\int_x^\infty e^{\frac{\alpha}{\rho}(y-z)} e^{\gamma(y)}e^{\gamma(z)} \de z \de y \nonumber\\
&= C\iint_{A_\rho} e^{\frac{\alpha}{\rho}(y-z)} e^{\gamma(y)}e^{\gamma(z)} \de y \de z
+ C\iint_{B_\rho} e^{\frac{\alpha}{\rho}(y-z)} e^{\gamma(y)}e^{\gamma(z)} \de y \de z\,,
\end{align}
where we split the domain of integration into the two regions $A_\rho$ and $B_\rho$ defined as
\begin{align*}
A_\rho := \bigl\{ (y,z)\in\R^2 \;:\; y\leq x,\, z\geq x,\, z \geq y+ \textstyle\frac{2\rho}{2\alpha-\rho}R  \bigr\}\,, \\
B_\rho := \bigl\{ (y,z)\in\R^2 \;:\; y\leq x,\, z\geq x,\, z<y+ \textstyle\frac{2\rho}{2\alpha-\rho}R  \bigr\}\,,
\end{align*}
see Figure~\ref{fig:domains}, left. Here $R>0$ is to be chosen later and $\alpha$ is the  coefficient appearing in \eqref{kernel3}; notice that $2\alpha-\rho>0$ by choosing $\rho$ sufficiently small. We also remark that, for $R$ fixed, $|B_\rho|\to0$ as $\rho\to0$. With this choice we have
\begin{equation*}
e^{\frac{\alpha}{\rho}(y-z)} = e^{\frac12(y-z)} e^{\frac{2\alpha-\rho}{2\rho}(y-z)} \leq e^{-R}e^{\frac12(y-z)}
\qquad\text{if }(y,z)\in A_\rho,
\end{equation*}
which yields
\begin{align} \label{fp4}
\iint_{A_\rho} e^{\frac{\alpha}{\rho}(y-z)} e^{\gamma(y)}e^{\gamma(z)} \de y \de z
&\leq e^{-R} \int_{-\infty}^x e^{\frac12y+\gamma(y)}\de y \int_x^\infty e^{-\frac12z+\gamma(z)}\de z \nonumber\\
&\leq Ce^{-R}e^{-\frac12|x|}e^{\gamma(x)}\,.
\end{align}
Moreover, we claim that if $\rho$ is sufficiently small (depending on $R$), we have
\begin{equation} \label{fp5bis}
e^{\gamma(y)}e^{\gamma(z)} \leq 2 e^{2\gamma(x)}
\qquad\text{if }(y,z)\in B_\rho.
\end{equation}
Indeed, consider first the case $x>0$: we have that $e^{\gamma(z)}\leq e^{\gamma(x)}$ since $z>x$; moreover $y\geq x - \frac{2\rho}{2\alpha-\rho}R$, and by choosing $\rho$ (depending on $R$) so that the quantity $\frac{2\rho}{2\alpha-\rho}R$ is small enough, we also have $e^{\gamma(y)}\leq 2e^{\gamma(x)}$.
This shows that \eqref{fp5bis} holds if $x>0$, and the case $x<0$ is analogous.
Hence by \eqref{fp5bis}
\begin{align} \label{fp5}
\iint_{B_\rho} e^{\frac{\alpha}{\rho}(y-z)} e^{\gamma(y)}e^{\gamma(z)} \de y \de z
\leq \iint_{B_\rho} e^{\gamma(y)}e^{\gamma(z)}\de y \de z
\leq C|B_\rho|e^{2\gamma(x)}\,.
\end{align}
At this point, we can first choose $R$ in order to make $e^{-R}$ small, and consequently we can choose $\rho$ small (depending on $R$) such that \eqref{fp5bis} holds and $|B_\rho|$ is small. In this way, combining \eqref{fp4} and \eqref{fp5}, we obtain from \eqref{fp3} that
\begin{equation} \label{fp6}
|R_{2,\rho}[\psi](x)| \leq C(\rho) e^{-\frac12|x|}e^{\gamma(x)}\,,
\end{equation}
where $C(\rho)$ is a constant such that $C(\rho)\to0$ as $\rho\to0$.

Finally, we consider the third term in \eqref{remainder}:
\begin{align} \label{fp7}
|R_{3,\rho}[\psi](x)| \leq
C \int_{-\infty}^x \de z \int_{x+\rho\ln(1-e^\frac{z-x}{\rho})}^x \de y \, K(e^\frac{y-z}{\rho},1) e^{\gamma(y)}e^{\gamma(z)}\,.
\end{align}
We split also this integral in two regions, the one where $z<x-\rho\ln 2$ and its complement (see Figure~\ref{fig:domains}, right).
We also have to distinguish the cases $x>0$ and $x<0$. For $x>0$,
\begin{align} \label{fp8}
\int_{-\infty}^{x-\rho\ln2} \de z & \int_{x+\rho\ln(1-e^\frac{z-x}{\rho})}^x \de y \, K(e^\frac{y-z}{\rho},1) e^{\gamma(y)}e^{\gamma(z)} \nonumber\\
& \xupref{kernel2}{=} \int_{-\infty}^{x-\rho\ln2} \de z \int_{x+\rho\ln(1-e^\frac{z-x}{\rho})}^x \de y \, K(1, e^\frac{z-y}{\rho})e^\frac{y-z}{\rho} e^{\gamma(y)}e^{\gamma(z)} \nonumber\\
& \xupref{kernel3}{\leq} C \int_{-\infty}^{x-\rho\ln2} \de z \, e^{-\frac{z}{\rho}}e^{\gamma(z)} \int_{x+\rho\ln(1-e^\frac{z-x}{\rho})}^x e^{\frac{y}{\rho}} e^{\gamma(y)}\de y \nonumber\\
& \leq C e^{\gamma(x)} \int_{-\infty}^{x-\rho\ln2} \de z \, e^{-\frac{z}{\rho}}e^{\gamma(z)} \int_{x+\rho\ln(1-e^\frac{z-x}{\rho})}^x e^{\frac{y}{\rho}}\de y \nonumber\\
& = C\rho e^{\gamma(x)}\int_{-\infty}^{x-\rho\ln 2} e^{\gamma(z)}\de z
\leq C\rho e^{\gamma(x)}
\qquad\qquad\qquad (x>0),
\end{align}
where in the second inequality we also used the fact that $e^{\gamma(y)}\leq C e^{\gamma(x)}$ in the region of integration, since $y\in(x-\rho\ln 2, x)$.
The same argument gives for $x<0$ the better decay
\begin{align} \label{fp9}
\int_{-\infty}^{x-\rho\ln2}\de z & \int_{x+\rho\ln(1-e^\frac{z-x}{\rho})}^x \de y \, K(e^\frac{y-z}{\rho},1) e^{\gamma(y)}e^{\gamma(z)} \nonumber\\
& \leq C\rho e^{\gamma(x)}\int_{-\infty}^{x-\rho\ln 2} e^{\gamma(z)}\de z
\leq C\rho e^{2\gamma(x)}
\qquad\qquad\qquad (x<0).
\end{align}
We now turn to the integral in the region $x-\rho\ln 2 \leq z \leq x$.
Notice that, by continuity of $K$,
\begin{equation*}
K(e^\frac{y-z}{\rho},1)\leq C\qquad\text{if }y\leq x \text{ and } z\geq x-\rho\ln2.
\end{equation*}
Hence, in the case of $x>0$ we have
\begin{align} \label{fp10}
\int_{x-\rho\ln2}^x \de z & \int_{x+\rho\ln(1-e^\frac{z-x}{\rho})}^x \de y \, K(e^\frac{y-z}{\rho},1) e^{\gamma(y)}e^{\gamma(z)} \nonumber\\
& \leq C \int_{x-\rho\ln2}^x \de z \, e^{\gamma(z)} \int_{x+\rho\ln(1-e^\frac{z-x}{\rho})}^x e^{\gamma(y)}\de y \nonumber\\
& \leq C \rho e^{\gamma(x)} \biggl(-\int_{x-\rho\ln2}^x \ln(1-e^\frac{z-x}{\rho})\de z\biggr) \nonumber\\
& = C \rho^2 e^{\gamma(x)} \biggl(-\int_{-\ln 2}^{0} \ln(1-e^w)\de w\biggr) \leq C \rho^2 e^{\gamma(x)}
\qquad\qquad\qquad (x>0),
\end{align}
where in the second inequality we used the fact that $e^{\gamma(z)}\leq C e^{\gamma(x)}$ for $z\in(x-\rho\ln2,x)$ and $e^{\gamma(y)}\leq 1$.
Arguing similarly in the case $x<0$, and observing that in this case $e^{\gamma(y)}e^{\gamma(z)}\leq e^{2\gamma(x)}$ since $y,z\leq x<0$, we have
\begin{align} \label{fp11}
\int_{x-\rho\ln2}^x \de z & \int_{x+\rho\ln(1-e^\frac{z-x}{\rho})}^x \de y \, K(e^\frac{y-z}{\rho},1) e^{\gamma(y)}e^{\gamma(z)} \nonumber\\
& \leq C e^{2\gamma(x)} \int_{x-\rho\ln2}^x \de z \int_{x+\rho\ln(1-e^\frac{z-x}{\rho})}^x \de y
\leq C \rho^2 e^{2\gamma(x)}
\qquad (x<0).
\end{align}
Collecting \eqref{fp8}--\eqref{fp11} and inserting them in \eqref{fp7} we get
\begin{equation} \label{fp12}
|R_{3,\rho}[\psi](x)| \leq
\begin{cases}
C\rho e^{2\gamma(x)} & \text{if }x<0,\\
C\rho e^{\gamma(x)} & \text{if }x>0.
\end{cases}
\end{equation}

Finally, bringing together \eqref{fp1}, \eqref{fp2}, \eqref{fp6} and \eqref{fp12} we get the desired estimate on the remainder term \eqref{remainder}:
\begin{equation} \label{fp14}
|\mathcal{R}_\rho[\psi](x)|\leq
\begin{cases}
C \bigl( \e^2 + \rho + C(\rho) \bigr) e^{\frac{x}{2}}e^{\frac{x}{1+\rho}} & \text{if }x<0,\\
C \bigl( \e^2 + \rho + C(\rho) \bigr) e^{-\beta x} & \text{if }x>0,
\end{cases}
\end{equation}
with $C(\rho)\to0$ as $\rho\to0$.

\medskip\noindent
\textit{Step 2: proof of \ref{item1fp}.}
We have to show that $|\mathcal{H}_\rho[\psi](x)|\leq \e e^{\gamma(x)}$ for all $x\in\R$.
Notice that by definition \eqref{lambdabar} of $\lambar$ we have
\begin{equation*}
\lambar(x) \leq e^{-\frac{|x|}{1+\rho}}\,, \qquad \frac{1}{\lambar(x)}\leq Ce^\frac{|x|}{1+\rho}\,.
\end{equation*}
It follows using \eqref{fp14} that for $x<0$
\begin{align} \label{fp15}
\bigg| \lambar(x) \int_0^x \frac{\mathcal{R}_\rho[\psi](y)}{\lambar(y)}\de y \bigg| 
& \leq C\bigl( \e^2 + \rho + C(\rho) \bigr)  e^{\frac{x}{1+\rho}} \int_x^0 e^{\frac{y}{2}}\de y \nonumber\\
& \leq C\bigl( \e^2 + \rho + C(\rho) \bigr)  e^{\frac{x}{1+\rho}}\,,
\end{align}
and similarly for $x>0$
\begin{align} \label{fp16}
\bigg| \lambar(x) \int_0^x \frac{\mathcal{R}_\rho[\psi](y)}{\lambar(y)}\de y \bigg| 
&\leq C\bigl( \e^2 + \rho + C(\rho) \bigr)  e^{-\frac{x}{1+\rho}} \int_0^x e^{(\frac{1}{1+\rho}-\beta) y}\de y \nonumber\\
&\leq C\bigl( \e^2 + \rho + C(\rho) \bigr)  e^{-\beta x} e^{-(\frac{1}{1+\rho}-\beta) x}\int_0^x e^{(\frac{1}{1+\rho}-\beta) y}\de y \nonumber\\
&\leq C\bigl( \e^2 + \rho + C(\rho) \bigr)  e^{-\beta x}\,.
\end{align}
By definition \eqref{mapfixedpoint} of $\mathcal{H}_\rho$ we hence obtain from \eqref{fp14}, \eqref{fp15}, \eqref{fp16}
\begin{equation} \label{fp17}
|\mathcal{H}_\rho[\psi](x)| \leq
\begin{cases}
 C\bigl( \e^2 + \rho + C(\rho) \bigr) e^{\frac{x}{1+\rho}} & \text{if }x<0,\\
 C\bigl( \e^2 + \rho + C(\rho) \bigr) e^{-\beta x} & \text{if }x>0.
\end{cases}
\end{equation}
This yields $|\mathcal{H}_\rho[\psi](x)|\leq\e e^{\gamma(x)}$ provided $\e$ and $\rho$ are sufficiently small, as claimed.

Moreover, since by construction $\int_{-\infty}^x \mathcal{H}_\rho[\psi](x)\de x = \Psi(x)$, where $\Psi$ is the function defined in \eqref{eqtnPsi}, with $\Psi(x)\to0$ as $|x|\to\infty$ by the previous estimates, we also have
$$
\int_{-\infty}^\infty \mathcal{H}_\rho[\psi](x)\de x =0\,.
$$
It follows that $\mathcal{H}_\rho[\psi]\in\mathcal{X}_{\rho,\e}$ for all $\psi\in\mathcal{X}_{\rho,\e}$.

\medskip\noindent
\textit{Step 3: proof of \ref{item2fp}.}
Let $\psi_1,\psi_2\in\mathcal{X}_{\rho,\e}$ and set
\begin{equation*}
M := \|\psi_1-\psi_2\| = \sup_{x\in\R}\frac{|\psi_1(x)-\psi_2(x)|}{\e e^{\gamma(x)}}\,.
\end{equation*}
We first deduce a bound on the difference $\big| \mathcal{R}_\rho[\psi_1](x) - \mathcal{R}_\rho[\psi_2](x) \big|$, by estimates similar to those used in the first step of the proof. Indeed, using the fact that $\int_{-\infty}^\infty\psi_i=0$, we have for every $x<0$
\begin{align} \label{fp18}
\big| R_{1,\rho}[\psi_1](x) - R_{1,\rho}[\psi_2](x) \big|
& = \bigg| \biggl(\int_{-\infty}^x\psi_1(y)\de y\biggr)^2 -  \biggl( \int_{-\infty}^x \psi_2(y)\de y\biggr)^2 \bigg| \nonumber\\
& \leq \int_{-\infty}^x |\psi_1-\psi_2|(y)\de y \int_{-\infty}^x |\psi_1+\psi_2|(y)\de y \nonumber\\
& \leq 2\e^2 M \biggl( \int_{-\infty}^x e^{\gamma(y)}\de y \biggr)^2 \leq C\e^2Me^{2\gamma(x)}\,,
\end{align}
and similarly for $x>0$
\begin{align} \label{fp19}
\big| R_{1,\rho}[\psi_1](x) - R_{1,\rho}[\psi_2](x) \big|
= \bigg| \biggl(\int_{x}^\infty\psi_1(y)\de y\biggr)^2 -  \biggl( \int_{x}^\infty \psi_2(y)\de y\biggr)^2 \bigg|
\leq C\e^2Me^{2\gamma(x)}\,.
\end{align}
Observe now that
\begin{align} \label{fp20}
\big| \bigl(\lambar+\psi_1\bigr)(y) &\bigl(\lambar+\psi_1\bigr)(z) - \bigl(\lambar+\psi_2\bigr)(y) \bigl(\lambar+\psi_2\bigr)(z) \big| \nonumber\\
& = \big| \bigl(\lambar+\psi_1\bigr)(y) \bigl(\psi_1-\psi_2\bigr)(z) + \bigl(\lambar+\psi_2\bigr)(z) \bigl(\psi_1-\psi_2\bigr)(y) \big| \nonumber\\
&\leq C\e M e^{\gamma(y)} e^{\gamma(z)}\,.
\end{align}
By using \eqref{fp20}, we can bound the second term $R_{2,\rho}$ by
\begin{align} \label{fp21}
\big| R_{2,\rho}[\psi_1](x) - R_{2,\rho}[\psi_2](x) \big|
& \leq C\e M \int_{-\infty}^x \int_x^\infty \big| K(e^\frac{y-z}{\rho},1)-1 \big| e^{\gamma(y)}e^{\gamma(z)}\de z \de y \nonumber\\
& \leq \e M C(\rho) e^{-\frac{|x|}{2}}e^{\gamma(x)}
\end{align}
for some constant $C(\rho)\to0$ as $\rho\to0$,
where the second inequality is obtained by the same estimates used to prove \eqref{fp6}.
Similarly, using again \eqref{fp20} we can bound the third term $R_{3,\rho}$ by
\begin{align*}
\big| R_{3,\rho}[\psi_1](x) - R_{3,\rho}[\psi_2](x) \big|
\leq C\e M \int_{-\infty}^x \int_{x+\rho\ln(1-e^\frac{y-x}{\rho})}^x K(e^\frac{y-z}{\rho},1) e^{\gamma(y)}e^{\gamma(z)}\de z \de y
\end{align*}
from which we get, by the same estimates yielding \eqref{fp12},
\begin{equation} \label{fp22}
\big| R_{3,\rho}[\psi_1](x) - R_{3,\rho}[\psi_2](x) \big| \leq
\begin{cases}
C\e M \rho e^{2\gamma(x)} & \text{if }x<0,\\
C\e M \rho e^{\gamma(x)} & \text{if }x>0.
\end{cases}
\end{equation}
Collecting \eqref{fp18}, \eqref{fp19}, \eqref{fp21} and \eqref{fp22} we finally obtain
\begin{equation} \label{fp23}
\big| \mathcal{R}_\rho[\psi_1](x) - \mathcal{R}_\rho[\psi_2](x) \big|
\leq
\begin{cases}
CM\e \bigl( \e + \rho + C(\rho) \bigr) e^{\frac{x}{2}}e^\frac{x}{1+\rho} & \text{if }x<0,\\
CM\e \bigl( \e + \rho + C(\rho) \bigr) e^{-\beta x} & \text{if }x>0,
\end{cases}
\end{equation}
hence, recalling the definition of $M:=\|\psi_1-\psi_2\|$ and of the norm in the space $\mathcal{X}_{\rho,\e}$,
\begin{equation} \label{fp24}
\| \mathcal{R}_\rho[\psi_1] - \mathcal{R}_\rho[\psi_2] \| \leq C \bigl(\e+\rho+C(\rho)\bigr) \|\psi_1-\psi_2\|\,.
\end{equation}

In turn, by using \eqref{fp23}--\eqref{fp24} and arguing similarly to \eqref{fp15}--\eqref{fp16}, we easily obtain
\begin{equation}
\| \mathcal{H}_\rho[\psi_1] - \mathcal{H}_\rho[\psi_2] \| \leq C \bigl( \e + \rho + C(\rho) \bigr) \|\psi_1-\psi_2\|\,,
\end{equation}
so that by choosing $\e$ and $\rho$ sufficiently small we conclude that the map $\mathcal{H}_\rho$ is a contraction in the space $\mathcal{X}_{\rho,\e}$.
\end{proof}

The fixed point obtained in Proposition~\ref{prop:fixedpoint} is a solution of \eqref{eqtnpsi} and, in turn, the map $\lambda:=\lambar+\psi$ is the sought solution to \eqref{fixedpoint}. Notice that such solution is positive for $x<0$ and already has the expected decay behaviour at $x\to-\infty$, but not at $x\to+\infty$: the second condition in \eqref{asymp2}, together with the positivity and the differentiability of the solution, will be established in the next section.

\begin{corollary} \label{cor:fixedpoint}
For every $\rho\in(0,\rho_1)$ there exists a solution $\lambda\in C(\R)$ to \eqref{fixedpoint}, which in addition satisfies
\begin{equation} \label{cor1}
\int_{-\infty}^\infty \lambda(x)\de x =1,
\end{equation}
\begin{equation} \label{cor2}
\frac{1}{16} e^\frac{x}{1+\rho} \leq \lambda(x) \leq 2 e^\frac{x}{1+\rho}\quad\text{for }x<0,
\end{equation}
\begin{equation} \label{cor3}
|\lambda(x)| \leq 2 e^{-\beta x} \quad\text{for all }x>0,
\end{equation}
where $\beta\in(\frac12,\frac{1}{1+\rho})$ is a fixed parameter.
\end{corollary}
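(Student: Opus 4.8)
The plan is to undo the reduction that led to \eqref{mapfixedpoint} and then read off the stated bounds from $\psi\in\mathcal{X}_{\rho,\e}$ together with elementary estimates on $\lambar$. Throughout, $\psi$ denotes the fixed point furnished by Proposition~\ref{prop:fixedpoint} and $\lambda:=\lambar+\psi$; continuity of $\lambda$ is clear, and $\int_{-\infty}^\infty\psi=0$ because $\psi\in\mathcal{X}_{\rho,\e}$.

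First I would verify that $\lambda$ solves \eqref{fixedpoint}. Since $|\psi(x)|\le\e e^{\gamma(x)}$, the primitive $\Psi(x):=\int_{-\infty}^x\psi(y)\de y$ is well defined, of class $C^1$, with $\Psi'=\psi$. Using the identity $\lambar'(x)/\lambar(x)=\frac{1}{1+\rho}\frac{1-e^{x/(1+\rho)}}{1+e^{x/(1+\rho)}}$, one checks that the right-hand side of \eqref{mapfixedpoint} is exactly the derivative of $x\mapsto\frac{\lambar(x)}{1+\rho}\int_0^x\frac{\mathcal{R}_\rho[\psi](y)}{\lambar(y)}\de y$, so the equation $\psi=\mathcal{H}_\rho[\psi]$ says that $\Psi$ and this function have equal derivatives. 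They also agree at $-\infty$: by \eqref{fp14} and the bound $\lambar(y)\ge\frac18 e^{y/(1+\rho)}$ for $y<0$, the integrand $\mathcal{R}_\rho[\psi]/\lambar$ is $O(e^{y/2})$ near $-\infty$, so $\int_0^x\mathcal{R}_\rho[\psi]/\lambar$ stays bounded while $\lambar(x)\to0$. Hence $\Psi$ coincides with \eqref{eqtnPsi} and therefore solves the first-order ODE written just above it; substituting $\int_{-\infty}^x\lambar(y)\de y=\frac{e^{x/(1+\rho)}}{1+e^{x/(1+\rho)}}$ and $\int_{-\infty}^\infty\psi=0$ rewrites the transport term as the double integral in \eqref{eqtnpsi}, so $\psi$ solves \eqref{eqtnpsi}. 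Adding the equation satisfied by $\lambar$ shows that $\lambda$ solves \eqref{fixedpoint2}, which by the reformulation at the beginning of the section is \eqref{fixedpoint}. (Alternatively, one may simply invoke the observation recorded immediately after Proposition~\ref{prop:fixedpoint}.)

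The remaining assertions are then immediate. Since the change of variables $\rho\lambda(x)=e^{x/\rho}g(e^{x/\rho})$ is mass-preserving and $\int\lambar=1$, $\int\psi=0$, we get \eqref{cor1}. For the pointwise bounds I would combine $|\psi(x)|\le\e e^{\gamma(x)}$ with the elementary estimates $\frac18 e^{x/(1+\rho)}\le\lambar(x)\le e^{x/(1+\rho)}$ for $x<0$ (valid because $1<1+\rho<2$ and $1<(1+e^{x/(1+\rho)})^2<4$) and $\lambar(x)\le\frac{1}{1+\rho}e^{-x/(1+\rho)}\le e^{-\beta x}$ for $x>0$ (since $0<\beta<\frac{1}{1+\rho}$): the triangle inequality yields $\frac{1}{16}e^{x/(1+\rho)}\le\lambda(x)\le2e^{x/(1+\rho)}$ for $x<0$ and $|\lambda(x)|\le2e^{-\beta x}$ for $x>0$, provided $\e\le\frac{1}{16}$, which we assume without loss of generality (Proposition~\ref{prop:fixedpoint} remains valid for any smaller $\e$, upon correspondingly shrinking $\rho_1$). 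Note that this argument gives no information on the sign of $\lambda$ for $x>0$; positivity there, together with the sharp tail $\lambda(x)\sim e^{-x}$, is postponed to the next section.

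I expect the only genuinely delicate step to be the solution property: one has to make sure that nothing is lost or gained when passing between \eqref{fixedpoint} and the fixed point equation $\psi=\mathcal{H}_\rho[\psi]$ — concretely, that the integration constant in \eqref{eqtnPsi} is forced by $\Psi(-\infty)=0$ (which uses both the decay of $\lambar$ and estimate \eqref{fp14}) and that the transport term $\frac{1-e^{x/(1+\rho)}}{1+e^{x/(1+\rho)}}\Psi$ really equals the double integral appearing in \eqref{eqtnpsi}. Everything else is routine bookkeeping with the exponential weights.
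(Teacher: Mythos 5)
Your argument is correct and follows the same route as the paper, which simply defines $\lambda:=\lambar+\psi$ and asserts that it has the required properties for $\e$ small; you merely supply the routine verifications the paper leaves implicit (inverting the reduction $\psi=\mathcal{H}_\rho[\psi]\Rightarrow$ \eqref{eqtnpsi} with the integration constant fixed by the decay at $-\infty$, and the elementary bounds on $\lambar$ with $\e\le\tfrac1{16}$ after shrinking $\rho_1$). No gaps.
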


\begin{proof}
The map $\lambda:=\lambar + \psi$, where $\psi$ is the fixed point constructed in Proposition~\ref{prop:fixedpoint}, satisfies all the required properties for $\e$ sufficiently small.
\end{proof}


\section{Decay behaviour of self-similar solutions} \label{sect:ex2}

In this section we complete the proof of Theorem~\ref{thm:exist} by showing that the solution $\lambda$ to \eqref{fixedpoint} constructed in Corollary~\ref{cor:fixedpoint} is of class $C^1$, strictly positive, and satisfies \eqref{asymp2}. We henceforth continue to assume conditions \eqref{kernel1}--\eqref{kernel3} on the kernel; also along this section we will denote by $C$ a uniform constant, dependent possibly only on the constants $K_0,\alpha$ appearing in \eqref{kernel3}, which may change from line to line.

The main idea to prove the decay $\lambda(x)\sim e^{-x}$ as $x\to\infty$ is to rewrite equation \eqref{fixedpoint} as
$$
\lambda(x)= \int_{-\infty}^x\lambda(y)\de y \int_x^\infty \lambda(z)\de z + \omega(x)\,,
$$
and to show that the remainder term $\omega(x)$ decays faster than $e^{-x}$ as $x\to\infty$. In turn, such decay can be obtained as a consequence of an estimate of the form $|\lambda'(x)|\leq C\lambda(x)$, which we will establish in Lemma~\ref{lem:continuation} below.

In what follows, it will be convenient to separate the region of integration in the right-hand side of \eqref{fixedpoint} into two parts:
\begin{align} \label{eqtndiff}
(1+\rho) \lambda(x)
= \int_{-\infty}^x \int_x^\infty K(e^\frac{y-z}{\rho},1)\lambda(y)\lambda(z)\de z \de y
+ \iint_{(x,x)+\Omega_\rho} K(e^\frac{y-z}{\rho},1)\lambda(y)\lambda(z) \de y\de z \,,
\end{align}
where the second integral is over the translation of the region
\begin{align*}
\Omega_\rho :
&= \bigl\{ (y,z)\in\R^2 \;:\; y\leq 0,\, z\leq0,\, e^\frac{y}{\rho} + e^\frac{z}{\rho} \geq 1  \bigr\}\,.
\end{align*}
As a first step towards the differentiability of $\lambda$, in the following lemma we establish uniform estimates on the difference quotients
\begin{equation} \label{diffquot}
D_h\lambda(x) := \frac{\lambda(x) - \lambda(x-h)}{h}\,.
\end{equation}

\begin{lemma} \label{lem:diffquot}
There exists $\rho_2>0$ such that, for every $\rho\in(0,\rho_2)$, the solution $\lambda$ to \eqref{fixedpoint} constructed in Corollary~\ref{cor:fixedpoint} satisfies
\begin{equation*}
|D_h\lambda(x)| \leq Ce^{\gamma(x)} =
\begin{cases}
Ce^\frac{x}{1+\rho} & \text{for }x<0\\
Ce^{-\beta x} & \text{for }x\geq 0
\end{cases}
\end{equation*}
for all $h\in(0,\rho)$, where $C>0$ is a constant independent of $\rho$ and $h$.
\end{lemma}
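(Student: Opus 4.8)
The proof derives a bound on the difference quotients \emph{directly from the equation}, with a short bootstrap to remove the dependence on $h$. First, fix $h\in(0,\rho)$. Since $\lambda\in C(\R)$ the function $D_h\lambda$ is continuous, and by Corollary~\ref{cor:fixedpoint} one has $|D_h\lambda(x)|\le h^{-1}\bigl(|\lambda(x)|+|\lambda(x-h)|\bigr)\le(C/h)e^{\gamma(x)}$, where we used that $\gamma$ has slopes of absolute value $<1$, so that $e^{\gamma}$ is comparable on any interval of length $<1$. Hence $N_h:=\sup_{x\in\R}e^{-\gamma(x)}|D_h\lambda(x)|$ is finite (for fixed $h$); the goal is to prove $N_h\le C$ with $C$ independent of $h$ and of $\rho\in(0,\rho_2)$. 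Throughout I also use $\|\lambda\|_{L^1(\R)}\le C$ and $|\lambda|\le 2e^{\gamma}$, both from Corollary~\ref{cor:fixedpoint}. Now apply $D_h$ to \eqref{eqtndiff}, written as $(1+\rho)\lambda=T_1+T_2$ with $T_1(x)=\int_{-\infty}^x\int_x^\infty K(e^{(y-z)/\rho},1)\lambda(y)\lambda(z)\,dz\,dy$ and $T_2(x)=\iint_{(x,x)+\Omega_\rho}K(e^{(y-z)/\rho},1)\lambda(y)\lambda(z)\,dy\,dz$.

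For $T_1$ the essential observation is that the kernel $K(e^{(y-z)/\rho},1)$ does not depend on $x$, so $T_1(x)-T_1(x-h)$ equals the integral of $K(e^{(y-z)/\rho},1)\lambda(y)\lambda(z)$ over $[x-h,x]\times[x,\infty)$ minus its integral over $(-\infty,x-h]\times[x-h,x]$ --- two regions in which one variable is confined to an interval of length $h$ and on which $y\le z$, so that $K(e^{(y-z)/\rho},1)\le 1+K_0$ by \eqref{kernel3}. Using $\int_{x-h}^x|\lambda(y)|\,dy\le Ch\,e^{\gamma(x)}$ together with $\|\lambda\|_{L^1}\le C$, and dividing by $h$, one obtains $|D_hT_1(x)|\le Ce^{\gamma(x)}$, with $C$ independent of $\rho$ and $h$ and --- crucially --- with \emph{no} difference quotient of $\lambda$ in the bound.

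For $T_2$, translating $(y,z)\mapsto(y-x,z-x)$ turns the domain into the fixed set $\Omega_\rho$, so $T_2(x)=\iint_{\Omega_\rho}K(e^{(y-z)/\rho},1)\lambda(x+y)\lambda(x+z)\,dy\,dz$ and therefore $D_hT_2(x)=\iint_{\Omega_\rho}K(e^{(y-z)/\rho},1)\bigl[D_h\lambda(x+y)\lambda(x+z)+\lambda(x-h+y)D_h\lambda(x+z)\bigr]\,dy\,dz$. Bounding $|D_h\lambda(x+y)|\le N_he^{\gamma(x+y)}$, $|\lambda|\le 2e^{\gamma}$, and $e^{\gamma(x-h+y)}\le Ce^{\gamma(x+y)}$, this is $\le CN_h\iint_{\Omega_\rho}K(e^{(y-z)/\rho},1)e^{\gamma(x+y)}e^{\gamma(x+z)}\,dy\,dz$; undoing the translation, the last integral is exactly the quantity bounded in the proof of Proposition~\ref{prop:fixedpoint} in the estimate of $R_{3,\rho}$ (see \eqref{fp7}--\eqref{fp12}), hence $\le C\rho\,e^{\gamma(x)}$, so $|D_hT_2(x)|\le C\rho\,N_he^{\gamma(x)}$. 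Combining the two estimates, $(1+\rho)|D_h\lambda(x)|\le(C+C\rho N_h)e^{\gamma(x)}$, so $(1+\rho)N_h\le C+C\rho N_h$; choosing $\rho_2$ small enough (in addition to $\rho_2\le\rho_1$ and to whatever smallness is needed for the $\Omega_\rho$-estimates imported above) that $C\rho<\tfrac12$ yields $N_h\le 2C$, uniformly in $h\in(0,\rho)$ and $\rho\in(0,\rho_2)$, which is the claim.

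The one real obstacle is the apparent circularity, $D_h\lambda$ occurring on both sides of the difference-quotient identity. It is resolved by two points: (i) in $D_hT_1$ the variable $x$ enters only through the limits of integration (the kernel depends on $y-z$ only), so its difference quotient involves only $\lambda$, not $D_h\lambda$; (ii) $D_hT_2$, which does bring $D_h\lambda$ back, carries a prefactor $\rho$, reflecting that the $\Omega_\rho$-term is of lower order as $\rho\to0$ --- exactly as for $R_{3,\rho}$ --- so it can be absorbed. The remaining work is routine bookkeeping: tracking which of the two competing exponential weights dominates on each strip in $D_hT_1$, playing the pointwise bound $|\lambda|\le 2e^{\gamma}$ against the $L^1$ bound, and checking that the $\Omega_\rho$-estimates of Proposition~\ref{prop:fixedpoint} go through verbatim with $\bar\lambda_\rho+\psi$ replaced by $e^{\gamma}$.
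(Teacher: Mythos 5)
Your argument is correct, and the pointwise estimates are essentially the paper's: the same splitting of \eqref{eqtndiff} into the ``product'' part and the $\Omega_\rho$-part, the observation that differencing the first integral produces no difference quotient of $\lambda$ (the kernel depends only on $y-z$, so only the limits of integration move, giving the analogue of \eqref{diffquot2}--\eqref{diffquot3}), the product-rule splitting for the translated $\Omega_\rho$-integral, and the reuse of the $e^{\gamma(y)}e^{\gamma(z)}$-weighted estimate \eqref{fp7}--\eqref{fp12}, which indeed applies verbatim and yields the crucial factor $\rho$ (note $e^{2\gamma(x)}\le e^{\gamma(x)}$ in both regimes, so the bound is $C\rho e^{\gamma(x)}$ throughout). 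Where you genuinely depart from the paper is in how the resulting implicit inequality is closed. The paper works with the local quantity $\Phi_h(x)=\sup_{(x-\rho,x)}|D_h\lambda|$ and the integral $\int_{-\infty}^x|D_h\lambda|$, arrives at \eqref{diffquot0}, and then runs an induction over the lattice $\{l\rho\}_{l\in\Z}$, seeded by the decay of $\Phi_h$ at $-\infty$; you instead introduce the global weighted norm $N_h=\sup_x e^{-\gamma(x)}|D_h\lambda(x)|$, use the crude bound $N_h\le C/h$ only to know that $N_h$ is finite, and absorb the $C\rho N_h$ term in one step. Your closure is shorter and conceptually cleaner, and the uniformity in $h$ and $\rho$ is immediate since the absorption constant depends only on $K_0$, $\alpha$, $\beta$; the paper's iteration is more laborious but propagates purely local information and does not require packaging the a priori $h$-dependent bound into a single global norm. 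Both routes rest on the same two structural facts you isolate: the $x$-independence of the kernel in the first integral, and the $O(\rho)$ size of the $\Omega_\rho$-contribution. The only housekeeping worth making explicit is that the constant $C$ in $|D_hT_2|\le C\rho N_he^{\gamma(x)}$ must be (and is) independent of $h$ and $\rho$, and that $\rho_2$ is then fixed by $C\rho_2\le\tfrac12$ together with $\rho_2\le\rho_1$; with that said, your proof is complete.
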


\begin{proof}
We set
\begin{equation*}
\Phi_h(x) := \sup_{y\in(x-\rho,x)}|D_h\lambda(y)|\,.
\end{equation*}
The conclusion will be achieved by proving a uniform decay estimate on $\Phi_h$ using an iteration argument.
First notice that, by \eqref{cor2}--\eqref{cor3}, $\Phi_h(x) \leq \frac{C}{h}e^{\gamma(x)}$, and hence
\begin{equation} \label{diffquot-1}
\lim_{|x|\to\infty}\Phi_h(x)=0
\end{equation}
(but the convergence is not uniform with respect to $h$). We now divide the proof into two steps.

\medskip\noindent
\textit{Step 1.} We claim that for all $x\in\R$
\begin{equation} \label{diffquot0}
|D_h\lambda(x)| \leq Ce^{\gamma(x)} + C\rho\Phi_h(x)\int_{-\infty}^x|\lambda(y)|\de y + C\rho e^{\gamma(x)}\int_{-\infty}^x |D_h\lambda(y)|\de y \,.
\end{equation}
To prove \eqref{diffquot0}, we apply the difference quotient operator $D_h$ to the equation \eqref{eqtndiff}:
\begin{align} \label{diffquot1}
(1+\rho)|D_h\lambda(x)|
& \leq \frac{1}{h}\bigg|\int_{x-h}^x \int_x^\infty K(e^\frac{y-z}{\rho},1)\lambda(y)\lambda(z)\de z \de y\bigg| \nonumber\\
& \qquad + \frac{1}{h}\bigg|\int_{-\infty}^{x-h} \int_{x-h}^x K(e^\frac{y-z}{\rho},1)\lambda(y)\lambda(z)\de z \de y \bigg| \nonumber\\
& \qquad + \frac{1}{h} \iint_{(x,x)+\Omega_\rho} K(e^\frac{y-z}{\rho},1) \big| \lambda(y)\lambda(z) - \lambda(y-h)\lambda(z-h)\big| \de y\de z \,.
\end{align}
The first term on the right-hand side of \eqref{diffquot1} can be estimated by
\begin{align} \label{diffquot2}
\frac{1}{h}\bigg|\int_{x-h}^x \int_x^\infty K(e^\frac{y-z}{\rho},1)\lambda(y)\lambda(z)\de z \de y\bigg|
& \xupref{kernel3}{\leq} \frac{C}{h}\int_{-\infty}^\infty|\lambda(z)|\de z \int_{x-h}^x |\lambda(y)|\de y
\leq Ce^{\gamma(x)}\,,
\end{align}
with the last inequality following easily from the estimates \eqref{cor2} and \eqref{cor3}.
Similarly, the second term in \eqref{diffquot1} is bounded by
\begin{align} \label{diffquot3}
\frac{1}{h}\bigg| \int_{-\infty}^{x-h}\int_{x-h}^x K(e^\frac{y-z}{\rho},1)\lambda(y)\lambda(z)\de z \de y\bigg|
\leq Ce^{\gamma(x)}\,.
\end{align}
It remains to estimate the third term in \eqref{diffquot1}, which can be written as
\begin{align} \label{diffquot4}
\frac{1}{h} \iint_{(x,x)+\Omega_\rho} K(e^\frac{y-z}{\rho},1) & \big| \lambda(y)\lambda(z) - \lambda(y-h)\lambda(z-h)\big| \de y\de z \nonumber\\
& \leq \iint_{(x,x)+\Omega_\rho} K(e^\frac{y-z}{\rho},1)|D_h\lambda(y)||\lambda(z)|\de y \de z \nonumber\\
& \qquad + \iint_{(x,x)+\Omega_\rho} K(e^\frac{y-z}{\rho},1)|\lambda(y-h)||D_h\lambda(z)|\de y \de z\,.
\end{align}
We split the region of integration into two parts: $\Omega_\rho= \Omega_\rho^+\cup\Omega_\rho^-$, where
$$
\Omega_\rho^- := \bigl\{ (y,z)\in\Omega_\rho \;:\; z<-\rho\ln 2 \bigr\}\,.
$$
Then
\begin{align} \label{diffquot5}
\iint_{(x,x)+\Omega_\rho^-} &K(e^\frac{y-z}{\rho},1)|D_h\lambda(y)||\lambda(z)|\de y \de z \nonumber\\
& = \int_{-\infty}^{x-\rho\ln 2} \de z |\lambda(z)| \int_{x+\rho\ln(1-e^\frac{z-x}{\rho})}^x K(1,e^\frac{z-y}{\rho}) e^\frac{y-z}{\rho} |D_h\lambda(y)|\de y \nonumber\\
& \leq C\Phi_h(x) \int_{-\infty}^{x-\rho\ln2} \de z |\lambda(z)|\int_{x+\rho\ln(1-e^\frac{z-x}{\rho})}^x e^\frac{y-z}{\rho}\de y \nonumber\\
& = \rho C\Phi_h(x) \int_{-\infty}^{x-\rho\ln2}|\lambda(z)|\de z\,,
\end{align}
where we used in particular \eqref{kernel2} and \eqref{kernel3}.
The same integral in the region $(x,x)+\Omega_\rho^+$ can be estimated by
\begin{align} \label{diffquot6}
\iint_{(x,x)+\Omega_\rho^+} &K(e^\frac{y-z}{\rho},1)|D_h\lambda(y)||\lambda(z)|\de y \de z \nonumber\\
& = \int_{x-\rho\ln2}^{x} \de z |\lambda(z)| \int_{x+\rho\ln(1-e^\frac{z-x}{\rho})}^x K(e^\frac{y-z}{\rho},1) |D_h\lambda(y)|\de y \nonumber\\
& \leq C\int_{x-\rho\ln2}^x |\lambda(z)|\de z \int_{-\infty}^x |D_h\lambda(y)|\de y \nonumber\\
& \leq C\rho e^{\gamma(x)}\int_{-\infty}^x|D_h\lambda(y)|\de y\,,
\end{align}
where we used the continuity of $K$ in the first inequality and we estimated $|\lambda(z)|$ by $Ce^{\gamma(z)}$, thanks to \eqref{cor2}--\eqref{cor3}, in the last one.
For the second term in \eqref{diffquot4}, we have
\begin{align} \label{diffquot7}
\iint_{(x,x)+\Omega_\rho^-} &K(e^\frac{y-z}{\rho},1)|\lambda(y-h)||D_h\lambda(z)|\de y \de z \nonumber\\
& = \int_{-\infty}^{x-\rho\ln 2} \de z |D_h\lambda(z)| \int_{x+\rho\ln(1-e^\frac{z-x}{\rho})}^x K(1,e^\frac{z-y}{\rho}) e^\frac{y-z}{\rho} |\lambda(y-h)|\de y \nonumber\\
& \leq Ce^{\gamma(x)}\int_{-\infty}^{x-\rho\ln2}\de z |D_h\lambda(z)|\int_{x+\rho\ln(1-e^\frac{z-x}{\rho})}^x e^\frac{y-z}{\rho}\de y \nonumber\\
& \leq C\rho e^{\gamma(x)}\int_{-\infty}^{x-\rho\ln2}|D_h\lambda(z)|\de z\,,
\end{align}
where we again used \eqref{kernel3} and \eqref{cor2}--\eqref{cor3} in the first inequality.
Similarly,
\begin{align} \label{diffquot8}
\iint_{(x,x)+\Omega_\rho^+} &K(e^\frac{y-z}{\rho},1)|\lambda(y-h)||D_h\lambda(z)|\de y \de z \nonumber\\
& = \int_{x-\rho\ln2}^{x} \de z |D_h\lambda(z)| \int_{x+\rho\ln(1-e^\frac{z-x}{\rho})}^x K(e^\frac{y-z}{\rho},1) |\lambda(y-h)|\de y \nonumber\\
& \leq C\rho\Phi_h(x) \int_{-\infty}^x|\lambda(y-h)|\de y\,.
\end{align}
Finally, collecting \eqref{diffquot2}--\eqref{diffquot8} and inserting them in \eqref{diffquot1}, we conclude that \eqref{diffquot0} holds.

\medskip\noindent
\textit{Step 2.} We now use \eqref{diffquot0} to get the conclusion by an iteration argument.
First observe that
\begin{equation*}
\int_{-\infty}^x |D_h\lambda(y)|\de y = \sum_{n=0}^\infty\int_{x-(n+1)\rho}^{x-n\rho} |D_h\lambda(y)|\de y
\leq \sum_{n=0}^\infty \Phi_h(x-n\rho)\rho\,,
\end{equation*}
so that inserting this inequality in \eqref{diffquot0} we get
\begin{equation*}
|D_h\lambda(x)| \leq Ce^{\gamma(x)} + C\rho\Phi_h(x)\int_{-\infty}^x|\lambda(y)|\de y + C\rho^2 e^{\gamma(x)}\sum_{n=0}^\infty \Phi_h(x-n\rho)\,.
\end{equation*}
In turn, using the fact that $\sup_{y\in(x-\rho,x)}\Phi_h(y)\leq\Phi_h(x)+\Phi_h(x-\rho)$, we deduce
\begin{equation*}
\Phi_h(x) \leq Ce^{\gamma(x)} + C\rho \bigl( \Phi_h(x) + \Phi_h(x-\rho) \bigr)\int_{-\infty}^x|\lambda(y)|\de y
+ C\rho^2e^{\gamma(x)}\sum_{n=0}^\infty \Phi_h(x-n\rho)\,,
\end{equation*}
and by choosing $\rho$ sufficiently small we can absorb the terms with $\Phi_h(x)$ in the left-hand side and obtain that for every $x\in\R$
\begin{equation} \label{diffquot9}
\Phi_h(x) \leq Ce^{\gamma(x)} + C\rho\Phi_h(x-\rho)\int_{-\infty}^x |\lambda(y)|\de y + C\rho^2e^{\gamma(x)}\sum_{n=1}^\infty \Phi_h(x-n\rho)\,.
\end{equation}

We now set $A_l^h:=\Phi_h(l\rho)$ for $l\in\Z$ and $h\in(0,\rho)$. By \eqref{diffquot9} computed at $x=l\rho$ we have for some uniform constant $\Cbar$ (independent of $\rho, h$ and $l$)
\begin{equation} \label{diffquot10}
A_l^h \leq \Cbar e^{\gamma(l\rho)} + \Cbar\rho A_{l-1}^h\int_{-\infty}^{l\rho}|\lambda(y)|\de y + \Cbar\rho^2e^{\gamma(l\rho)}\sum_{k=-\infty}^{l-1}A_k^h\,,
\end{equation}
and in particular, by \eqref{cor2}, for $l<0$
\begin{equation} \label{diffquot11}
A_l^h \leq \Cbar e^{\gamma(l\rho)} + \Cbar\rho A_{l-1}^he^{\gamma(l\rho)} + \Cbar\rho^2e^{\gamma(l\rho)}\sum_{k=-\infty}^{l-1}A_k^h\,.
\end{equation}
We claim that if $\rho$ is sufficiently small then
\begin{equation} \label{diffquot12}
A_l^h \leq 2\Cbar e^{\gamma(l\rho)}
\qquad\text{for all }h\in(0,\rho), l\in\Z.
\end{equation}

We prove \eqref{diffquot12} by induction on $l$.
Notice first that $A_k^h\to0$ as $k\to-\infty$ by \eqref{diffquot-1}, and that the series $\sum_{k=-\infty}^\infty\rho A_k^h$ converges: indeed
\begin{align*}
\sum_{k=-\infty}^\infty \rho A_k^h
&= \sum_{k=-\infty}^\infty \rho\Phi_h(k\rho)
\leq \frac{C}{h} \sum_{k=-\infty}^\infty \rho e^{\gamma(k\rho)}
= \frac{C}{h}\sum_{k=-\infty}^\infty \int_{k\rho}^{(k+1)\rho}e^{\gamma(k\rho)}
\leq \frac{C}{h} \int_{-\infty}^{\infty}e^{\gamma(x)}\de x <\infty\,.
\end{align*}
Hence there exists $l_0<0$, depending on $h$ and $\rho$, such that for every $l\leq l_0$ 
\begin{equation*}
A_l^h\leq \frac12, \qquad \sum_{k=-\infty}^{l_0}\rho A_k^h \leq \frac12\,.
\end{equation*}
Using these inequalities in \eqref{diffquot11}, we immediately get the claim \eqref{diffquot12} for all $l\leq l_0$.
We now check the induction step: assuming that \eqref{diffquot12} holds for all $l\leq\bar{l}$, for some $\bar{l}\in\Z$, let us prove that \eqref{diffquot12} holds also for $\bar{l}+1$: we have
\begin{align*}
\sum_{k=-\infty}^{\bar{l}} \rho A_k^h
\leq 2\Cbar\rho\sum_{k=-\infty}^{\bar{l}} e^{\gamma(k\rho)}
\leq 2\Cbar \sum_{k=-\infty}^{\infty}\rho e^{\gamma(k\rho)} \leq C\Cbar\,,
\end{align*}
for another universal constant $C$ independent of $\rho,h$ and $l$. Then by \eqref{diffquot10}
\begin{align*}
A_{\bar{l}+1}^h
& \leq \Cbar e^{\gamma((\bar{l}+1)\rho)} + C\Cbar\rho A_{\bar{l}}^h + \Cbar\rho e^{\gamma((\bar{l}+1)\rho)}\sum_{k=-\infty}^{\bar{l}}\rho A_k^h \\
& \leq \Bigl( \Cbar + \rho C \Cbar^2  \Bigr) e^{\gamma((\bar{l}+1)\rho)}
\leq 2\Cbar e^{\gamma((\bar{l}+1)\rho)}
\end{align*}
if we choose $\rho$ small enough. This completes the proof by induction of \eqref{diffquot12}.

The conclusion of the lemma follows immediately from \eqref{diffquot12} by observing that, for every $x\in\R$, choosing $l\in\Z$ such that $(l-1)\rho\leq x\leq l\rho$ we have
\begin{equation*}
|D_h\lambda(x)| \leq \Phi_h(l\rho) = A_l^h \leq 2\Cbar e^{\gamma(l\rho)} \leq 4\Cbar e^{\gamma(x)}
\end{equation*}
for all $\rho$ sufficiently small and $h\in(0,\rho)$.
\end{proof}

In the following lemma we show that, thanks to the equation satisfied by $\lambda$, the bound on the difference quotients proved in Lemma~\ref{lem:diffquot} turns into a bound on the derivative $\lambda'$ in terms of $\lambda$ itself. This immediately yields the positivity of $\lambda$, and, with some more work, the expected decay of the solution at $\infty$.

\begin{lemma} \label{lem:continuation}
There exists $\rho_3>0$ such that, for every $\rho\in(0,\rho_3)$, the solution $\lambda$ to \eqref{fixedpoint} constructed in Corollary~\ref{cor:fixedpoint} is of class $C^1(\R)$. Moreover,
\begin{equation*}
|\lambda'(x)| \leq C \lambda(x) \qquad\text{for all }x\in\R
\end{equation*}
for some constant $C>0$ (independent of $\rho$), and in particular $\lambda(x)>0$ for all $x\in\R$.
\end{lemma}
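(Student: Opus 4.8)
The plan is to start from the reformulated equation \eqref{eqtndiff} and differentiate it formally. Since Lemma~\ref{lem:diffquot} provides a uniform bound $|D_h\lambda(x)|\leq Ce^{\gamma(x)}$ on the difference quotients for $h\in(0,\rho)$, by Arzel\`a--Ascoli (applied on compact subsets of $\R$) the family $\{D_h\lambda\}$ is precompact in $C_{\loc}(\R)$; passing to a subsequential limit $h\to0^+$ and using the continuity of $\lambda$ and the dominated convergence theorem in the integrals appearing in \eqref{eqtndiff}, one identifies the limit as the right derivative of $\lambda$. The key point is that all the integral terms on the right-hand side of \eqref{eqtndiff} are locally Lipschitz in $x$ with the difference quotients controlled exactly as in the proof of Lemma~\ref{lem:diffquot}, so differentiating under the integral sign is justified and gives an explicit formula for $\lambda'(x)$. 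Since this formula is continuous in $x$ (the integrands being continuous and dominated by integrable functions uniformly on compacta), $\lambda\in C^1(\R)$.

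Next I would derive the pointwise bound. Differentiating \eqref{eqtndiff} in $x$, the term $\int_{-\infty}^x\int_x^\infty K(e^{(y-z)/\rho},1)\lambda(y)\lambda(z)\de z\de y$ contributes boundary terms of the form $\lambda(x)\int_x^\infty K(e^{(x-z)/\rho},1)\lambda(z)\de z$ and $-\lambda(x)\int_{-\infty}^x K(e^{(y-x)/\rho},1)\lambda(y)\de y$, each manifestly bounded by $C\lambda(x)$ using \eqref{kernel3} together with \eqref{cor2}--\eqref{cor3} (the same splitting of the $z$-integral into $z<x-\rho\ln 2$ and its complement, and the change of variables via \eqref{kernel2}, as in Step~1 of Lemma~\ref{lem:diffquot}). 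For the $\Omega_\rho$-term, differentiating in $x$ produces the integral of $K(e^{(y-z)/\rho},1)(\lambda'(y)\lambda(z)+\lambda(y)\lambda'(z))$ over $(x,x)+\Omega_\rho$ plus a boundary contribution along $e^{(y-x)/\rho}+e^{(z-x)/\rho}=1$; reproducing the estimates \eqref{diffquot5}--\eqref{diffquot8} with $D_h\lambda$ replaced by $\lambda'$, one bounds this by $C\rho\bigl(\sup_{(x-\rho,x)}|\lambda'|\bigr)\int_{-\infty}^x|\lambda(y)|\de y + C\rho e^{\gamma(x)}\int_{-\infty}^x|\lambda'(y)|\de y + C\rho e^{\gamma(x)}$. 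Combining, $|\lambda'(x)|\leq Ce^{\gamma(x)} + C\rho\,\sup_{(x-\rho,x)}|\lambda'| + C\rho e^{\gamma(x)}\int_{-\infty}^x|\lambda'(y)|\de y$, which is precisely the analogue of \eqref{diffquot0}; the identical iteration argument (Step~2 of Lemma~\ref{lem:diffquot}), this time with the functions themselves rather than difference quotients, yields $|\lambda'(x)|\leq Ce^{\gamma(x)}$.

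To upgrade $|\lambda'(x)|\leq Ce^{\gamma(x)}$ to $|\lambda'(x)|\leq C\lambda(x)$, I would revisit the boundary terms obtained above more carefully: the principal part of $\lambda'(x)$ is $\lambda(x)\bigl(\int_x^\infty K(e^{(x-z)/\rho},1)\lambda(z)\de z - \int_{-\infty}^x K(e^{(y-x)/\rho},1)\lambda(y)\de y\bigr)/(1+\rho)$, and the coefficient in parentheses is bounded uniformly (using \eqref{cor1}, \eqref{kernel3}, and the continuity of $K$), while every remaining term is $O(\rho e^{\gamma(x)})$ times a bounded quantity; since $\lambda(x)\geq \frac{1}{16}e^{\gamma(x)}$ for $x<0$ by \eqref{cor2}, on the negative half-line $e^{\gamma(x)}\leq 16\lambda(x)$ immediately gives $|\lambda'(x)|\leq C\lambda(x)$ there. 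For $x\geq 0$ one does not yet know $\lambda(x)\gtrsim e^{\gamma(x)}$, so here the argument is more delicate: one observes that the inequality $|\lambda'(x)|\leq Ce^{\gamma(x)}$ combined with $\lambda(0)>0$ (which follows from the explicit lower bound at $x=0^-$ and continuity) and a Gr\"onwall-type comparison — writing the equation for $\lambda$ on $[0,\infty)$ as $\lambda'(x) = \lambda(x)\,a(x) + r(x)$ with $a$ bounded and $|r(x)|\leq C\rho e^{\gamma(x)}$ — forces $\lambda$ to stay strictly positive and comparable to $e^{\gamma(x)}$ on all of $[0,\infty)$ as long as $\rho$ is small; then $|\lambda'(x)|\leq C\lambda(x)$ follows on the whole line. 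The main obstacle is precisely this last step on $x\geq 0$: closing the loop between the a priori bound $|\lambda'|\leq Ce^{\gamma}$, the lower bound $\lambda\gtrsim e^{\gamma}$, and strict positivity requires a careful bootstrap (or continuity/connectedness) argument exploiting the smallness of $\rho$, since a naive application of Gr\"onwall only gives an upper bound and one must rule out $\lambda$ vanishing; everything else reduces to repeating, essentially verbatim, the integral estimates already developed for the difference quotients.
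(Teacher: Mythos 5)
The decisive step of your plan --- upgrading $|\lambda'(x)|\leq Ce^{\gamma(x)}$ to $|\lambda'(x)|\leq C\lambda(x)$ on $[0,\infty)$ --- has a genuine gap, and the intermediate claim you aim for is in fact false. You propose to show that $\lambda$ stays ``comparable to $e^{\gamma(x)}$'' on $[0,\infty)$ via a Gr\"onwall comparison for $\lambda'=a\lambda+r$ with $a$ bounded and $|r|\leq C\rho e^{\gamma(x)}$. But $\gamma(x)=-\beta x$ with $\beta<1$, while the solution ultimately decays like $e^{-x}$ (this is exactly what the rest of Section~\ref{sect:ex2} proves), so a lower bound $\lambda\gtrsim e^{\gamma}$ cannot hold. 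Moreover, the information ``$a$ bounded, $|r|\leq C\rho e^{-\beta x}$, $\lambda(0)>0$'' is not enough even for positivity: the model problem $\lambda'=-\lambda-\rho e^{-\beta x}$, $\lambda(0)=\tfrac1{16}$, has a solution that becomes negative at $x\sim\frac{1}{1-\beta}\log\frac1\rho$ for every $\rho>0$, because the forcing $\rho e^{-\beta x}$ is exponentially larger than the solution itself at infinity. The correct mechanism is a bootstrap on the ratio $|\lambda'|/\lambda$ rather than on comparison with $e^{\gamma}$: assuming $|\lambda'(y)|\leq A\lambda(y)$ for all $y\leq x$, one gets the Harnack-type bound $\lambda(y)\leq e^{A(x-y)}\lambda(x)$ for $y\leq x$, and since in the nonlocal term of \eqref{cont2} the $y$-variable lies within distance $O(\rho)$ of $x$, that term is bounded by $C\rho Ae^{A\rho}\lambda(x)$ --- i.e.\ small \emph{relative to} $\lambda(x)$, with a factor $\rho$ --- while the boundary terms are $\leq C\lambda(x)$. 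Choosing $A$ of the order of the ($A$-independent) constant $C$ and $\rho$ small, this yields $|\lambda'(x)|\leq\frac{A}{2}\lambda(x)$, and a continuation argument starting from $x<0$ (where \eqref{cor2} gives $\lambda\geq\frac1{16}e^{\gamma}$ and hence the hypothesis) propagates the bound, and with it positivity, to all of $\R$. Your Gr\"onwall route, which only keeps $|r|\leq C\rho e^{\gamma(x)}$, throws away precisely this self-referential smallness and cannot be closed.

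Two further, more minor points. First, Arzel\`a--Ascoli applied to $\{D_h\lambda\}$ requires equicontinuity of the difference quotients, which Lemma~\ref{lem:diffquot} does not provide (it gives only a uniform pointwise bound); the clean route is the one based on rewriting the equation as \eqref{cont1}, where the translation $\lambda(y+x)\lambda(z+x)$ over the fixed domain $\Omega_\rho$ puts all the $x$-dependence into the integrand: Lipschitz continuity gives $\lambda'$ a.e.\ with $|\lambda'|\leq Ce^{\gamma}$ (no need to redo the iteration of Lemma~\ref{lem:diffquot} for $\lambda'$, which as you set it up would anyway presuppose decay information on $\lambda'$), and differentiating \eqref{cont1} yields a formula whose right-hand side is continuous in $x$, hence $\lambda\in C^1$. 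Second, your claimed boundary contribution along the curve $e^{(y-x)/\rho}+e^{(z-x)/\rho}=1$ is spurious: after the change of variables to the fixed region $\Omega_\rho$ the derivative consists only of the interior term with $\lambda'$, because the kernel factor depends only on $y-z$ and is translation invariant; adding a curve term on top of the interior term would double count.
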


\begin{proof}
By a change of variables in the second integral in \eqref{eqtndiff}, we have that $\lambda$ solves
\begin{align} \label{cont1}
(1+\rho)\lambda(x) &=
\int_{-\infty}^x \int_{x}^\infty K(e^\frac{y-z}{\rho},1)\lambda(y)\lambda(z)\de z \de y \nonumber\\
& \qquad + \iint_{\Omega_\rho} K(e^\frac{y-z}{\rho},1)\lambda(y+x)\lambda(z+x)\de y \de z\,.
\end{align}
By Lemma~\ref{lem:diffquot} the function $\lambda$ is Lipschitz continuous, and hence differentiable almost everywhere, with $|\lambda'(x)|\leq Ce^{\gamma(x)}$ for almost every $x\in\R$.
This implies that the right-hand side in \eqref{cont1} is differentiable for every $x\in\R$, and in turn it follows that $\lambda\in C^1(\R)$ with
\begin{align} \label{cont2}
(1+\rho)\lambda'(x)
& = \lambda(x) \int_x^\infty K(e^\frac{x-z}{\rho},1)\lambda(z)\de z
- \lambda(x)\int_{-\infty}^x K(e^\frac{y-x}{\rho},1)\lambda(y)\de y \nonumber\\
& \qquad+ \iint_{(x,x)+\Omega_\rho} K(e^\frac{y-z}{\rho},1) \bigl( \lambda'(y)\lambda(z) + \lambda(y)\lambda'(z) \bigr) \de y \de z\,.
\end{align}

We now claim that we can find a constant $A>0$ such that for every $\rho$ small enough the following implication holds:
\begin{equation} \label{cont3bis}
\text{\itshape if $|\lambda'(y)|\leq A \lambda(y)$ for every $y\leq x$, for some $x\in\R$, then $|\lambda'(x)|\leq\frac{A}{2}\lambda(x)$.}
\end{equation}
In order to prove the claim, we estimate the right-hand side of \eqref{cont2} under the assumption that $|\lambda'(y)|\leq A \lambda(y)$ for every $y\leq x$. Notice that this assumption implies
\begin{equation} \label{cont4}
e^{-A(x-y)} \lambda(x) \leq \lambda(y) \leq e^{A(x-y)}\lambda(x)\qquad\text{for every }y\leq x\,.
\end{equation}
We first have, using assumption \eqref{kernel3},
\begin{align}\label{cont5}
\bigg| \lambda(x) \int_x^\infty K(e^\frac{x-z}{\rho},1)\lambda(z)\de z \bigg|
+ \bigg| \lambda(x) & \int_{-\infty}^x K(e^\frac{y-x}{\rho},1)\lambda(y)\de y \bigg| \nonumber\\
& \leq C \lambda(x)\int_{-\infty}^\infty|\lambda(z)|\de z\,.
\end{align}
To estimate the last integral in \eqref{cont2}, we split the domain $\Omega_\rho$ into the two regions where $z<x-\rho\ln2$ and $x-\rho\ln2<z<x$ respectively (see Figure~\ref{fig:domains}, right).
In the first case we have
\begin{align} \label{cont6}
\bigg| \int_{-\infty}^{x-\rho\ln 2}\de z & \int_{x+\rho\ln(1-e^\frac{z-x}{\rho})}^x K(e^\frac{y-z}{\rho},1) \bigl( \lambda'(y)\lambda(z) + \lambda(y)\lambda'(z) \bigr) \de y \bigg| \nonumber\\
& \upupref{kernel2}{kernel3}{\leq} C \int_{-\infty}^{x-\rho\ln2}\de z \int_{x+\rho\ln(1-e^\frac{z-x}{\rho})}^x e^\frac{y-z}{\rho} \big| \lambda'(y)\lambda(z) + \lambda(y)\lambda'(z) \big| \de y \nonumber\\
& \leq  2A C \int_{-\infty}^{x-\rho\ln2}\de z \int_{x+\rho\ln(1-e^\frac{z-x}{\rho})}^x e^\frac{y-z}{\rho} \lambda(y)\lambda(z) \de y \nonumber\\
& \leq 2A e^{A\rho} C \lambda(x) \int_{-\infty}^{x-\rho\ln2}\de z \, \lambda(z) \int_{x+\rho\ln(1-e^\frac{z-x}{\rho})}^x e^\frac{y-z}{\rho} \de y \nonumber\\
& = 2\rho A e^{A\rho} C \lambda(x) \int_{-\infty}^{x-\rho\ln2}\lambda(z)\de z\,,
\end{align}
where in the third inequality we used the fact that, since $y\in(x-\rho\ln2,x)\subset(x-\rho,x)$, then by \eqref{cont4} $\lambda(y)\leq e^{A\rho}\lambda(x)$. Similarly, for the integral in the region $x-\rho\ln2<z<x$ we have
\begin{align} \label{cont7}
\bigg| \int_{x-\rho\ln 2}^{x}\de z & \int_{x+\rho\ln(1-e^\frac{z-x}{\rho})}^x K(e^\frac{y-z}{\rho},1) \bigl( \lambda'(y)\lambda(z) + \lambda(y)\lambda'(z) \bigr) \de y \bigg| \nonumber\\
& \leq 2A C \int_{x-\rho\ln 2}^{x}\de z \int_{x+\rho\ln(1-e^\frac{z-x}{\rho})}^x \lambda(y)\lambda(z) \de y \nonumber\\
& \leq 2A e^{A\rho} C \lambda(x) \int_{x-\rho\ln 2}^{x}\de z \int_{x+\rho\ln(1-e^\frac{z-x}{\rho})}^x \lambda(y) \de y \nonumber\\
& \leq 2\rho A e^{A\rho} C \lambda(x) \int_{-\infty}^x \lambda(y)\de y\,,
\end{align}
where the second inequality follows as before from the fact that $\lambda(z)\leq e^{A\rho}\lambda(x)$ for $z\in(x-\rho\ln 2, x)$, by \eqref{cont4}.

Collecting \eqref{cont2}, \eqref{cont5}, \eqref{cont6} and \eqref{cont7} we conclude that
\begin{equation*}
|\lambda'(x)| \leq C\bigl( 1 + \rho A e^{A\rho} \bigr) \lambda(x)\,,
\end{equation*}
for some constant $C$ independent of $A$ and $\rho$.
It is then clear that we can choose the constant $A$ such that for $\rho$ sufficiently small we have $|\lambda'(x)|\leq \frac{A}{2}\lambda(x)$, which completes the proof of the implication \eqref{cont3bis}.

Since we already know that $|\lambda'(x)| \leq C \lambda(x)$ for every $x<0$, as a consequence of the bound $|\lambda'(x)|\leq Ce^{\gamma(x)}$ and of \eqref{cor2}, the conclusion of the lemma follows now easily from \eqref{cont3bis}.
\end{proof}

We are now in position to complete the proof of Theorem~\ref{thm:exist}.
By Lemma~\ref{lem:continuation}, the only property that we still have to check is the decay $\lambda(x)\sim e^{-x}$ as $x\to\infty$.

\begin{proof}[Proof of Theorem~\ref{thm:exist}]
By setting
\begin{align} \label{thm1}
\omega(x)
&:= \int_{-\infty}^x\int_x^\infty \Bigl[ K(e^\frac{y-z}{\rho},1)-1 \Bigr] \lambda(y)\lambda(z) \de y \de z \nonumber\\
& \qquad + \int_{-\infty}^x\de z \int_{x+\rho\ln(1-e^\frac{z-x}{\rho})}^x K(e^\frac{y-z}{\rho},1) \lambda(y)\lambda(z)\de y - \rho \lambda(x)\,,
\end{align}
the equation \eqref{fixedpoint} solved by $\lambda$ can be written in the form
\begin{equation} \label{thm1bis}
\lambda(x) = \int_{-\infty}^x\lambda(y)\de y \int_x^\infty \lambda(z)\de z + \omega(x)\,.
\end{equation}
The idea is that the decay behaviour of $\lambda$ for $x\to\infty$ is determined by the first term on the right-hand side of \eqref{thm1bis}, while the effect of the perturbation term $\omega(x)$ is negligible if $\rho$  is sufficiently small. In order to prove this rigorously, we need to show that $\omega(x)$ decays faster than $e^{-x}$ as $x\to\infty$. 

By using the fact that $\int_{-\infty}^\infty \lambda(z)\de z=1$, we can write $\omega(x)$ in the following way:
\begin{align}\label{thm2}
\omega(x)
& = \iint_{C_\rho} \Bigl[ K(e^\frac{y-z}{\rho},1)-1 \Bigr] \lambda(y)\lambda(z) \de y \de z \nonumber\\
& \qquad + \int_{x-\rho\ln2}^x \de z\,\lambda(z) \int_{x+\rho\ln(1-e^\frac{z-x}{\rho})}^x \lambda(y)\de y \nonumber\\
& \qquad + \iint_{D_\rho} \Bigl[ K(e^\frac{y-z}{\rho},1)-e^\frac{y-z}{\rho} \Bigr] \lambda(y)\lambda(z) \de y \de z \nonumber\\
& \qquad + \iint_{D_\rho}e^\frac{y-z}{\rho}\lambda(y)\lambda(z)\de y \de z
- \rho \lambda(x)\int_{-\infty}^{x-\rho\ln 2}\lambda(z)\de z \nonumber\\
& \qquad - \rho \lambda(x)\int_{x-\rho\ln 2}^\infty\lambda(z)\de z\,,
\end{align}
where (see Figure~\ref{fig:domains2})
\begin{align*}
C_\rho &:= \bigl\{ (y,z)\in\R^2 \;:\; x-\rho\ln2<z<x,\, x+\rho\ln(1-e^\frac{z-x}{\rho})<y<x \bigr\} \\
&\qquad\cup \bigl\{ (y,z)\in\R^2 \;:\; y\leq x,\,  z\geq x \bigr\}\,, \\
D_\rho &:= \bigl\{ (y,z)\in\R^2 \;:\; z < x-\rho\ln2,\, x+\rho\ln(1-e^\frac{z-x}{\rho})<y<x\bigr\}\,.
\end{align*}

\begin{figure}
	\centering
	\includegraphics{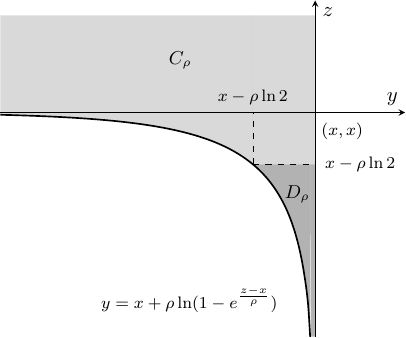}
	\caption{Representation of the domains of integration appearing in \eqref{thm2}.}
	\label{fig:domains2}
\end{figure}

Let us estimate each term in \eqref{thm2} for $x>0$. Recall that $\lambda(x)\leq 2e^{-\beta x}$ for $x>0$ by Corollary~\ref{cor:fixedpoint}, where $\beta\in(\frac12,\frac{1}{1+\rho})$ is a fixed parameter. We will assume that $\beta>\frac23$, which is a possible choice if $\rho<\frac12$.
We will often use the fact that, by Lemma~\ref{lem:continuation}, for all $x,y\in\R$
\begin{equation} \label{thm3}
\lambda(y) \leq e^{C|x-y|}\lambda(x)\,.
\end{equation}

To estimate the first integral in \eqref{thm2}, first notice that for $(y,z)\in C_\rho$ we have $y-z\leq\rho\ln2$, so that
$e^{\frac{\alpha}{\rho}(y-z)} = e^{\frac{y-z}{2}}e^{\frac{2\alpha-\rho}{2\rho}(y-z)} \leq Ce^{\frac{y-z}{2}}$,
where $\alpha$ is as in \eqref{kernel3} and we can assume $2\alpha-\rho>0$. It follows that
\begin{align} \label{thm4}
\bigg| \iint_{C_\rho} \Bigl[ K(e^\frac{y-z}{\rho},1)-1 \Bigr] \lambda(y)\lambda(z) \de y \de z \bigg|
&\xupref{kernel3}{\leq} C \iint_{C_\rho} e^{\frac{\alpha}{\rho}(y-z)}\lambda(y)\lambda(z)\de y \de z \nonumber\\
& \leq C\int_{x-\rho\ln 2}^\infty e^{-\frac{z}{2}}\lambda(z)\de z \int_{-\infty}^x e^{\frac{y}{2}} \lambda(y) \de y \nonumber\\
& \leq C\int_{x-\rho\ln 2}^\infty e^{-(\beta+\frac12)z}\de z \leq Ce^{-(\beta+\frac12)x}\,,
\end{align}
where in the third inequality we used the fact that the integral in the variable $y$ is bounded by a uniform constant $C$, due to the decay of $\lambda$.

For the second term in \eqref{thm2} we have using \eqref{thm3}
\begin{align} \label{thm5}
\int_{x-\rho\ln2}^x \de z\,\lambda(z) & \int_{x+\rho\ln(1-e^\frac{z-x}{\rho})}^x \lambda(y)\de y \nonumber\\
&\leq \int_{-\infty}^{x-\rho\ln2} \de y \,\lambda(y) \int_{x+\rho\ln(1-e^\frac{y-x}{\rho})}^x e^{C(x-z)}\lambda(x)\de z \nonumber\\
& \qquad + \int_{x-\rho\ln 2}^x\int_{x-\rho\ln2}^x e^{C(x-y)}e^{C(x-z)}\lambda(x)^2\de y \de z \nonumber\\
& \leq C \lambda(x)\int_{-\infty}^{x-\rho\ln 2}\lambda(y)(-\ln(1-e^\frac{y-x}{\rho}))\de y + C\lambda(x)^2 \nonumber\\
& \leq C \lambda(x)\int_{-\infty}^{x-\rho\ln 2}\lambda(y)e^\frac{y-x}{\rho}\de y + C e^{-2\beta x} \nonumber\\
& \leq C \lambda(x) \biggl( e^{-\frac{x}{2\rho}}\int_{-\infty}^{\frac{x}{2}}\lambda(y)\de y + \int_{\frac{x}{2}}^{x}e^{-\beta y}\de y\biggr) + C e^{-2\beta x} \nonumber\\
& \leq C e^{-(\beta+\frac{1}{2\rho})x} + C e^{-\frac32 \beta x} + C e^{-2\beta x}
\leq C e^{-\frac32\beta x}\,,
\end{align}
where in the third inequality we used $-\ln(1-t)\leq 2t$ for $t=e^\frac{y-x}{\rho}\in(0,\frac12)$.

By using \eqref{kernel2}, \eqref{kernel3} and \eqref{thm3} we bound the third term in \eqref{thm2} by
\begin{align} \label{thm6}
\bigg| \iint_{D_\rho} \Bigl[ K(e^\frac{y-z}{\rho},1)-&e^\frac{y-z}{\rho} \Bigr] \lambda(y)\lambda(z) \de y \de z \bigg|
\leq C \lambda(x) \iint_{D_\rho}e^\frac{y-z}{\rho}e^{\frac{\alpha}{\rho}(z-y)}e^{C(x-y)}\lambda(z) \de y \de z \nonumber\\
& = C\lambda(x)\int_{-\infty}^{x-\rho\ln 2} \de z\, \lambda(z)e^{\frac{\alpha-1}{\rho}z}e^{Cx} \int_{x+\rho\ln(1-e^\frac{z-x}{\rho})}^x e^{(\frac{1-\alpha}{\rho}-C)y}\de y \nonumber\\
& = \frac{\rho C\lambda(x)}{1-\alpha-C\rho} \int_{-\infty}^{x-\rho\ln 2} e^{\frac{1-\alpha}{\rho}(x-z)}\lambda(z) \bigl( 1-(1-e^\frac{z-x}{\rho})^{1-\alpha-C\rho} \bigr) \de z \nonumber\\
&\leq C\rho\lambda(x) \int_{-\infty}^{x-\rho\ln2}e^{\frac{\alpha}{\rho}(z-x)}\lambda(z)\de z \nonumber\\
&\leq C\lambda(x) \biggl( e^{-\frac{\alpha}{2\rho}x}\int_{-\infty}^{\frac{x}{2}} \lambda(z)\de z
+ \int_{\frac{x}{2}}^{x} e^{-\beta z}\de z\biggr) \nonumber\\
& \leq C\lambda(x)e^{-\frac{\beta}{2}x} \leq e^{-\frac{3}{2}\beta x}\,,
\end{align}
where in the second inequality we used $\frac{1}{1-\alpha-C\rho}(1-(1-t)^{1-\alpha-C\rho}) \leq Ct$ for $t=e^\frac{z-x}{\rho}\in(0,\frac12)$.

Observe now that for $(y,z)\in D_\rho$ we have $y\in(x-\rho,x)$ and in turn by Lemma~\ref{lem:continuation}
\begin{equation*}
|\lambda(y)-\lambda(x)| \leq \sup_{t\in(x-\rho,x)}|\lambda'(t)| (x-y)
\leq C \sup_{t\in(x-\rho,x)}\lambda(t) (x-y)
\leq C\lambda(x)(x-y)\,,
\end{equation*}
with the last inequality following from \eqref{thm3}.
Hence
\begin{align} \label{thm7}
\bigg| \iint_{D_\rho}e^\frac{y-z}{\rho}\lambda(y)\lambda(z)\de y \de z
& - \rho \lambda(x)\int_{-\infty}^{x-\rho\ln 2}\lambda(z)\de z \bigg| \nonumber\\
& = \bigg| \int_{-\infty}^{x-\rho\ln2} \de z \, \lambda(z) \int_{x+\rho\ln(1-e^\frac{z-x}{\rho})}^x e^\frac{y-z}{\rho} \bigl( \lambda(y) - \lambda(x) \bigr) \de y \bigg| \nonumber\\
& \leq C\lambda(x)\int_{-\infty}^{x-\rho\ln2}\de z \, \lambda(z) \int_{x+\rho\ln(1-e^\frac{z-x}{\rho})}^x e^\frac{y-z}{\rho}(x-y)\de y \nonumber\\
& \leq C\lambda(x) \int_{-\infty}^{x-\rho\ln2} \lambda(z) e^\frac{x-z}{\rho} \bigl( \rho\ln(1-e^\frac{z-x}{\rho}) \bigr)^2\de z \nonumber\\
& \leq C\lambda(x) \int_{-\infty}^{x-\rho\ln 2} e^\frac{z-x}{\rho}\lambda(z)\de z
\leq C e^{-\frac32\beta x}\,,
\end{align}
where the last inequality follows as in \eqref{thm5}.

Finally for the last term in \eqref{thm2} we have by \eqref{cor3}
\begin{align} \label{thm8}
\rho \lambda(x)\int_{x-\rho\ln 2}^\infty\lambda(z)\de z
\leq C\lambda(x)\int_{x-\rho\ln 2}^\infty e^{-\beta z}\de z
\leq C e^{-2\beta x}\,.
\end{align}

Eventually, collecting \eqref{thm4}--\eqref{thm8}, we conclude by \eqref{thm2} that for $x>0$
\begin{equation*}
|\omega(x)| \leq C e^{-\frac32\beta x}\,.
\end{equation*}
Since the parameter $\beta$ can be chosen such that $\frac32\beta>1$, by \eqref{thm1bis} we conclude that $\lambda(x)\sim e^{-x}$ as $x\to\infty$. This completes the proof of the theorem.
\end{proof}


\section{Nonexistence of self-similar solutions with fast decay} \label{sect:nonex}

In this section we give the proof of Theorem~\ref{thm:nonexist}, showing that, for $b$ sufficiently close to 1, \eqref{selfsim4} does not have solutions with finite mass in the class of positive measures, except for the Dirac delta at the origin. We henceforth assume that the kernel $K$ satisfies assumptions \eqref{kernel1}, \eqref{kernel2}, \eqref{kernel4} and \eqref{kernel5}.
We first give the precise definition of solution to the self-similar equation
\begin{equation} \label{selfsim4bis}
b x g(x) = \int_0^x\int_{x-y}^\infty \frac{K(y,z)}{z}g(y)g(z)\de z \de y
\end{equation}
in the sense of measures.

\begin{definition}[Weak solution] \label{def:weaksol}
We say that $g\in\meas$ is a weak solution to \eqref{selfsim4bis} if for every test function $\theta\in C([0,\infty))$ with compact support one has
\begin{equation} \label{eqtnmeas}
b \inte x\theta(x)g(x)\de x
= \inte \inte \frac{K(y,z)}{z}  g(y)g(z) \int_y^{y+z}\theta(x)\de x \de y \de z \,.
\end{equation}
For $b\geq1$, let
\begin{equation*}
\mathcal{S}_b := \Bigl\{ g\in\meas \,:\, g\text{ is a weak solution to \eqref{selfsim4bis}, } \int_{[0,\infty)}g(x)\de x =1,\, g\neq\delta_0 \Bigr\}
\end{equation*}
be the class of weak solutions to \eqref{selfsim4bis} with unit mass which are not concentrated at the origin.
\end{definition}

The proof of Theorem~\ref{thm:nonexist} amounts to show that $\mathcal{S}_b=\emptyset$ for all $b$ sufficiently close to 1.

\begin{remark} \label{rm:scaling}
Equation \eqref{eqtnmeas} has the following scale invariance property: if $g\in\meas$ is a weak solution to \eqref{selfsim4bis}, then for every $\lambda>0$ the rescaled measure $g_\lambda$ defined by
\begin{equation*}
\int_{[0,\infty)} \theta(x) g_\lambda(x) \de x = \int_{[0,\infty)} \theta\Bigl(\frac{y}{\lambda}\Bigr) g(y)\de y
\qquad\text{for all }\theta\in C([0,\infty))
\end{equation*}
is also a weak solution of \eqref{selfsim4bis}, with $\inte g_\lambda(x)\de x = \inte g(x)\de x$.
\end{remark}

\begin{remark} \label{rm:delta}
A weak solution to \eqref{selfsim4bis} which is different from $\delta_0$ cannot have a part concentrated at the origin: that is, for every $g\in\mathcal{S}_b$ one has
\begin{equation*}
\int_{\{0\}}g(x)\de x =0\,.
\end{equation*}
Indeed, assume by contradiction that $g\in\mathcal{S}_b$ has the form $g=m_0\delta_0 + \tilde{g}$ for some $m_0\in(0,1)$ and $\tilde{g}\in\meas$ with $\int_{\{0\}}\tilde{g}=0$. Then by \eqref{eqtnmeas} we obtain that for every $\theta\in C_{\mathrm c}([0,\infty))$
\begin{align*}
b \inte x\theta(x)\tilde{g}(x)\de x
&= m_0\inte\tilde{g}(z)\int_0^z\theta(x)\de x \de z
+ m_0\inte y\theta(y)\tilde{g}(y)\de y \\
& \qquad + \inte \inte \frac{K(y,z)}{z}  \tilde{g}(y)\tilde{g}(z) \int_y^{y+z}\theta(x)\de x \de y \de z \,,
\end{align*}
which yields, assuming $\theta\geq0$,
\begin{equation*}
(b-m_0) \inte x\theta(x)\tilde{g}(x)\de x \geq m_0\inte \tilde{g}(z)\int_0^z \theta(x)\de x \de z\,.
\end{equation*}
Choosing now $\theta(x):= (1-\frac{x}{\delta})\chi_{(0,\delta)}(x)$ for $\delta>0$, we easily obtain from the previous inequality
\begin{align*}
(b-m_0)\int_{(0,\delta)}\tilde{g}(x)\de x \geq \frac{m_0}{2}\int_{(\delta,\infty)}\tilde{g}(z)\de z\,,
\end{align*}
which is a contradiction if we choose $\delta$ small enough.
\end{remark}


\subsection{Decay of weak solutions}
As a preliminary step for the proof of Theorem~\ref{thm:nonexist}, we investigate here the decay properties of weak solutions to \eqref{selfsim4bis}.
We start by showing that, for $g$ in the class $\mathcal{S}_b$, the mass of $g$ in an interval of the form $(x,\infty)$ decays like a power law $x^{-\gamma}$ as $x\to\infty$, where the exponent $\gamma$ can be chosen arbitrarily large, provided $b$ is sufficiently close to 1.

\begin{lemma} \label{lem:decay}
Given any fixed $\gamma>0$, there exist constants $\eta_\gamma>0$, $b_\gamma>1$ and $C_\gamma>0$ such that for every $b\in[1,b_\gamma]$, $R_0>0$ and for every $g\in\mathcal{S}_b$, if
\begin{equation} \label{mass2}
\int_{[0,R_0]} g(x)\de x >1-\eta_\gamma
\end{equation}
then for every $x>0$
\begin{equation*}
\int_{[x,\infty)} g(y)\de y \leq C_\gamma \Bigl(\frac{R_0}{x}\Bigr)^\gamma\,.
\end{equation*}
\end{lemma}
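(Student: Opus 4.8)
I would prove this as a quantitative tail estimate, derived by testing the weak formulation \eqref{eqtnmeas} against carefully chosen nonnegative, essentially compactly supported test functions and then running a bootstrap. By the scaling invariance of Remark~\ref{rm:scaling} we may assume $R_0=1$, so that the hypothesis becomes $\int_{[0,1]}g>1-\eta_\gamma$ and we must bound $N(x):=\int_{[x,\infty)}g(y)\de y$ for $x\ge 1$. The mechanism is the one of Section~\ref{sect:euristics} and Remark~\ref{rm:beta0}: when $b$ is close to $1$, coagulation of small clusters — faster than in the additive case because $\beta_0>0$ — depletes the large scales so strongly that $N$ must decay like an arbitrarily high power of $1/x$.

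The basic step produces a multiplicative ``scale‑contraction'' estimate. Testing \eqref{eqtnmeas} with a continuous compactly supported approximation of $\theta_{x,M}(s):=\frac1s\chi_{(x,Mx)}(s)$, for a large parameter $M>1$, and passing to the limit (the integrand is bounded and $g$ finite) yields the exact identity
\begin{equation*}
 b\bigl(N(x)-N(Mx)\bigr)=\iint \frac{K(y,z)}{z}\,g(y)g(z)\int_{[y,y+z]\cap(x,Mx)}\frac{\de s}{s}\,\de y\,\de z .
\end{equation*}
The domain splits into $\{y\le x<y+z\}$ (a small cluster merges to form an aggregate above $x$), $\{x<y<Mx\}$, and $\{y\ge Mx\}$ (no contribution). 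Restricting the first region to $y\le\delta x$, $z\ge Mx$ — where by homogeneity $\frac{K(y,z)}{z}=K(1,y/z)$ with $y/z\le\delta/M$ so small that $K(1,y/z)\ge 1$ by \eqref{kernel4}, and where the inner weight equals exactly $\ln M$ — and using $\int_{[0,\delta x]}g\ge 1-\eta_\gamma$ for $x\ge 1/\delta$, one bounds the right‑hand side below by $(1-\eta_\gamma)(\ln M)\,N(Mx)$ plus a nonnegative remainder, hence
\begin{equation*}
 N(Mx)\le\frac{b}{b+(1-\eta_\gamma)\ln M}\,N(x),\qquad x\ge 1/\delta .
\end{equation*}
Iterating over the scales $M^k$ gives $N(x)\lesssim x^{-\gamma_0}$ with an exponent $\gamma_0<1$ that approaches $1$ as $b\downarrow1$ and $\eta_\gamma\downarrow0$.

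To push the exponent past $1$ and up to the prescribed $\gamma$, one must keep — rather than discard — the contribution of the large‑cluster regions and estimate it using the decay already obtained. Once $N(y)\lesssim y^{-\gamma_0}$ is available, integrals involving unbounded test functions such as $s^{p-1}\chi_{(x,\infty)}(s)$ converge, and testing with these (or again with weighted indicators), controlling the region $\{y>x\}$ by the a priori bound and once more invoking $\int_{[0,\delta x]}g\ge 1-\eta_\gamma$, produces an improved contraction estimate whose effective exponent is strictly larger than $\gamma_0$, provided $b$ is taken a little closer to $1$. Repeating this improvement finitely many times reaches the target exponent; the final power‑law bound $N(x)\le C_\gamma x^{-\gamma}$ then follows by passing from dyadic (or $M$‑adic) scales to all $x\ge 1$ and undoing the rescaling. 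This is why $\eta_\gamma$, $b_\gamma$, $C_\gamma$ are allowed to depend on $\gamma$, with the admissible window $(1,b_\gamma]$ shrinking as $\gamma$ grows.

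The main obstacle is the bootstrap step: one has to show that the right‑hand side of the tested identity, decomposed into the ``small cluster'' part — which carries an effective coefficient built from $K(1,\xi)\ge 1$ that beats the drift coefficient $b\approx 1$ — and the ``large cluster'' part — where $K$ grows linearly (bounded by $(1+K_0)(y+z)$ via the Taylor‑type estimates coming from \eqref{kernel4}) and must be tamed by the current decay bound — genuinely forces a uniform quantitative gain of decay at each round, and that finitely many rounds suffice for a fixed $\gamma$; tracking how much closer to $1$ the parameter $b$ must be taken at each round is the heart of the argument. The remaining technicalities (approximating the indicator test functions by continuous compactly supported ones, justifying the limiting identities by monotone or dominated convergence, and disregarding the at most countably many atoms of $g$) are routine.
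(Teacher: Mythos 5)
Your first stage is sound and coincides, in integrated (dyadic) form, with the first pass of the paper's proof: after rescaling to $R_0=1$, testing \eqref{eqtnmeas} on the region ``small $y$, $z$ beyond the window'' with $K(1,y/z)\geq 1-\e$ gives a contraction whose best exponent, as you correctly compute, is $\frac{(1-\eta)}{b}<1$. The genuine gap is the bootstrap, i.e.\ the whole content of the lemma for $\gamma\geq 1$: you assert an ``improved contraction'' but do not produce it, and the mechanism you indicate is the wrong one. In the tested identity every collision term enters with a favourable (nonnegative) sign, so ``taming'' the large-cluster region by the bound $(1+K_0)(y+z)$ or by the a priori decay $N(y)\lesssim y^{-\gamma_0}$ cannot improve anything: upper bounds on helpful terms are useless, and an upper bound on the tail gives no pointwise lower bound on the increments $N(x-z)-N(x)$, which is what an improvement actually requires. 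Likewise, testing with $s^{p-1}\chi_{(x,\infty)}$ only yields moment information already implied by $\gamma_0<1$; nothing in your outline explains why each round gains a fixed amount of decay, and you yourself flag this as the unproven ``heart of the argument.''

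The mechanism that really pushes the exponent past $1$ (and is what the paper exploits) is the region you discard in your basic step: the tail cluster $y\approx x$ merging with a small cluster $z\leq 1$, where by homogeneity $\frac{K(y,z)}{z}\approx\frac{x}{z}$ is large. Writing $\psi(x)=\int_{[x,\infty)}g$, testing on $(x_0,\infty)$ and keeping both regions gives the weak inequality
\begin{equation*}
b\,\psi'(x)+(1-\e)^2\int_{[0,1]}\frac{g(z)}{z}\bigl(\psi(x-z)-\psi(x)\bigr)\de z\ \leq\ -(1-\e)(1-\eta)\,\frac{\psi(x)}{x}\,,
\end{equation*}
and the second term is comparable to $-x\psi'(x)$ because the degree-one homogeneity makes the small-cluster collision term scale exactly like the dilation term $bx\partial_x$. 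The improvement comes from feeding back the \emph{previous differential inequality} (not the decay bound): $\psi'\leq -c\,\psi/x$ yields $\psi(x-z)-\psi(x)\geq \frac{c z}{x}\psi(x)$, which inserted above raises the coefficient from $c$ to $c+(1-\e)^2(1-\eta)c/b$, and iterating produces the geometric sum $\sum_k\bigl((1-\e)^2(1-\eta)/b\bigr)^k$, which exceeds any prescribed $\gamma$ once $\e,\eta$ are small and $b$ is close enough to $1$. Note also that this lemma uses only $K(\xi,1)\to 1$; the sign $\beta_0>0$ in \eqref{kernel4}, which you invoke as the source of the decay, plays no role here (it matters later, in the subsolution construction of Lemma~\ref{lem:subsolution}). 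Finally, if you insist on dyadic windows rather than the differential inequality, you must also handle the clusters straddling the upper edge of the window, an issue that disappears in the infinitesimal formulation.
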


\begin{proof}
In view of Remark~\ref{rm:scaling}, in proving the lemma we can assume without loss of generality that $R_0=1$.
Let $b>1$ and $\eta>0$ to be chosen later, and let $g\in\mathcal{S}_b$ satisfy \eqref{mass2} with $R_0=1$.
We set $\psi(x):=\int_{[x,\infty)}g(y)\de y$, so that $-\psi'=g$ as measures.
We fix $\e>0$ and let $\sigma=\sigma(\e)>0$ be such that
\begin{equation} \label{dec1}
|K(\xi,1)-1|<\e \qquad\text{if }\xi<\sigma.
\end{equation}
We also choose $x_0>\frac{1}{\e}$ such that $\frac{1}{x_0-1}<\sigma$.

By testing the equation \eqref{eqtnmeas} with a function $\theta\in C_{\mathrm c}(x_0,\infty)$, $\theta\geq0$, we have using \eqref{dec1}
\begin{align*}
- b\inte x\theta(x)\psi'(x) \de x
& \geq \int_{[0,1]} \de y \int_{[x_0-1,\infty)} \de z \, K\Bigl(\frac{y}{z},1\Bigr) g(y)g(z)\int_{y}^{y+z}\theta(x)\de x \\
& \quad + \int_{[0,1]}\de z \int_{[x_0-1,\infty)}\de y \, K\Bigl(1,\frac{z}{y}\Bigl)\frac{y}{z}\,g(y)g(z)\int_{y}^{y+z}\theta(x)\de x \\
& \geq (1-\e) \int_{[0,1]}\de y \int_{[x_0-1,\infty]}\de z \, g(y)g(z)\int_{y}^{y+z}\theta(x)\de x \\
& \quad + (1-\e) \int_{[0,1]}\de z \int_{[x_0-1,\infty)}\de y \, \frac{y}{z}\,g(y)g(z)\int_{y}^{y+z}\theta(x)\de x \\
& = (1-\e) \int_{[x_0,\infty)} \de x \int_{[0,1]}\de y \, \theta(x)g(y) \int_{[x-y,\infty)}g(z)\de z \\
& \quad + (1-\e) \int_{[x_0,\infty)} \de x \int_{[0,1]} \de z \,  \frac{1}{z}\,\theta(x)g(z)\int_{[x-z,x)}yg(y)\de y \\
& \geq (1-\e) \int_{[x_0,\infty)} \de x \int_{[0,1]} \theta(x)g(y)\psi(x-y) \de y\\
& \quad + (1-\e) \int_{[x_0,\infty)} \de x \int_{[0,1]} \frac{x-1}{z}\,\theta(x)g(z) \bigl( \psi(x-z)-\psi(x) \bigr) \de z\,.
\end{align*}
Observe that $x\geq x_0\geq \frac{1}{\e}$ implies $(x-1)\geq x(1-\e)$. By using also the monotonicity of $\psi$ and \eqref{mass2} we then obtain
\begin{align*}
b\inte x\theta(x)\psi'(x) \de x &+ (1-\e)^2 \inte \de x \int_{[0,1]} \frac{x}{z}\,\theta(x)g(z) \bigl( \psi(x-z)-\psi(x) \bigr) \de z\\
& \leq -(1-\e)(1-\eta) \inte \theta(x)\psi(x)\de x\,,
\end{align*}
that is, the function $\psi$ satisfies weakly in $(x_0,\infty)$ the inequality
\begin{equation} \label{dec2}
b \psi'(x) + (1-\e)^2 \int_{[0,1]}\frac{g(z)}{z} \bigl( \psi(x-z)-\psi(x) \bigr) \de z 
\leq -(1-\e)(1-\eta)\frac{\psi(x)}{x}\,.
\end{equation}
In particular, by monotonicity of $\psi$ the second term is positive, hence
\begin{equation*}
\psi'(x) \leq -\frac{(1-\e)(1-\eta)}{b}\frac{\psi(x)}{x}
\end{equation*}
in the weak sense in $(x_0,\infty)$; then by testing this inequality with functions approximating $\chi_{(x-z,x)}$ we obtain
\begin{equation} \label{dec3}
\psi(x) \leq \psi(x-z) - \frac{(1-\e)(1-\eta)}{b} \frac{\psi(x)}{x}z
\qquad\text{for all }x>x_0+1 \text{ and }z\in[0,1].
\end{equation}
Plugging \eqref{dec3} into \eqref{dec2} and recalling \eqref{mass2} we have
\begin{equation*}
\psi'(x) \leq - \biggl( \frac{(1-\e)(1-\eta)}{b} + \frac{(1-\e)^3(1-\eta)^2}{b^2} \biggr) \frac{\psi(x)}{x}
\qquad\text{weakly in }(x_0+1,\infty).
\end{equation*}
By iterating the previous argument, we obtain for every $n\in\N$
\begin{equation} \label{dec4}
\psi'(x) \leq -\sum_{k=1}^n \frac{(1-\e)^{2k-1}(1-\eta)^k}{b^k} \frac{\psi(x)}{x}
\qquad\text{weakly in }(x_0+(n-1),\infty).
\end{equation}
Given $\gamma>0$, we can now choose $\e_\gamma>0$, $b_\gamma>1$, $\eta_\gamma>0$ and $n_\gamma\in\N$, depending only on $\gamma$, such that
\begin{equation*}
\sum_{k=1}^{n_\gamma} \frac{(1-\e_\gamma)^{2k-1}(1-\eta_\gamma)^k}{b^k} > \gamma
\qquad\text{for every }b\in[1,b_\gamma].
\end{equation*}
Setting also $x_\gamma=x_0+(n_\gamma-1)$ (notice that also the point $x_\gamma$ depends only on $\gamma$), we then have by \eqref{dec4}
\begin{equation*}
\psi'(x) \leq -\gamma \frac{\psi(x)}{x}
\qquad\text{weakly in }(x_\gamma,\infty)\,.
\end{equation*}
This yields for every $\theta\in C_{\mathrm c}(x_\gamma,\infty)$ with $\theta\geq 0$
\begin{equation*}
\int_{(x_\gamma,\infty)} \bigl( x^\gamma\psi(x) \bigr)'\theta(x)\de x \leq 0\,,
\end{equation*}
which implies $x^\gamma\psi(x)\leq (x_\gamma)^\gamma\psi(x_\gamma)\leq (x_\gamma)^\gamma$ for every $x>x_\gamma$.
Since $\psi(x)\leq1$, the inequality $\psi(x)\leq (x_\gamma/x)^\gamma$ holds actually for every $x\geq0$.
Hence the conclusion of the lemma follows by taking $C_\gamma =(x_\gamma)^\gamma$.
\end{proof}

We have a corresponding dual property for the decay of the mass of $g$ in intervals close to the origin.

\begin{lemma} \label{lem:decaybis}
Given any fixed $\gamma<1$, there exist constants $\tilde{\eta}_\gamma>0$, $\tilde{b}_\gamma>1$ and $\widetilde{C}_\gamma>0$ such that for every $b\in[1,\tilde{b}_\gamma]$, $R_0>0$ and for every $g\in\mathcal{S}_b$, if
\begin{equation*}
\int_{(R_0,\infty)} g(x)\de x >1-\tilde{\eta}_\gamma
\end{equation*}
then for every $x>0$
\begin{equation*}
\int_{(0,x)} g(y)\de y \leq \widetilde{C}_\gamma \Bigl(\frac{x}{R_0}\Bigr)^\gamma\,.
\end{equation*}
\end{lemma}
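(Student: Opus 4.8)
The plan is to follow the scheme of Lemma~\ref{lem:decay}, exploiting the duality between the behaviour of $g$ at infinity and at the origin; the present statement is actually simpler, since the relevant exponent is automatically capped at $1$ (in line with the origin asymptotics $g(x)\sim x^{-1/(1+\rho)}$, whose cumulative exponent $\rho/(1+\rho)$ tends to $1$ as $b\to1$), so no iteration step is needed. By the scale invariance of Remark~\ref{rm:scaling} we may assume $R_0=1$, and we set $\Phi(x):=\int_{(0,x)}g(y)\de y$, so that $\Phi'=g$ in the sense of measures and $0\le\Phi\le1$.

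Given $\gamma<1$, I would first fix $\e_\gamma>0$, $\tilde\eta_\gamma>0$ and $\tilde b_\gamma>1$ so small that $\frac{(1-\e_\gamma)(1-\tilde\eta_\gamma)}{b}>\gamma$ for every $b\in[1,\tilde b_\gamma]$; then, since $K(\xi,1)\to1$ as $\xi\to0$ by \eqref{kernel4}, I would pick $x_0=x_{0,\gamma}\in(0,1]$ with $|K(\xi,1)-1|<\e_\gamma$ for all $\xi<x_0$. The key step is to test the weak formulation \eqref{eqtnmeas} with an arbitrary nonnegative $\theta\in C_{\mathrm c}((0,x_0))$ and to retain, by nonnegativity of the integrand, only the contribution of the region $\{y\in(0,x_0),\,z>1\}$ of the double integral on the right-hand side. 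On this region one has $y/z<x_0$, so $K(y,z)/z=K(y/z,1)\ge1-\e_\gamma$; moreover $y+z>1\ge x_0$, hence $\int_y^{y+z}\theta=\int_y^{x_0}\theta$; and $\int_{(1,\infty)}g>1-\tilde\eta_\gamma$ by hypothesis. An application of Fubini's theorem then bounds the right-hand side from below by $(1-\e_\gamma)(1-\tilde\eta_\gamma)\int_{(0,x_0)}\theta(x)\Phi(x)\de x$, while the left-hand side equals $b\int_{(0,x_0)}x\theta(x)g(x)\de x$. This yields the differential inequality
\begin{equation*}
b\int_{(0,x_0)}x\theta(x)\,g(x)\de x\ \ge\ (1-\e_\gamma)(1-\tilde\eta_\gamma)\int_{(0,x_0)}\theta(x)\Phi(x)\de x
\end{equation*}
for every nonnegative $\theta\in C_{\mathrm c}((0,x_0))$.

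Setting $c_0:=\frac{(1-\e_\gamma)(1-\tilde\eta_\gamma)}{b}>\gamma$ and testing the above with $\theta(x)\,x^{-c_0-1}$, one checks that the function $x\mapsto x^{-c_0}\Phi(x)$ is non-decreasing on $(0,x_0)$; since $\Phi\le1$, this gives $\Phi(x)\le(x/x_0)^{c_0}\le(x/x_0)^{\gamma}$ for $x<x_0$ (using $x/x_0<1$ and $c_0>\gamma$), whereas for $x\ge x_0$ the bound $\Phi(x)\le1\le(x/x_0)^{\gamma}$ is immediate. Hence the lemma holds with $\widetilde C_\gamma:=x_0^{-\gamma}$, which depends only on $\gamma$. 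The only mildly delicate points are the choice of $x_0$ — making sure that the small-over-large ratio $y/z$ lies in the regime where $K$ is close to $1$, uniformly over the bulk of the mass, which is why one integrates $z$ over $(1,\infty)$ rather than over $(x_0,\infty)$ — and the rigorous passage, in the sense of measures, from the tested inequality to the monotonicity of $x^{-c_0}\Phi$; both are handled exactly as in Lemma~\ref{lem:decay}.
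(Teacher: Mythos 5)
Your proposal is correct and follows essentially the same route as the paper's proof: normalize $R_0=1$ by scale invariance, test the weak formulation with nonnegative $\theta$ supported near the origin, keep only the region $y<x_0$, $z>1$ where $K(y,z)/z=K(y/z,1)\geq 1-\e_\gamma$, apply Fubini to get the weak inequality $x\psi'(x)\geq c_0\psi(x)$ on $(0,x_0)$, and conclude via monotonicity of $x^{-c_0}\psi$ with $\widetilde C_\gamma=x_0^{-\gamma}$. The only cosmetic difference is that you prove monotonicity for the exponent $c_0>\gamma$ and then relax to $\gamma$, while the paper works directly with $\gamma$; this changes nothing of substance.
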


\begin{proof}
The proof is similar to the one of Lemma~\ref{lem:decay}.
By Remark~\ref{rm:scaling} we assume $R_0=1$ without loss of generality.
We fix $\eta>0$, $b>1$ and $\e>0$, to be chosen later, and let $\sigma=\sigma(\e)>0$ be such that $|K(\xi,1)-1|<\e$ if $\xi<\sigma$.
We also fix $x_0\leq \sigma$.
Set $\psi(x):=\int_{[0,x)}g(y)\de y$, and recall that by Remark~\ref{rm:delta} we have $\psi(0)=0$. By testing the equation \eqref{eqtnmeas} with a function $\theta\in C_{\mathrm c}([0,x_0))$, $\theta\geq0$, we have
\begin{align*}
b\inte x\theta(x)\psi'(x) \de x
& \geq \int_{[0,x_0)} \de y \int_{[1,\infty)} \de z \, K\Bigl(\frac{y}{z},1\Bigr) g(y)g(z)\int_{y}^{y+z}\theta(x)\de x \\
& \geq (1-\e) \int_{[0,x_0)}\de y \int_{[1,\infty]}\de z \, g(y)g(z)\int_{y}^{y+z}\theta(x)\de x \\
& = (1-\e) \int_{[0,x_0)} \de x \int_{[1,\infty]}\de z \, \theta(x)g(z) \int_{[0,x)}g(y)\de y \\
& \geq (1-\e)(1-\eta) \int_{[0,x_0)} \theta(x)\psi(x)\de x\,.
\end{align*}
Given $\gamma<1$, we can now choose $\e_\gamma>0$, $\tilde{\eta}_\gamma>0$ and $\tilde{b}_\gamma>1$ such that for $b\in[1,\tilde{b}_\gamma]$ we have $\gamma<\frac{(1-\e_\gamma)(1-\tilde{\eta}_\gamma)}{b}$. With this choice we hence obtain
\begin{equation*}
\psi'(x) \geq \gamma \frac{\psi(x)}{x}
\qquad\text{weakly in }(0,x_0).
\end{equation*}
This yields for every $\theta\in C_{\mathrm c}((0,x_0))$ with $\theta\geq 0$
\begin{equation*}
\int_{(0,x_0)} \bigl( x^{-\gamma}\psi(x) \bigr)'\theta(x)\de x \geq0\,,
\end{equation*}
which implies $x^{-\gamma}\psi(x)\leq x_0^{-\gamma}\psi(x_0)\leq x_0^{-\gamma}$ for every $x<x_0$.
Since $\psi(x)\leq1$, the inequality $\psi(x)\leq (x/x_0)^\gamma$ holds actually for every $x\geq0$.
Hence the conclusion of the lemma follows recalling that the choice of $x_0$ depends only on $\gamma$, and taking $\widetilde{C}_\gamma =x_0^{-\gamma}$.
\end{proof}

Lemma~\ref{lem:decay} and Lemma~\ref{lem:decaybis} have some useful consequences: firstly, one obtains that the $\alpha$-moment of any measure $g\in\mathcal{S}_b$ is finite (for $b$ sufficiently close to 1), where $\alpha$ is as in assumption \eqref{kernel4}; in view of Remark~\ref{rm:scaling} we can hence normalize the solution in order to have the $\alpha$-moment of $g$ equal to 1. Furthermore, we can show that $g$ is in fact absolutely continuous with respect to the Lebesgue measure, with density in $L^1(0,\infty)$. This is the content of the following proposition.

\begin{proposition} \label{prop:moment}
Let $\alpha$ be as in assumption \eqref{kernel4} and let
\begin{equation*}
\mathcal{S}_b^\alpha := \Bigl\{ g\in\mathcal{S}_b \,:\, g\in L^1(0,\infty),\, M_\alpha:=\inte x^\alpha g(x)\de x =1 \Bigr\}\,.
\end{equation*}
There exists $\bar{b}>1$ such that, for every $b\in[1,\bar{b}]$, if $\mathcal{S}_b\neq\emptyset$ then $\mathcal{S}_b^\alpha\neq\emptyset$.
\end{proposition}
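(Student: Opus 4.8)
The plan is to show that, as soon as $b$ is close enough to $1$, \emph{every} $g\in\mathcal{S}_b$ already has a finite and strictly positive $\alpha$-moment and is absolutely continuous with respect to the Lebesgue measure. Granting this, the conclusion $\mathcal{S}_b^\alpha\neq\emptyset$ is immediate: given $g\in\mathcal{S}_b$, one applies the scaling of Remark~\ref{rm:scaling} with dilation factor $\lambda=M_\alpha^{1/\alpha}$, where $M_\alpha:=\inte x^\alpha g(x)\de x\in(0,\infty)$; the rescaled measure $g_\lambda$ is again a weak solution of unit mass, is still absolutely continuous, and has $\alpha$-moment equal to $1$, so $g_\lambda\in\mathcal{S}_b^\alpha$. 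One takes $\bar b$ to be the minimum of the finitely many thresholds on $b$ appearing in the argument.

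First I would fix $\gamma=2$ and let $\eta_2,b_2,C_2$ be the corresponding constants furnished by Lemma~\ref{lem:decay}. Since $g\neq\delta_0$, Remark~\ref{rm:delta} gives $\int_{\{0\}}g=0$, hence $\int_{(0,\infty)}g=1$, so one may choose $R_0>0$ large enough that $\int_{[0,R_0]}g>1-\eta_2$; Lemma~\ref{lem:decay} then yields $\psi(x):=\int_{[x,\infty)}g(y)\de y\leq C_2(R_0/x)^2$ for all $x>0$ and all $b\in[1,b_2]$. By Tonelli's theorem $M_1:=\inte xg(x)\de x=\int_0^\infty\psi(x)\de x\leq 1+C_2R_0^2\int_1^\infty x^{-2}\de x<\infty$, and since $x^\alpha\leq 1+x$ this forces $M_\alpha<\infty$ as well; moreover $M_\alpha>0$ because $g$ is not concentrated at the origin. (Lemma~\ref{lem:decaybis} is not needed for the moment bound itself, but it provides the companion control of the mass of $g$ near the origin.) Throughout, I would use that $K$, being continuous and homogeneous of degree one, satisfies $K(y,z)\leq C_K(y+z)$ with $C_K:=\max_{y+z=1}K(y,z)<\infty$.

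Next I would deduce absolute continuity directly from the weak formulation \eqref{eqtnmeas}. By Fubini's theorem --- legitimate because $\inte\inte K(y,z)g(y)g(z)\de y\de z\leq 2C_KM_1<\infty$ --- the right-hand side of \eqref{eqtnmeas} equals $\int_0^\infty\theta(x)h(x)\de x$, where
\[
h(x):=\inte\inte\frac{K(y,z)}{z}\,g(y)g(z)\,\chi_{(y,y+z)}(x)\,\de y\de z\ \in\ L^1([0,\infty)),\qquad h\geq 0.
\]
Thus \eqref{eqtnmeas} asserts that the two finite Borel measures $x\,g$ (that is, $g$ weighted by the factor $x$) and $\tfrac1b\,h(x)\de x$ act identically on $C_{\mathrm c}([0,\infty))$, hence coincide. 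Restricting to an interval $(\delta,\infty)$ and dividing by $x$ then shows that $g$ is absolutely continuous there, with density $h(x)/(bx)$; letting $\delta\to0$ and using once more $\int_{\{0\}}g=0$, we obtain that $g$ is absolutely continuous on $[0,\infty)$ with $\de g=\tfrac{h(x)}{bx}\de x$, and $g\in L^1(0,\infty)$ since $\int_0^\infty\tfrac{h(x)}{bx}\de x=\inte g(x)\de x=1$. Together with the first step and the rescaling described above, this proves the proposition.

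Most of this is routine measure-theoretic bookkeeping. The one point that genuinely relies on the hypotheses and on the earlier results is the finiteness of $M_1$: it is exactly what turns $h(x)\de x$ into a \emph{finite} measure and thereby unlocks the absolute-continuity bootstrap, and it is available only because $g$ puts no mass at the origin (Remark~\ref{rm:delta}), which is what allows Lemma~\ref{lem:decay} to be applied with $R_0$ chosen arbitrarily large. The only steps needing some care are the justification of the identity ``$x\,g=\tfrac1b\,h(x)\de x$'' as measures --- checking that both sides are finite Borel measures agreeing on all compactly supported continuous test functions --- and the passage from ``$h(x)\de x$ absolutely continuous'' to ``$g$ absolutely continuous on $(0,\infty)$'' after dividing by the factor $x$, which vanishes only at the endpoint $0$, where $g$ itself carries no mass.
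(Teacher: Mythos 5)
Your proof is correct, and its second half takes a genuinely different route from the paper. The moment bounds are obtained essentially as in the paper (the paper fixes $\gamma_1>1$ where you take $\gamma=2$; same use of Lemma~\ref{lem:decay}, Remark~\ref{rm:delta} and Remark~\ref{rm:scaling} to normalize $M_\alpha=1$). For the absolute continuity, however, the paper applies H\"older's inequality with $p=1/\alpha$ to the weak formulation, which forces it to control $\inte z^{\alpha-1}g(z)\de z$ near the origin and therefore to invoke Lemma~\ref{lem:decaybis} (this is why $\bar b=\min\{b_{\gamma_1},\tilde b_{\gamma_2}\}$ there); the payoff is the stronger conclusion $xg\in L^{1/\alpha}$, though only $g\in L^1$ is used later in the paper (e.g.\ in Lemma~\ref{lem:subsolution}). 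You instead identify the finite measures $x\,g(\de x)$ and $\tfrac1b h(x)\de x$ directly via Fubini and Riesz uniqueness, which needs only the finite first moment from Lemma~\ref{lem:decay} together with $K(y,z)\leq C_K(y+z)$, and the integrability of the density $h(x)/(bx)$ comes for free from the unit mass once $g(\{0\})=0$ is known. So your argument is more elementary, dispenses with Lemma~\ref{lem:decaybis} in this proposition, and yields exactly the membership $g\in L^1$ required by the definition of $\mathcal{S}_b^\alpha$, at the (harmless) cost of not producing the extra $L^p$ integrability of $xg$.
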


\begin{proof}
Fix $\gamma_1>1$ and let $\eta_{\gamma_1}>0$, $b_{\gamma_1}>1$ and $C_{\gamma_1}>0$ be the constants given by Lemma~\ref{lem:decay}. For $b\in[1,b_{\gamma_1}]$, if $g\in\mathcal{S}_b$ we have
\begin{equation*}
\int_{[0,R_0]}g(x)\de x \geq 1-\eta_{\gamma_1}
\end{equation*}
for some $R_0>0$, since the mass of $g$ is 1. Then by Lemma~\ref{lem:decay}
\begin{equation*}
\psi(x):=\int_{[x,\infty)} g(y)\de y \leq C_{\gamma_1}\Bigl(\frac{R_0}{x}\Bigr)^{\gamma_1}
\end{equation*}
for all $x>0$. Such decay implies that the $\alpha$-moment of $g$ is finite: indeed
\begin{align}\label{dec6}
\inte x^\alpha g(x)\de x &= \alpha\inte x^{\alpha-1}\psi(x)\de x \nonumber\\
& \leq \alpha\int_{[0,1]}x^{\alpha-1}\de x + \alpha C_{\gamma_1} R_0^{\gamma_1} \int_{(1,\infty)} x^{\alpha-1-\gamma_1}\de x <\infty\,.
\end{align}
Similarly, by the same argument we also have
\begin{equation} \label{dec7}
\inte xg(x)\de x <\infty\,.
\end{equation}

Next, fix $\gamma_2\in(1-\alpha,1)$ and let $\tilde{\eta}_{\gamma_2}>0$, $\tilde{b}_{\gamma_2}>1$ and $\widetilde{C}_{\gamma_2}>0$ be given by Lemma~\ref{lem:decaybis}. For $b\in[1,\tilde{b}_{\gamma_2}]$, if $g\in\mathcal{S}_b$ we have
\begin{equation*}
\int_{[R_0,\infty)}g(x)\de x \geq 1-\tilde{\eta}_{\gamma_2}
\end{equation*}
for some $R_0>0$, since the mass of $g$ is 1 and no part of $g$ is concentrated at the origin by Remark~\ref{rm:delta}. Then by Lemma~\ref{lem:decaybis}
\begin{equation*}
\psi(x):=\int_{[0,x)} g(y)\de y \leq \widetilde{C}_{\gamma_2}\Bigl(\frac{x}{R_0}\Bigr)^{\gamma_2}
\end{equation*}
for all $x>0$, which yields integrating by parts
\begin{align} \label{dec5}
\inte \frac{g(x)}{x^{1-\alpha}}\de x &= (1-\alpha)\inte \frac{\psi(x)}{x^{2-\alpha}}\de x \nonumber\\
& \leq (1-\alpha) \widetilde{C}_{\gamma_2}R_0^{-\gamma_2} \int_{[0,1]} \frac{1}{x^{2-\alpha-\gamma_2}} \de x + (1-\alpha)\int_{(1,\infty)} \frac{1}{x^{2-\alpha}}\de x <\infty\,.
\end{align}

Let now $\bar{b}:=\min\{b_{\gamma_1}, \tilde{b}_{\gamma_2}\}>1$, and let $g\in\mathcal{S}_b$, for some $b\in[1,\bar{b}]$. In view of \eqref{dec6} and of Remark~\ref{rm:scaling} we can rescale $g$ and obtain a measure (that for simplicity we still denote by $g$) whose $\alpha$-moment is equal to 1. Notice that the properties \eqref{dec6}--\eqref{dec5} are preserved by the rescaling, as well as the mass of $g$. In order to conclude that this rescaled measure is an element of $\mathcal{S}_{b}^\alpha$, we need to prove that it is absolutely continuous with respect to the Lebesgue measure, with density in $L^1(0,\infty)$.

Let $p=\frac{1}{\alpha}$ and let $p'=\frac{p}{p-1}$ be its conjugate exponent. We then have for every $\theta\in C_{\mathrm{c}}([0,\infty))$, using H\"older's inequality in the weak formulation \eqref{eqtnmeas} of the equation,
\begin{align*}
b\inte x\theta(x)g(x)\de x
& \leq \|\theta\|_{L^{p'}} \inte\inte \frac{K(y,z)}{z^{1-\alpha}}g(y)g(z)\de y \de z \\
& = \|\theta\|_{L^{p'}} \inte \int_{[0,y]} \frac{yK(1,\frac{z}{y})}{z^{1-\alpha}}g(y)g(z) \de z \de y\\
& \qquad + \|\theta\|_{L^{p'}} \inte \int_{(y,\infty)} z^\alpha K\Bigl(\frac{y}{z},1\Bigr)g(y)g(z) \de z \de y
\leq  C \|\theta\|_{L^{p'}}
\end{align*}
for some constant $C>0$, where the last estimate follows easily from the continuity of $K$ and from \eqref{dec6}--\eqref{dec5}.
This yields $xg(x)\in L^p(0,\infty)$ and, in turn, $g\in L^1(0,\infty)$, as claimed.
\end{proof}

By Proposition~\ref{prop:moment}, the proof of Theorem~\ref{thm:nonexist} amounts to show that $\mathcal{S}_b^\alpha=\emptyset$ for $b$ sufficiently close to 1.
We conclude this preliminary subsection by stating two additional technical lemmas, dealing with further decay properties of the elements of $\mathcal{S}^\alpha_b$, that will be needed in the following.

\begin{lemma} \label{lem:moment}
Let $\alpha$ be as in assumption \eqref{kernel4}. For every $b\in[1,\bar{b}]$ and for every $g\in\mathcal{S}_b^\alpha$ one has the estimates
\begin{align*}
\int_{[x,\infty)}g(z) & \de z \leq C_\alpha x^{-(\alpha+2)},
\qquad
\int_{[x,\infty)} z^\alpha g(z)\de z \leq C_\alpha x^{-2}, \\
& \inte zg(z)\de z + \inte z^{1+\alpha}g(z)\de z \leq C_\alpha\,,
\end{align*}
for a constant $C_\alpha>0$ depending only on $\alpha$.
\end{lemma}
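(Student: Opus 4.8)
The plan is to deduce all three estimates from Lemma~\ref{lem:decay} applied with the single decay exponent $\gamma=\alpha+2$, combined with the layer-cake (Fubini) representation of moments of $g$ in terms of the tail function $\psi(x):=\int_{[x,\infty)}g(z)\de z$. First I would use the normalisation $M_\alpha=\inte x^\alpha g(x)\de x=1$: by Markov's inequality, $\int_{[R,\infty)}g(z)\de z\leq R^{-\alpha}$ for every $R>0$, so choosing $R_0=R_0(\alpha)>0$ with $R_0^{-\alpha}<\eta_{\alpha+2}$ (where $\eta_{\alpha+2}$, $b_{\alpha+2}$, $C_{\alpha+2}$ are the constants provided by Lemma~\ref{lem:decay} for $\gamma=\alpha+2$, all depending only on $\alpha$) we get $\int_{[0,R_0]}g(z)\de z>1-\eta_{\alpha+2}$. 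Then, provided $\bar b\leq b_{\alpha+2}$ (see the last paragraph), Lemma~\ref{lem:decay} gives
\[
\psi(x)=\int_{[x,\infty)}g(z)\de z\leq C_{\alpha+2}\Bigl(\frac{R_0}{x}\Bigr)^{\alpha+2}=:C_\alpha\,x^{-(\alpha+2)}\qquad\text{for all }x>0,
\]
which is the first estimate, with $C_\alpha:=C_{\alpha+2}R_0^{\alpha+2}$ depending only on $\alpha$.

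For the second estimate I would write $z^\alpha=x^\alpha+\alpha\int_x^z t^{\alpha-1}\de t$ and apply Fubini (all integrals being finite thanks to $M_\alpha=1$) to obtain the identity
\[
\int_{[x,\infty)}z^\alpha g(z)\de z=x^\alpha\psi(x)+\alpha\int_x^\infty t^{\alpha-1}\psi(t)\de t\,.
\]
Inserting the bound $\psi(t)\leq C_\alpha t^{-(\alpha+2)}$ gives $x^\alpha\psi(x)\leq C_\alpha x^{-2}$ and $\alpha\int_x^\infty t^{\alpha-1}\psi(t)\de t\leq\alpha C_\alpha\int_x^\infty t^{-3}\de t=\tfrac{\alpha}{2}C_\alpha x^{-2}$, hence $\int_{[x,\infty)}z^\alpha g(z)\de z\leq C_\alpha x^{-2}$ after enlarging $C_\alpha$. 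For the third estimate I would use the analogous layer-cake identities $\inte zg(z)\de z=\int_0^\infty\psi(t)\de t$ and $\inte z^{1+\alpha}g(z)\de z=(1+\alpha)\int_0^\infty t^\alpha\psi(t)\de t$, and split each integral at $t=1$: on $(0,1)$ one bounds $\psi\leq 1$, which contributes a constant depending only on $\alpha$; on $(1,\infty)$ one uses $\psi(t)\leq C_\alpha t^{-(\alpha+2)}$, so that $\int_1^\infty\psi(t)\de t\leq C_\alpha/(\alpha+1)$ and $\int_1^\infty t^\alpha\psi(t)\de t\leq C_\alpha\int_1^\infty t^{-2}\de t=C_\alpha$. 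Summing the contributions yields the bound $\inte zg(z)\de z+\inte z^{1+\alpha}g(z)\de z\leq C_\alpha$.

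There is no serious obstacle here; the only point requiring care is the bookkeeping of thresholds. One must check that the constant $\bar b$ from Proposition~\ref{prop:moment} can be taken no larger than $b_{\alpha+2}$, the threshold needed to apply Lemma~\ref{lem:decay} with $\gamma=\alpha+2$; if this is not already the case, one simply further decreases $\bar b$, which affects no earlier argument since shrinking $b$ only shrinks the sets $\mathcal{S}_b$ and $\mathcal{S}_b^\alpha$. Apart from this, the proof is a routine combination of the decay lemma with Fubini's theorem.
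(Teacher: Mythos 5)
Your proposal is correct and follows essentially the same route as the paper: Markov's inequality with the normalisation $M_\alpha=1$ to verify the mass condition \eqref{mass2}, Lemma~\ref{lem:decay} with $\gamma=\alpha+2$ (after shrinking $\bar b$ to be at most $b_{\alpha+2}$, exactly as the paper does) to obtain the tail bound, and then the remaining moment estimates from that bound. The paper leaves the last step as ``follows easily''; your layer-cake computations simply make that explicit and are correct.
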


\begin{proof}
Let $\gamma=\alpha+2$ and let $\eta_\gamma>0$, $C_\gamma>0$ and $b_\gamma>1$ be given by Lemma~\ref{lem:decay}.
By taking a smaller $\bar{b}$ if necessary, we can assume $\bar{b}\leq b_\gamma$. 
If $g\in\mathcal{S}_b^\alpha$ for some $b\in[1,\bar{b}]$, then we have, by the fact that the $\alpha$-moment of $g$ is normalized to 1,
\begin{align*}
\int_{[0,R_0]}g(x)\de x
= 1 - \int_{(R_0,\infty)}g(x)\de x
\geq 1 - \frac{1}{R_0^\alpha} \inte x^\alpha g(x)\de x
\geq 1-\frac{1}{R_0^\alpha} \geq 1-\eta_\gamma
\end{align*}
if $R_0$ is sufficiently large (depending ultimately only on $\alpha$). By Lemma~\ref{lem:decay} we then have
\begin{equation*}
\int_{[x,\infty)}g(z)\de z \leq C_{\gamma}\biggl(\frac{R_0}{x}\biggr)^{\gamma} = C_\alpha x^{-(\alpha+2)}\,.
\end{equation*}
The estimates in the statement follow now easily from this bound.
\end{proof}

\begin{lemma} \label{lem:final}
Let $g\in\mathcal{S}_b^\alpha$ for some $b\in[1,\bar{b}]$. Then for every $R>0$ one has
\begin{equation} \label{final}
\int_{(R,\infty)}g(x)\de x \leq \frac{1}{R^{\alpha}}\,.
\end{equation}
Furthermore, for every $D>0$ there exists a constant $C(D)>0$, depending only on $D$, such that
\begin{equation*}
\int_{(D,\infty)}g(x) \de x \geq C(D)\,.
\end{equation*}
\end{lemma}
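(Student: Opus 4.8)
The bound \eqref{final} is elementary: since every $g\in\mathcal{S}_b^\alpha$ has $\alpha$-moment equal to $1$, the Markov inequality gives, for any $R>0$,
$\int_{(R,\infty)}g(x)\de x\le R^{-\alpha}\int_{(R,\infty)}x^\alpha g(x)\de x\le R^{-\alpha}\inte x^\alpha g(x)\de x=R^{-\alpha}$.
The substance of the lemma is the lower bound, and the plan is to prove it by contradiction and a compactness argument. Assume it fails for some $D>0$: then there exist sequences $b_n\in[1,\bar{b}]$ and $g_n\in\mathcal{S}_{b_n}^\alpha$ with $\int_{(D,\infty)}g_n(x)\de x\to0$ as $n\to\infty$.

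The first step would be to pass to a limit. By Lemma~\ref{lem:moment} the tails $\int_{[x,\infty)}g_n(z)\de z$, $\int_{[x,\infty)}z^\alpha g_n(z)\de z$ and the moments $\inte z\,g_n(z)\de z$, $\inte z^{1+\alpha}g_n(z)\de z$ are bounded uniformly in $n$, with constants depending only on $\alpha$; in particular $\{g_n\}$ is tight. Along a subsequence we then have $b_n\to b_*\in[1,\bar{b}]$ and $g_n$ converging narrowly to some $g_*\in\meas$, with $\inte g_*(x)\de x=1$ (mass is preserved by tightness) and $\inte x^\alpha g_*(x)\de x=1$ (the $\alpha$-moment is uniformly integrable along the sequence, again by Lemma~\ref{lem:moment}); consequently $g_*\neq\delta_0$, hence $g_*(\{0\})=0$ by Remark~\ref{rm:delta}, and since $\int_{(D,\infty)}g_n\to0$ the limit measure $g_*$ is supported in $[0,D]$. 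Moreover, $g_*$ is a weak solution of \eqref{selfsim4bis} with coefficient $b_*$ in the sense of Definition~\ref{def:weaksol}: for a fixed $\theta\in C_{\mathrm c}([0,\infty))$ the function $(y,z)\mapsto\frac{K(y,z)}{z}\int_y^{y+z}\theta(x)\de x$ extends to a bounded continuous function on $[0,\infty)\times[0,\infty)$, supported in a vertical strip $\{y\le A\}$ — it equals $y\theta(y)$ on $\{z=0\}$ and tends to $0$ at the origin, using \eqref{kernel1}, \eqref{kernel2} and the fact that $K(1,0)=1$, which follows from \eqref{kernel4} — so, together with the tightness of $g_n\otimes g_n$, both sides of \eqref{eqtnmeas} pass to the limit.

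The contradiction then comes from the fact that a weak solution with bounded support, unit mass and different from $\delta_0$ cannot exist. Set $L:=\sup(\supp g_*)$, which is finite since $\supp g_*\subseteq[0,D]$ and strictly positive since $g_*\neq\delta_0$ and $g_*(\{0\})=0$. I would pick $\theta\in C_{\mathrm c}((L,2L))$ with $\theta\ge0$ and $\theta>0$ on some interval $(L+\delta,L+2\delta)$, $0<\delta<L/4$. Then the left-hand side of \eqref{eqtnmeas} vanishes, because $\theta\equiv0$ on $[0,L]\supseteq\supp g_*$. On the other hand, by \eqref{kernel5} and symmetry $\frac{K(y,z)}{z}=K(1,y/z)\ge k_0>0$, while for $(y,z)\in(L-\e,L]^2$ with $\e<\delta$ one has $y+z>2L-2\delta>L+2\delta$, hence $\int_y^{y+z}\theta(x)\de x\ge\int_{L+\delta}^{L+2\delta}\theta(x)\de x=:c_\theta>0$; therefore the right-hand side of \eqref{eqtnmeas} is at least $k_0\,c_\theta\,g_*((L-\e,L])^2$, which is strictly positive because $g_*((L-\e,L])>0$ by the definition of $L$. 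This contradicts \eqref{eqtnmeas} and proves the lower bound, the resulting constant $C(D)$ being automatically uniform over $b\in[1,\bar{b}]$ and $g\in\mathcal{S}_b^\alpha$.

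I expect the main difficulty to be the passage to the limit in the quadratic term of the weak formulation, which rests on verifying that the integrand $(y,z)\mapsto\frac{K(y,z)}{z}\int_y^{y+z}\theta(x)\de x$ is bounded and continuous up to the boundary of the quadrant — in particular as $z\to0^+$, where the singular factor $\frac1z$ is compensated by $\int_y^{y+z}\theta(x)\de x\sim z\,\theta(y)$, and near the origin, where $K(y,z)\to0$ — and then combining this with the uniform tail bounds of Lemma~\ref{lem:moment} to handle the non-compactness of the integrand in the $z$ variable. Everything else is a soft consequence of tightness, narrow convergence, and the positivity \eqref{kernel5} of the kernel.
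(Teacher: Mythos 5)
Your proposal is correct and follows essentially the same route as the paper: the Markov inequality for \eqref{final}, and for the lower bound a compactness/contradiction argument in which a limit measure with unit mass, unit $\alpha$-moment (hence $\neq\delta_0$) and support in $[0,D]$ would be a compactly supported weak solution, which is ruled out by testing \eqref{eqtnmeas} with a nonnegative $\theta$ supported beyond the top of the support and using the positivity assumption \eqref{kernel5}. The only differences are cosmetic (you verify the continuity and boundedness of the quadratic-term integrand explicitly, and obtain $g_*\neq\delta_0$ via convergence of the $\alpha$-moments rather than the paper's cut-off estimate), so no further comment is needed.
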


\begin{proof}
For every $R>0$ we have
\begin{equation*}
\int_{(R,\infty)} g(x)\de x \leq \frac{1}{R^\alpha}\int_{(R,\infty)}x^\alpha g(x)\de x \leq \frac{1}{R^\alpha}\inte x^\alpha g(x)\de x =\frac{1}{R^\alpha}\,,
\end{equation*}
which is \eqref{final}.
We next prove the second claim by contradiction. Assume that for some $D>0$ there exist sequences $\{b_k\}_k\subset[1,\bar{b}]$ and $\{g_k\}_k\subset\mathcal{S}_{b_k}^\alpha$ such that $\int_{(D,\infty)}g_k(x)\de x \leq \frac{1}{k}$. Since the measures $g_k$ satisfy uniformly the tightness condition \eqref{final}, by the bound on the mass we can extract a (not relabeled) subsequence such that $b_k\to b\in[1,\bar{b}]$ and
$$
\inte\theta(x)g_k(x)\de x \to \inte \theta\de \mu
\qquad\text{for every } \theta\in C([0,\infty)), \theta\text{ bounded,}
$$
for some measure $\mu\in\meas$ with $\inte\de \mu=1$.

The condition $\inte x^\alpha g_k(x)\de x=1$ excludes that $\mu$ is concentrated at the origin: indeed, by Lemma~\ref{lem:moment} we can find $R>0$, independent of $k$, such that
$$
\int_{[0,R]}x^{\alpha}g_k(x)\de x = 1 - \int_{(R,\infty)} x^\alpha g_k(x)\de x \geq \frac12\,.
$$
If $\mu=\delta_0$, then by taking a nonnegative cut-off function $\theta\in C_{\mathrm c}([0,R+1))$ with $\theta\equiv1$ in $[0,R]$ we would get
\begin{align*}
0 = \inte x^\alpha\theta(x)\de\mu
= \lim_{k\to\infty}\inte x^\alpha\theta(x) g_k(x)\de x
\geq \int_{[0,R]} x^\alpha g_k(x)\de x \geq\frac12\,,
\end{align*}
which would be a contradiction. Hence $\mu\neq\delta_0$. By passing to the limit as $k\to\infty$ in the weak formulation \eqref{eqtnmeas} of the equation solved by $g_k$, we hence obtain that $\mu\in\mathcal{S}_b$.

On the other hand, we have $\int_{(D,\infty)}\de \mu=0$, hence $\mu$ has compact support. We claim that this is not possible, as $\mu$ is a weak solution to \eqref{selfsim4bis}. To see this, let $M:=\max\{x:x\in\supp\mu\}$, and fix $\e>0$ such that $\mu([M-\e,M])>0$. By taking a nonnegative test function $\theta\in C_{\mathrm{c}}([0,\infty))$ with $\supp\theta\cap\supp\mu=\emptyset$, $\theta\equiv1$ in $[M+\e,2M]$, we have by \eqref{eqtnmeas}
\begin{align*}
0 &= \inte\inte \frac{K(y,z)}{z}\int_y^{y+z}\theta(x)\de x \de\mu(y)\de \mu(z) \\
& \xupref{kernel5}{\geq} k_0 \int_{[M-\e,M]}\int_{[M-\e,M]}\int_{y}^{y+z}\theta(x)\de x \de\mu(y)\de\mu(z) \\
& \geq k_0 \int_{[M-\e,M]}\int_{[M-\e,M]} \int_{M+\e}^{2M-2\e}\theta(x)\de x \de\mu(y)\de\mu(z) \\
& \geq k_0(M-3\e) \bigl(\mu([M-\e,M])\bigr)^2>0\,,
\end{align*}
which is a contradiction.
\end{proof}


\subsection{The duality argument}
The core of the contradiction argument leading to Theorem~\ref{thm:nonexist} relies on the observation that, if $g$ is a weak solution to \eqref{selfsim4bis} in the sense of Definition~\ref{def:weaksol}, then for every $T>0$ and for every test function $\vphi\in C^1_{\mathrm c}([0,\infty)\times[0,T])$ we have
\begin{align} \label{evolution}
\inte g(x)&\vphi(x,T)\de x - \inte g(x)\vphi(x,0)\de x \nonumber\\
& - \int_0^T \inte \partial_t\vphi(x,t)g(x)\de x \de t
- b \int_0^T\inte x \partial_x\vphi(x,t)g(x)\de x \de t \nonumber\\
& + \int_0^T\inte\inte \frac{K(y,z)}{z}g(y)g(z)\bigl[\vphi(y+z,t)-\vphi(y,t)\bigr] \de y \de z \de t = 0\,.
\end{align}
By choosing $\vphi$ as a subsolution to the adjoint equation
\begin{equation} \label{adjoint}
\mathcal{L}_b(\vphi(x,t)):=\partial_t \vphi(x,t) + bx\partial_x\vphi(x,t) - \inte \frac{K(x,z)}{z}g(z)\bigl[\vphi(x+z,t)-\vphi(x,t)\bigr]\de z = 0\,,
\end{equation}
then
\begin{equation} \label{duality}
\inte \vphi(x,T)g(x)\de x \leq \inte \vphi(x,0)g(x)\de x\,.
\end{equation}

In the following lemma we construct an explicit subsolution $\phibar$ for the operator $\mathcal{L}_1$, for which \eqref{duality} holds with an additional remainder on the right-hand side, which can be made uniformly small for $b$ close to 1. At the initial time, the function $\phibar(x,0)$ is going to be identically equal to one in regions far away from the origin, that is for $x\in(R_0,\infty)$, so that the right-hand side of \eqref{duality} tends to 0 as $R_0\to\infty$ and $b\to1$.

On the other hand, as explained in the Introduction, the main effect of the adjoint equation \eqref{adjoint} is to transport the mass towards the origin: we show that after some positive time $T$ the function $\phibar(x,T)$ becomes uniformly positive in regions of order one, so that the left-hand side of \eqref{duality} is bounded from below (up to a uniform constant) by the integral of $g$ itself in an interval of the form $(D,\infty)$, for some uniform constant $D$. Actually, we cannot reach a region $(D,\infty)$, with $D$ independent of $R_0$, in a single step: we instead perform an iteration argument with a sequence of subsolutions, taking at each step an initial datum close to the subsolution of the previous step at the final time $T$; by this procedure, in a finite number of step we eventually arrive in a region of order one.

\begin{lemma} \label{lem:subsolution}
There exist constants $D>0$ and $M>0$, depending only on the kernel $K$, with the following property.
Let $b\in(1,\bar{b})$, where $\bar{b}$ is given by Proposition~\ref{prop:moment}, and let $g\in\mathcal{S}_b^\alpha$.
For every $R_0>D$ one has
\begin{equation*}
\int_{(D,\infty)}g(x)\de x \leq M \int_{(R_0,\infty)}g(x)\de x + (b-1)\omega(R_0)\,,
\end{equation*}
where $\omega(R_0)$ is a constant depending only on $R_0$ and on the kernel $K$.
\end{lemma}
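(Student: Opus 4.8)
The plan is to deduce the estimate from the duality inequality \eqref{duality}, applied to a carefully designed subsolution of the adjoint operator $\mathcal{L}_b$. Since \eqref{duality} was derived for compactly supported test functions, I would first record its extension to bounded $\vphi\in C^1([0,\infty)\times[0,T])$ that are constant in $x$ near infinity: one multiplies such a $\vphi$ by a spatial cut-off, applies the evolution identity \eqref{evolution}, and lets the cut-off tend to infinity, the boundary contributions being absorbed by the tail and moment bounds of Lemma~\ref{lem:moment}; this yields \eqref{duality} with the extra term $\int_0^T\inte g\,\mathcal L_b(\vphi)$ on the right-hand side. Since $g\in\mathcal S_b^\alpha$ we normalise $M_\alpha=\inte z^\alpha g=1$.

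The first real step is an expansion of the nonlocal part of $\mathcal L_b$ for $x$ large, acting on a bounded $\vphi$ which is nondecreasing in $x$ and whose first three spatial derivatives have size $\lesssim R^{-j}$ in a transition zone $x\asymp R$. Splitting the $z$-integral over $\{z<\delta x\}$, $\{\delta x\le z<x\}$, $\{z\ge x\}$, using homogeneity $K(x,z)=xK(1,z/x)$ and the sharp expansion \eqref{kernel4}, Taylor-expanding $\vphi(x+z)-\vphi(x)$ to second order, and invoking $M_\alpha=1$ together with $\inte zg+\inte z^{1+\alpha}g\le C_\alpha$ from Lemma~\ref{lem:moment}, one arrives at
\[
\mathcal L_b(\vphi)=\bigl[\partial_t\vphi+(b-1)x\,\partial_x\vphi\bigr]-\beta_0\,x^{1-\alpha}\partial_x\vphi-\tfrac{m_1}{2}\,x\,\partial_{xx}\vphi+\mathcal E[\vphi],\qquad m_1:=\inte zg,
\]
where $\mathcal E[\vphi]$ collects the $\{z\ge\delta x\}$ contribution (of size $\lesssim(\delta x)^{-(\alpha+2)}$ by Lemma~\ref{lem:moment}), the third-order Taylor remainder ($\lesssim xR^{-3}$) and the error in \eqref{kernel4} (relatively $o_\delta(1)$); each of these is $o\bigl(\beta_0 x^{1-\alpha}|\partial_x\vphi|\bigr)$ for $x\asymp R$ once $\delta$ is small and $R\ge D$ with $D=D(K)$ large. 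The decisive structural point is that the $O(x)$ drift cancels exactly between $bx\partial_x\vphi$ and the leading term of the nonlocal operator, leaving the genuine inward transport $-\beta_0 x^{1-\alpha}\partial_x\vphi$, which — since $\alpha\in(0,1)$ — dominates the outward term $(b-1)x\partial_x\vphi$ on all of $(0,R_0]$ as soon as $(b-1)R_0^\alpha<\tfrac12\beta_0$.

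Next I would build, for each scale $R\in(D,R_0]$, a subsolution $\phibar_R$ on a time interval $[0,T_R]$ adapted to the characteristics $x^\alpha+\alpha\beta_0 t=\mathrm{const}$ of the inward drift, of the form $\phibar_R(x,t)=h\bigl((x^\alpha+\alpha\beta_0(1-\theta)t-R^\alpha)/\ell_R\bigr)$, where $\theta\in(0,1)$ is a small fixed parameter, $\ell_R\asymp R^\alpha$ is the transition length dictated by the balance between transport and diffusion, $T_R\asymp\ell_R/\beta_0$ is chosen so that the positivity region moves from $(R,\infty)$ to $(\kappa R,\infty)$ for a fixed $\kappa=\kappa(K)\in(0,1)$, and $h$ is a fixed nondecreasing profile with $h\equiv0$ on $(-\infty,0]$ and $h\to1$ at $+\infty$. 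The slowdown factor $1-\theta$ gives $\partial_t\phibar_R-\beta_0 x^{1-\alpha}\partial_x\phibar_R=-\beta_0\theta x^{1-\alpha}\partial_x\phibar_R\le0$, so the expansion yields, in the transition zone,
\[
\mathcal L_b(\phibar_R)=(b-1)x\,\partial_x\phibar_R-\beta_0\theta x^{1-\alpha}\partial_x\phibar_R-\tfrac{m_1}{2}x\,\partial_{xx}\phibar_R+\mathcal E[\phibar_R]\le(b-1)x\,\partial_x\phibar_R,
\]
the last inequality being the assertion that the inward drift dominates the curvature term $\tfrac{m_1}{2}x\partial_{xx}\phibar_R$ (whence the choice of $\ell_R$ and the shape of $h$, in particular where it approaches $1$) and the error $\mathcal E$; outside the transition zone $\phibar_R\in\{0,1\}$ and $\mathcal L_b(\phibar_R)\le0$ follows directly from $\phibar_R\ge0$ and monotonicity. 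The extended \eqref{duality} then gives $\int_{(\kappa R,\infty)}g\le\int_{(R,\infty)}g+(b-1)E_R$ with $E_R:=\int_0^{T_R}\inte x\,\partial_x\phibar_R\,g\lesssim R^\alpha$. Iterating along $R_k=\kappa^kR_0$ until $R_N<D$ and summing yields $\int_{(D,\infty)}g\le M\int_{(R_0,\infty)}g+(b-1)\,\omega(R_0)$ with $\omega(R_0):=\sum_{k<N}E_{R_k}\lesssim R_0^\alpha$ depending only on $R_0$ and $K$; this settles the lemma in the regime $(b-1)R_0^\alpha<\tfrac12\beta_0$, and for the remaining $b\in(1,\bar b)$ it is trivial after enlarging $\omega(R_0)$ so that $(b-1)\omega(R_0)\ge1\ge\int_{(D,\infty)}g$.

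The heart of the matter — and the reason the construction is delicate — is the diffusive term $\tfrac{m_1}{2}x\partial_{xx}\phibar$ produced by the second-order Taylor expansion: one cannot transport a sharp front, and must instead give $\phibar_R$ a transition of width comparable to $x^\alpha$ and a genuine slow speed margin, and tune the profile $h$ near $1$ so that the inward drift beats the curvature uniformly over all scales $R\in[D,R_0]$; tracking this, together with the error $\mathcal E[\phibar_R]$ and the accumulation of the $(b-1)$-terms along the $\sim\log R_0$ iteration steps, is the main technical burden. By contrast, the exact cancellation of the $O(x)$ drift and the extension of \eqref{duality} to the non-compactly-supported $\phibar_R$ via the decay estimates of Lemma~\ref{lem:moment} are routine.
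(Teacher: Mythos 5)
Your construction of the subsolution itself follows essentially the same route as the paper: you exploit the inward drift $-\beta_0x^{1-\alpha}\partial_x$ produced by the expansion \eqref{kernel4} of the nonlocal term (the paper transports the profile $\vphi_0\bigl(\frac{z}{R}\bigr)$, $\vphi_0(u)=(1-\frac1u)^+$, at the reduced speed $\frac{\beta_0}{2}x^{1-\alpha}$ and uses the leftover drift margin, for $x\geq A$ with $A=A(K)$, to absorb the second-order Taylor term controlled via Lemma~\ref{lem:moment}; your slowdown factor $\theta$ plays the same role), you keep the $(b-1)x\partial_x\phibar$ term as an error on the right-hand side of the duality inequality exactly as in \eqref{subsol8}, and you estimate its time integral by the $\alpha$-moment. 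Up to that point the proposal is sound and parallel to the paper.

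The genuine gap is in the iteration. Your per-step claim $\int_{(\kappa R,\infty)}g\le\int_{(R,\infty)}g+(b-1)E_R$ does not follow from the duality inequality for your $\phibar_R$: at time $T_R$ the subsolution is only \emph{barely positive} on an $O(R)$-neighbourhood above the front at $\kappa R$ (its transition zone has width comparable to the scale itself, and the profile must approach $1$ gradually -- a profile reaching $1$ sharply destroys the subsolution property near its top, because there $\phibar(x+z)-\phibar(x)\le 1-\phibar(x)$ degenerates and no longer cancels the outward term $bx\partial_x\phibar$). So each step only yields $c_0\int_{(\kappa' R,\infty)}g\le\int_{(R,\infty)}g+(b-1)E_R$ with a fixed $c_0<1$, or, if you instead chain profiles across scales, a fixed multiplicative loss per step, since with geometric radii $R_k=\kappa^kR_0$ and a fixed profile the ratio $h(s/\kappa^\alpha)/h(s)$ is bounded away from $1$ near the foot. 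Over the $\sim\log R_0$ steps these losses compound to a factor growing like a power of $R_0$, which contradicts the assertion that $M$ depends only on $K$ -- and this is precisely why the paper cannot reach $(D,\infty)$ ``in one step'' and devotes Steps~2--3 of its proof to the chaining inequality $(1-\e_k)\phibar_k(x,0)\le\phibar_{k-1}(x,T_{k-1})$ (claim \eqref{subsol12}), with the \emph{harmonic} radii $R_k=R_0/(k+1)$, whose ratios tend to $1$, and losses $\e_k\sim k^{-\frac{1+\alpha}{\alpha}}$ whose infinite product converges, giving the universal constant $M=\frac2q$. Your scheme could be salvaged -- e.g.\ by overshooting each front so that $\phibar_{R_k}(\cdot,T_k)\ge 1-\frac{c}{\log R_0}$ on $(\kappa R_k,\infty)$ and tracking how this inflates $T_k$, $E_{R_k}$ and the constraints on $h$ -- but this loss bookkeeping is the heart of the lemma, not a routine detail, and as written your argument omits it.
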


\begin{proof}
Fix any $g\in\mathcal{S}_b^\alpha$ for $b\in(1,\bar{b})$. We start by constructing a suitable subsolution for the operator $\mathcal{L}_1$ corresponding to this $g$, defined in \eqref{adjoint}.

Fix a positive constant $A>0$, to be chosen later. For a given $R>A$, let $\phibar(x,t)$ be the solution to
\begin{align}
\begin{cases} \label{subsol1}
\partial_t\phibar - \frac12\beta_0x^{1-\alpha}\partial_x\phibar=0\,,\\
\phibar(x,0)=\vphi_0\bigl(\frac{x}{R}\bigr)\,,
\end{cases}
\end{align}
where the initial profile $\vphi_0$ is defined by $\vphi_0(x):=\bigl(1-\frac{1}{x}\bigr)^+$, $(\cdot)^+$ denoting the positive part. Here $\alpha\in(0,1)$ and $\beta_0>0$ are the constants appearing in assumption \eqref{kernel4}.
Notice that the characteristics of \eqref{subsol1} are the curves
\begin{equation} \label{subsol2}
x(t;z)= \bigl( z^\alpha - \Gamma t \bigr)^{\frac{1}{\alpha}}\,,
\qquad \Gamma:= \frac{\alpha\beta_0}{2}\,,
\end{equation}
so that the function $\phibar$ is explicitly given by
\begin{equation} \label{subsol3}
\phibar(x,t)= \vphi_0\Bigl(\frac{z(x,t)}{R}\Bigr)\,,
\qquad z(x,t):= \bigl( x^\alpha + \Gamma t \bigr)^{\frac{1}{\alpha}}\,.
\end{equation}
We finally let $T=T(R,A)$ be the time such that $x(T;R)=A$, which is given by
\begin{equation} \label{subsol5}
T= \frac{R^\alpha-A^\alpha}{\Gamma}\,.
\end{equation}
We now divide the rest of the proof into three steps.

\medskip\noindent
\textit{Step 1: verification that $\phibar$ is a subsolution for $\mathcal{L}_1$.} We claim that, if $A$ is sufficiently large (depending only on the kernel $K$), then for every $R>A$ the function $\phibar$ defined in \eqref{subsol3} satisfies
\begin{equation} \label{subsol6}
\mathcal{L}_1(\phibar(x,t))\leq 0 \qquad\text{for all }t\in[0,T] \text{ and }x\in[0,\infty)\setminus\{x(t;R)\},
\end{equation}
where the time $T$ is defined by \eqref{subsol5}.

It is clearly sufficient to check \eqref{subsol6} for $x>x(t;R)$.
By assumption \eqref{kernel4} we can find $\delta>0$ such that $K(1,\xi)\geq 1 +\frac34\beta_0\xi^\alpha$ for all $\xi\in[0,\delta]$. We then have, using the homogeneity of $K$ and the monotonicity of $\phibar$ with respect to $x$,
\begin{align*}
& \inte \frac{K(x,y)}{y}g(y)\bigl[\phibar(x+y,t)-\phibar(x,t)\bigr]\de y
\geq \int_{[0,\delta x]} K\Bigl(1,\frac{y}{x}\Bigr) \frac{x}{y} g(y)\bigl[\phibar(x+y,t)-\phibar(x,t)\bigr]\de y \nonumber\\
& \geq x\int_{[0,\delta x]}\frac{g(y)}{y} \bigl[\phibar(x+y,t)-\phibar(x,t)\bigr] \de y 
+\frac34\beta_0x^{1-\alpha} \int_{[0,\delta x]}\frac{g(y)}{y^{1-\alpha}} \bigl[\phibar(x+y,t)-\phibar(x,t)\bigr] \de y \nonumber\\
& \geq x\partial_x\phibar(x,t)\int_{[0,\delta x]}g(y)\de y + \frac34\beta_0x^{1-\alpha}\partial_x\phibar(x,t)\int_{[0,\delta x]}y^\alpha g(y)\de y \nonumber\\
& \qquad - \biggl( x\int_{[0,\delta x]}yg(y)\de y + \frac34\beta_0x^{1-\alpha}\int_{[0,\delta x]} y^{1+\alpha}g(y)\de y\biggr) \sup_{\xi\in[x,x+\delta x]} |\partial^2_{xx}\phibar(\xi,t)|\,.
\end{align*}
By using this estimate and \eqref{subsol1}, and recalling that both the mass and the $\alpha$-moment of $g$ are normalized to 1, we have for every $x>x(t;R)$
\begin{align*}
\mathcal{L}_1(\phibar(x,t)) &=\frac{\beta_0}{2}x^{1-\alpha}\partial_x\phibar + x\partial_x\phibar - \inte\frac{K(x,y)}{y}g(y)\bigl[\phibar(x+y,t)-\phibar(x,t)\bigr]\de y \nonumber\\
& \leq -\frac{\beta_0}{4}x^{1-\alpha}\partial_x\phibar + x\partial_x\phibar\int_{(\delta x,\infty)}g(y)\de y + \frac34\beta_0x^{1-\alpha}\partial_x\phibar\int_{(\delta x,\infty)}y^\alpha g(y)\de y \nonumber\\
& \qquad + \biggl( x\int_{[0,\delta x]}yg(y)\de y + \frac34\beta_0x^{1-\alpha}\int_{[0,\delta x]} y^{1+\alpha}g(y)\de y\biggr) \sup_{\xi\in[x,x+\delta x]} |\partial^2_{xx}\phibar(\xi,t)|\,.
\end{align*}
Using Lemma~\ref{lem:moment} we easily obtain
\begin{align*}
\mathcal{L}_1(\phibar(x,t))
\leq -\frac{\beta_0}{4}x^{1-\alpha}\partial_x\phibar + C x^{-(1+\alpha)}\partial_x\phibar + C\bigl(x+x^{1-\alpha}\bigr)\sup_{\xi\in[x,x+\delta x]} |\partial^2_{xx}\phibar(\xi,t)|
\end{align*}
for a constant $C$ depending only on $\alpha$, $\beta_0$ and $\delta$.
Notice now that $x>x(t;R)\geq A$ for $t\in[0,T]$, hence by choosing $A$ large enough (also depending only on $\alpha$, $\beta_0$ and $\delta$) we deduce
\begin{align*}
\mathcal{L}_1(\phibar(x,t))
& \leq -\frac{\beta_0}{8}x^{1-\alpha}\partial_x\phibar(x,t) + Cx\sup_{\xi\in[x,x+\delta x]} |\partial^2_{xx}\phibar(\xi,t)| \nonumber\\
& \leq -\frac{\beta_0}{8}x^{1-\alpha}\partial_x\phibar(x,t) + Cx|\partial^2_{xx}\phibar(x,t)|\,,
\end{align*}
where the last inequality follows straightforwardly from the explicit expression of $\phibar$ (possibly taking a larger constant $C$, depending on the same parameters). By \eqref{subsol3} it follows
\begin{align} \label{subsol7}
\mathcal{L}_1(\phibar(x,t)) \leq -\frac{\beta_0}{8R}x^{1-\alpha}\vphi_0'({\textstyle\frac{z}{R}})\partial_x z + \frac{Cx}{R^2}|\vphi_0''({\textstyle\frac{z}{R}})| (\partial_x z)^2 + \frac{Cx}{R}\vphi_0'({\textstyle\frac{z}{R}})|\partial^2_{xx}z|
=: - l_1 + l_2 + l_3\,,
\end{align}
where $z=z(x,t)$ is defined in \eqref{subsol3}.
We now use the explicit expressions of $\vphi_0'$, $\vphi_0''$ and
\begin{align*}
\partial_x z = \Bigl(\frac{z}{x}\Bigr)^{1-\alpha},
\qquad
\partial^2_{xx}z = (1-\alpha)\Bigl( x^{\alpha-1}z^{-\alpha}\partial_x z - x^{\alpha-2}z^{1-\alpha} \Bigr)
\end{align*}
to show that the right-hand side of \eqref{subsol7} is negative. Indeed, it is straightforward to check that
\begin{align*}
\frac{l_2}{l_1}
= Cx^{2\alpha-1}z^{-\alpha}
= \frac{Cx^{2\alpha-1}}{x^\alpha + \Gamma t}
\leq \frac{C}{x^{1-\alpha}}
\leq \frac{C}{A^{1-\alpha}}\,,
\end{align*}
and similarly
\begin{align*}
\frac{l_3}{l_1} = C x^\alpha \frac{|\partial^2_{xx}z|}{\partial_xz} 
\leq Cx^{2\alpha-1}z^{-\alpha} + \frac{C}{x^{1-\alpha}} \leq \frac{C}{A^{1-\alpha}}\,.
\end{align*}
By plugging these estimates in \eqref{subsol7} it follows that for $A$ large enough
\begin{equation*}
\mathcal{L}_1(\phibar(x,t)) \leq -\frac{l_1}{2} = -\frac{\beta_0}{16}x^{1-\alpha}\partial_x\phibar(x,t)\leq 0
\end{equation*}
for all $t\in[0,T]$ and $x>x(t;R)$. Notice in particular that, as $C$ depends only on $\alpha$, $\beta_0$ and $\delta$, the constant $A$ depends ultimately only on the kernel $K$. Hence the claim \eqref{subsol6} is proved. We remark that the previous argument strongly relies on the assumption that $\beta_0>0$.

Since $\phibar$ is a subsolution for the operator $\mathcal{L}_1$, by using it as test function in \eqref{evolution} (notice that this is possible even if $\partial_x\phibar$ is discontinuous at one point, since $g\in L^1$) we conclude that
\begin{equation} \label{subsol8}
\int_{[0,\infty)}\phibar(x,T)g(x)\de x \leq \int_{[0,\infty)}\phibar(x,0)g(x)\de x + (b-1)\int_0^T\inte x\partial_x\phibar(x,t)g(x)\de x\de t\,.
\end{equation}

\medskip\noindent
\textit{Step 2: iteration argument.}
We fix two decreasing sequences $(\delta_n)_n$, $(\e_n)_n$ of positive real numbers with the properties
\begin{equation} \label{subsol9}
\lim_{n\to\infty}\delta_n=\lim_{n\to\infty}\e_n=0,\qquad
\frac{\delta_n}{\delta_{n-1}}\geq\frac12\,,\qquad
\prod_{k=1}^\infty (1-\e_k) = q >0\,,
\end{equation}
with $\delta_0=1$, where $q$ is a uniform constant. Let $R_0>2^{\frac{1+\alpha}{\alpha}}A$, where $A$ is the constant determined in the previous step, and $R_k:=\delta_k R_0$. We select $\bar{n}$ as the first integer such that $R_{\bar{n}}\leq 2^{\frac{1+\alpha}{\alpha}}A$; notice that by the second condition in \eqref{subsol9} we have
\begin{equation} \label{subsol9bis}
R_k \geq R_{\bar{n}} \geq 2^{\frac{1}{\alpha}} A
\qquad\text{for all }k=0,1,\ldots,\bar{n}.
\end{equation}
We then consider the functions
\begin{equation} \label{subsol10}
\phibar_k(x,t):=\vphi_0\Bigl(\frac{z(x,t)}{R_k}\Bigr)\,,
\qquad k=0,1,\ldots\bar{n},
\end{equation}
where $z(x,t)$ is the map defined in \eqref{subsol3}. Each function $\phibar_k$ is the solution to \eqref{subsol1} with initial datum $\vphi_0(\frac{x}{R_k})$. By the first step of the proof, and in particular by \eqref{subsol8}, we hence obtain that for every $k=0,\ldots,\bar{n}$
\begin{equation} \label{subsol11}
\inte \phibar_k(x,T_k) g(x) \de x \leq \inte \phibar_k(x,0) g(x) \de x
+ (b-1)\int_0^{T_k}\inte x\partial_x\phibar_k(x,t)g(x)\de x\de t\,,
\end{equation}
where $T_k$ is defined by \eqref{subsol5} (with $R$ replaced by $R_k$). We now claim that the two sequences $(\delta_n)_n$, $(\e_n)_n$ can be chosen such that
\begin{equation} \label{subsol12}
(1-\e_k)\phibar_k(x,0) \leq \phibar_{k-1}(x,T_{k-1})
\end{equation}
for all $k=1,\ldots,\bar{n}$.
Assuming momentarily that the claim is proved, by combining \eqref{subsol11} and \eqref{subsol12} and iterating we get
\begin{align} \label{subsol12bis}
\inte \phibar_{\bar{n}}(x,0) & g(x)\de x
\leq \frac{1}{\prod_{k=1}^{\bar{n}}(1-\e_k)} \inte\phibar_0(x,0)g(x)\de x \nonumber \\
& \qquad\qquad + (b-1)\sum_{k=1}^{\bar{n}}\frac{1}{\prod_{j=k}^{\bar{n}}(1-\e_j)} \int_0^{T_{k-1}}\inte x\partial_x\phibar_{k-1}(x,t)g(x)\de x \de t \nonumber \\
& \leq \frac{1}{q}\int_{(R_0,\infty)} g(x)\de x  + \frac{b-1}{q} \sum_{k=1}^{\bar{n}}\int_0^{T_{k-1}}\inte x\partial_x\phibar_{k-1}(x,t)g(x)\de x \de t\,,
\end{align}
where the last inequality follows from \eqref{subsol9} and from $\phibar_0(x,0)=\vphi_0(\frac{x}{R_0})\leq\chi_{(R_0,\infty)}(x)$.
To estimate the last term in\ \eqref{subsol12bis}, we observe that the partial derivative $\partial_x\phibar_{k-1}(x,t)$ vanishes for $x<x(t;R_{k-1})$, while for $x>x(t;R_{k-1})\geq A$ we have
\begin{equation*}
\partial_x\phibar_{k-1}(x,t) = \frac{R_{k-1}x^{\alpha-1}}{(x^\alpha+\Gamma t)^{\frac{1+\alpha}{\alpha}}} \leq \frac{R_{k-1}x^{\alpha-1}}{A^{1+\alpha}}\,.
\end{equation*}
Hence the last term in \eqref{subsol12bis} can be bounded by
\begin{align*}
\frac{b-1}{q} \sum_{k=1}^{\bar{n}}\int_0^{T_{k-1}}\inte x\partial_x\phibar_{k-1}(x,t)&g(x)\de x \de t
\leq \frac{b-1}{qA^{1+\alpha}} \sum_{k=1}^{\bar{n}} R_{k-1}T_{k-1}\inte x^{\alpha}g(x)\de x \\
& \leq \frac{(b-1)R_0^{1+\alpha}}{q\Gamma A^{1+\alpha}} \sum_{k=1}^{\bar{n}} (\delta_{k-1})^{1+\alpha}\,,
\end{align*}
where in the second inequality we used the fact that the $\alpha$-moment of $g$ is normalized to 1, and the definition of $T_k$ given by \eqref{subsol5}. Going back to \eqref{subsol12bis} we obtain
\begin{align} \label{subsol12ter}
\inte \phibar_{\bar{n}}(x,0) g(x)\de x
\leq \frac{1}{q}\int_{(R_0,\infty)} g(x)\de x 
+ \frac{(b-1)R_0^{1+\alpha}}{q\Gamma A^{1+\alpha}}\sum_{k=1}^{\bar{n}} (\delta_{k-1})^{1+\alpha}\,.
\end{align}
On the left-hand side, we have $\phibar_{\bar{n}}(x,0)=\vphi_0(\frac{x}{R_{\bar{n}}})\geq\frac12$ if $x\geq 2^{\frac{2\alpha+1}{\alpha}}A$, since $R_{\bar{n}}\leq2^{\frac{1+\alpha}{\alpha}}A$ by the choice of $\bar{n}$.
Hence by \eqref{subsol12ter} we finally deduce that
\begin{equation*}
\frac12\int_{(2^{\frac{2\alpha+1}{\alpha}}A,\infty)}g(x)\de x
\leq \frac{1}{q}\int_{(R_0,\infty)}g(x)\de x + \frac{(b-1)R_0^{1+\alpha}}{q\Gamma A^{1+\alpha}}\sum_{k=1}^{\bar{n}} (\delta_{k-1})^{1+\alpha}\,.
\end{equation*}
The conclusion follows by choosing $D=2^{\frac{2\alpha+1}{\alpha}}A$, $M=\frac{2}{q}$, and $\omega(R_0)=\frac{2R_0^{1+\alpha}}{q\Gamma A^{1+\alpha}}\sum_{k=1}^{\bar{n}}(\delta_{k-1})^{1+\alpha}$ (notice in particular that the choice of $\bar{n}$ depends only on $R_0$, $A$ and $\alpha$, so that $\omega(R_0)$ depends only on $R_0$ and the properties of the kernel, as claimed).

\medskip\noindent
\textit{Step 3: proof of claim \eqref{subsol12}.}
We are then left with the proof that we can find two sequences $(\delta_k)_k$, $(\e_k)_k$ such that \eqref{subsol9} and \eqref{subsol12} are simultaneously satisfied. Recalling \eqref{subsol10} we rewrite \eqref{subsol12} as
\begin{align} \label{subsol13}
(1-\e_k)\Bigl( 1-\frac{\delta_kR_0}{x} \Bigr)^+ \leq \Bigl( 1-\frac{\delta_{k-1}R_0}{z(x,T_{k-1})} \Bigr)^+\,.
\end{align}
Using the definition \eqref{subsol3} of $z(x,t)$ and passing to the variable $y:=\frac{x}{\delta_{k-1}R_0}$ we see that \eqref{subsol13} is equivalent to
\begin{equation} \label{subsol14}
(1-\e_k)\biggl( 1-\frac{\delta_k}{\delta_{k-1}y} \biggr)^+
\leq \biggl( 1 - \frac{1}{(y^\alpha + \Delta_k)^\frac{1}{\alpha}} \biggr)^+\,,
\end{equation}
where $\Delta_k:=\frac{\Gamma T_{k-1}}{(\delta_{k-1})^\alpha R_{0}^\alpha}=1-(\frac{A}{\delta_{k-1}R_0})^\alpha$ by \eqref{subsol5}.
It is obviously sufficient to check the inequality \eqref{subsol14} for $y\geq\frac{\delta_k}{\delta_{k-1}}$.
Recalling \eqref{subsol9bis}, it is easy to see that for $y\geq\frac{\delta_k}{\delta_{k-1}}$ also the right-hand side of \eqref{subsol14} is strictly positive, so that we are left with the proof of the inequality
\begin{equation*}
\frac{1}{(y^\alpha + \Delta_k)^\frac{1}{\alpha}} - \frac{\delta_k(1-\e_k)}{\delta_{k-1}y} -\e_k \leq 0
\end{equation*}
for $y\geq\frac{\delta_k}{\delta_{k-1}}$.
Observing also that $\Delta_k\geq\frac12$ by \eqref{subsol9bis}, it is sufficient to check that
\begin{equation} \label{subsol15}
f_k(y):=\frac{1}{(y^\alpha + \frac12)^\frac{1}{\alpha}} - \frac{\delta_k(1-\e_k)}{\delta_{k-1}y} -\e_k \leq 0\,.
\end{equation}
By computing the derivative, one finds that the maximum of $f_k$ is the value
\begin{equation*}
\bar{f}_k = 2^{\frac{1}{\alpha}}(1-a_k)^{\frac{1+\alpha}{\alpha}} - \e_k\,,
\qquad
a_k := \Bigl( \frac{\delta_k}{\delta_{k-1}}(1-\e_k) \Bigr)^{\frac{\alpha}{\alpha+1}}\,.
\end{equation*}
We then choose the two sequences $\delta_k=\frac{1}{k+1}$, $\e_k := \frac{2^{\frac{1}{\alpha}}}{(k+1)^{\frac{1+\alpha}{\alpha}}}$, which clearly satisfy the conditions in \eqref{subsol9}.
It can be proved that with this choice the inequality
\begin{equation*}
(1-\e_k)^\alpha \geq \frac{\delta_k}{\delta_{k-1}}
\end{equation*}
holds for every $k\geq1$; in turn, we obtain $a_k\geq 1 - \frac{1}{k+1}$, which implies that $\bar{f}_k\leq0$ for every $k\geq1$, which is what we had to prove. This completes the proof of the claim \eqref{subsol12} and, in turn, of the lemma.
\end{proof}

We are now in position to conclude the proof of Theorem~\ref{thm:nonexist}.

\begin{proof}[Proof of Theorem~\ref{thm:nonexist}]
Assume that there exists a solution $g\in\mathcal{S}_b^\alpha$ for some $b\in(1,\bar{b})$, where $\bar{b}$ is given by Proposition~\ref{prop:moment}. Let $D>0$ be the constant given by Lemma~\ref{lem:subsolution}; then by the same lemma we have for every $R_0>D$
\begin{align*}
C(D) \leq \int_{(D,\infty)}g(x)\de x
\leq M\int_{(R_0,\infty)}g(x)\de x + (b-1)\omega(R_0)
\leq \frac{M}{R_0^\alpha} + (b-1)\omega(R_0)\,,
\end{align*}
where we used Lemma~\ref{lem:final} in the first and in the last inequalities.
We obtain a contradiction by taking $R_0$ large enough and, consequently, $b$ sufficiently close to 1; this yields $\mathcal{S}^\alpha_b=\emptyset$ and in turn, by Proposition~\ref{prop:moment}, also $\mathcal{S}_b=\emptyset$, which concludes the proof of the theorem.
\end{proof}


\bigskip
\bigskip
\noindent
{\bf Acknowledgments.}
The authors acknowledge support through the CRC 1060 \textit{The mathematics of emergent effects} at the University of Bonn that is funded through the German Science Foundation (DFG).

\bibliographystyle{siam}
\bibliography{bibliography}

\end{document}